\documentclass[11pt]{amsart}

\usepackage{amssymb, amsmath, amsthm, amsfonts, amsxtra, mathtools}
\usepackage{latexsym}
\usepackage[american]{babel}
\usepackage{geometry}

%Links
\usepackage[pdfencoding=auto, psdextra, breaklinks=true]{hyperref}
\usepackage{xcolor}
\definecolor{mylinkcolor}{rgb}{0.5,0.0,0.0}
\definecolor{myurlcolor}{rgb}{0.0,0.0,0.7}
\definecolor{mycitecolor}{rgb}{0.0,0.4,0.0}
\hypersetup{
 colorlinks,
 urlcolor=myurlcolor,
 citecolor=mycitecolor,
 linkcolor=mylinkcolor,
 breaklinks=true,
 pdfauthor={Alexandre Benoist, Jean Kieffer}
}
\usepackage{url}
\usepackage[capitalize]{cleveref}

\usepackage{cite}

%Graphics
\usepackage{tikz}
\usepackage{tikz-cd}
\usepackage{array}
\usepackage{graphicx}

\newtheorem{theorem}{Theorem}[section]

\newtheorem{lemma}[theorem]{Lemma}
\newtheorem{prop}[theorem]{Proposition}

\theoremstyle{definition}
\newtheorem{definition}[theorem]{Definition}

\newcommand{\Q}{\mathbb{Q}}
\newcommand{\F}{\mathbb{F}}
\newcommand{\Z}{\mathbb{Z}}

\newcommand{\R}{\mathbb{R}}
\newcommand{\Zhat}{\widehat{\Z}}
\renewcommand{\O}{\mathcal{O}}
\newcommand{\frakp}{\mathfrak{p}}
\newcommand{\frakl}{\mathfrak{l}}
\renewcommand{\L}{\mathcal{L}}
\newcommand{\fraka}{\mathfrak{a}}
\newcommand{\frakb}{\mathfrak{b}}
\newcommand{\rhobar}{\overline{\rho}}
\newcommand{\rhohat}{\widehat{\rho}}
\newcommand{\tp}{\intercal}
\newcommand{\sqf}{\mathrm{sqf}}
\newcommand{\nsqf}{\mathrm{nsqf}}

\DeclareMathOperator{\embed}{\hookrightarrow}
\DeclareMathOperator{\eps}{\varepsilon}
\DeclareMathOperator{\lcm}{\mathrm{lcm}}

\DeclareMathOperator{\End}{\mathrm{End}}

\DeclareMathOperator{\Gal}{\mathrm{Gal}}
\DeclareMathOperator{\GL}{\mathrm{GL}}

\DeclareMathOperator{\GSp}{\mathrm{GSp}}
\DeclareMathOperator{\Sp}{\mathrm{Sp}}
\DeclareMathOperator{\Gm}{\mathbb{G}_m}
\DeclareMathOperator{\Tr}{\mathrm{Tr}}

\newcommand{\Split}[2]{\mathcal{S}_{#1,#2}}

\DeclareMathOperator{\Prime}{\mathcal{P}}
\DeclareMathOperator{\Diag}{\mathrm{Diag}}
\DeclareMathOperator{\id}{\mathrm{id}}

\title[The asymptotic distribution of Elkies primes is Gaussian]{The asymptotic distribution of Elkies primes for reductions of abelian varieties is Gaussian}
\author{Alexandre Benoist}
\address[Alexandre Benoist]{University of Luxembourg, Department of Mathematics. \newline
ORCID: 0009-0002-3942-0961}
\email{alexandre.benoist@uni.lu}
\author{Jean Kieffer}
\address[Jean Kieffer]{Université de Lorraine, CNRS, Inria, LORIA\newline
ORCID: 0000-0002-9953-0137}
\email{jean.kieffer@loria.fr}
\date{\today}
\subjclass{14K02, 11G05, 11G10}
\keywords{Abelian varieties, Isogenies, Real multiplication, Elkies primes}

\begin{document}

\begin{abstract}
We generalize the notion of Elkies primes for elliptic curves to the setting of abelian varieties, possibly equipped with real multiplication (RM), and prove the following. Let~$A$ be such an abelian variety over a number field whose Galois representation has large image with respect to the chosen RM. Then the distribution of the number of Elkies primes (in a suitable range) for reductions of~$A$ modulo primes converges weakly to a Gaussian distribution around its expected value. This refines and generalizes results obtained by Shparlinski and Sutherland in the case of non-CM elliptic curves, and has implications for the complexity of the SEA point counting algorithm for abelian surfaces over finite fields.
\end{abstract}

\maketitle

\section{Introduction}

\subsection{Setup}

Let~$E$ be an elliptic curve over a finite field $\F_q$. We say that a prime number $\ell$ is \emph{Elkies} for $E$ if there exists an $\ell$-isogeny with domain $E$ defined over $\F_q$. This terminology stems from the Schoof--Elkies--Atkin (SEA) algorithm for determining $\#E(\F_q)$ \cite{schoofCountingPointsElliptic1995}; this algorithm is faster if $E$ has many small Elkies primes~$\ell$, as Elkies's method can then be applied to determine $\#E(\F_q)$ mod~$\ell$. In order to assess the overall complexity of the SEA algorithm, Shparlinski and Sutherland proved that there are enough Elkies primes on average, either when considering all elliptic curves over a fixed $\F_q$ \cite{shparlinski14} or when considering reductions of a fixed, non-CM elliptic curve over~$\Q$ modulo primes in a large interval \cite{shparlinski15}. For further results in a non-average setting, see \cite{shparlinskiProductSmallElkies2015}.

We may also consider Elkies primes for abelian varieties of higher dimensions. Let~$A$ be a polarized abelian variety of dimension $g$ over $\F_q$. We say that a prime $\ell$, coprime to~$q$ and the degree of the polarization, is Elkies for~$A$ if there exists an $\F_q$-rational subgroup $G\subset A[\ell]$ which is maximal isotropic for the Weil pairing; in that case, the quotient~$A/G$ is also equipped with a polarization of the same degree. More generally, if~$A$ has real multiplication (RM) by an order~$\O$ in a totally real number field~$K$ of degree~$d$, i.e.~if~$A$ is equipped with a primitive embedding $\O\embed\End_{\F_q}(A)$ such that every $x\in \O$ is invariant under the Rosati involution, we say that a prime ideal $\frakl$ of $\O$ is Elkies for $A$ if $A[\frakl]$ admits a maximal isotropic subgroup~$G$ defined over~$\F_q$ and stable under~$\O$, or in other words, if there exists an $\F_q$-rational $\mathfrak l$-isogeny from~$A$, as defined in~\cite{brooksIsogenyGraphsOrdinary2017}. This notion of Elkies primes is a suitable analogue of the classical definition in the context of the SEA algorithm on principally polarized abelian surfaces with or without RM~\cite{kiefferCountingPointsAbelian2022}.

\subsection{Main results}

In this paper, we show that the number of Elkies primes in certain ranges for reductions of a fixed abelian variety~$A$ with RM over a number field asymptotically follows a Gaussian distribution, provided that the Galois representation attached to~$A$ has a large enough adelic image.

To formulate this last condition precisely, we introduce the following notation. Let~$F$ be the field of definition of $A$ and let~$G_F$ be its absolute Galois group. If~$\ell$ is a large enough prime, the $\ell$-adic Tate module~$T_\ell(A)$ of~$A$ is a free $\O\otimes \Z_\ell$-module of rank $2h$ where $h = g/d$, endowed with an nondegenerate alternating form with values in $\O\otimes \Z_\ell$, as we review in~\cref{sec:galrep}.
If~$n$ is a sufficiently large integer, we can therefore consider the global Galois representation
\[
\rhohat_{n}: G_F \to \GSp_{2h}(\O\otimes \Zhat_{\geq n}), \quad \text{where }\ \Zhat_{\geq n} := \prod_{\ell \text{ prime, } \ell\geq n} \Z_\ell.
\]
We say that $A$ has \emph{large Galois image} if $\rhohat_{n}(G_F)$ contains~$\Sp_{2h}(\O\otimes \Zhat_{\geq n})$ for large enough~$n$. Assuming that~$\O$ is the whole endomorphism ring of~$A$ over~$\overline{\Q}$ (a necessary condition), one can sometimes guarantee that~$A$ has large Galois image, as in Serre's open image theorem in the case $d=1$ \cite{serreResumeCoursAu1985}: we review this theorem and its RM analogues in~\cref{sec:galrep}.

Our main result on the distribution of Elkies primes is then the following.

\begin{theorem}
\label{thm:main-cv}
    Assume the generalized Riemann hypothesis (GRH). Let~$\O$ be an order in a totally real number field~$K$ of degree~$d$, and let~$A$ be a polarized abelian variety of dimension~$g\geq 1$ defined over a number field~$F$ with RM by~$\O$ with large Galois image.
    
    For a real number~$L$, denote by $\Prime_K(L,2L)$ the set of prime ideals $\frakl$ of $K$ such that $N_{K/\Q}(\frakl) \in [L,2L]$, and define $\Prime_F(P,2P)$ similarly. For a prime $\frakp$ of $F$ of good reduction for~$A$ and $L\geq 1$, let $N_e(\frakp, L)$ be the number of Elkies primes $\frakl \in \mathcal{P}_K(L,2L)$ for $A_{\frakp}$. Further define $\alpha_h\in (0,1)$ by the formula
    \[
        \alpha_h = \sum\limits_{(d_1,\ldots,d_r) \in \Sigma_h} \frac{1}{2^r} \cdot \prod\limits_{i=1}^r \frac{1}{d_i} \cdot \prod\limits_{k=1}^h \frac{1}{\# \{j \; : \; d_j = k \} ! }  
    \] where $\Sigma_h$ denotes the set of unordered partitions of the integer $h = g/d$.

    Then, as $L,P\to \infty$ with $P \gg L^n$ for every positive integer $n$, the function
    \[
        \begin{matrix}
            X_{P,L}: & \Prime_F(P,2P) &\longrightarrow &\R \\
            & \frakp & \longmapsto & \displaystyle\frac{N_e(\frakp,L)- \alpha_h \# \mathcal{P}_K(L,2L)}{\sqrt{\alpha_h (1-\alpha_h)\# \mathcal{P}_K(L,2L)}}
        \end{matrix}
    \]
    converges weakly to the standard Gaussian distribution with mean value $0$ and variance $1$.
\end{theorem}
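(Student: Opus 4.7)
The plan is to use the method of moments, following and extending the strategy of Shparlinski--Sutherland. The central inputs will be (i) a description of the Elkies condition as a conjugation-invariant condition on the Frobenius element in $\GSp_{2h}(\O/\frakl)$, (ii) the large Galois image assumption, and (iii) an effective Chebotarev density theorem under GRH. Together these make the events \enquote{$\frakl$ is Elkies for $A_\frakp$} for distinct $\frakl$ behave, as $\frakp$ varies uniformly over $\Prime_F(P,2P)$, like asymptotically independent Bernoulli trials of parameter $\alpha_h$.

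First, I would show that $\frakl$ is Elkies for $A_\frakp$ if and only if $\rhohat_\frakl(\Frob_\frakp)$ lies in a certain conjugation-invariant subset $\Elk_\frakl \subset \GSp_{2h}(\O/\frakl)$, namely the set of elements admitting a Lagrangian stable subspace for the symplectic form on $(\O/\frakl)^{2h}$. Computing the density of $\Elk_\frakl$ in a fixed $\Sp$-coset of $\GSp_{2h}(\O/\frakl)$ by classifying characteristic polynomials of symplectic matrices according to how their irreducible factors pair under the reciprocity $P(X)\mapsto X^{\deg P}P(\mu/X)$, I would obtain exactly the combinatorial formula for $\alpha_h$ in the large $N(\frakl)$ limit: the partition $(d_1,\ldots,d_r)$ records the degrees of the dual pairs of irreducible factors, the term $\prod_i d_i^{-1} \prod_k (\#\{j:d_j=k\}!)^{-1}$ is the density of matrices of that factorization type (mirroring the cycle-type formula in $S_h$), and the factor $2^{-r}$ accounts for the $2^r$ Lagrangian choices per class weighted against the total.

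Next, for any finite set of primes $\frakl_1,\ldots,\frakl_k$ of $K$ with norms at least $n$, the large Galois image assumption guarantees that the image of $\rhohat_n$ in $\prod_i \GSp_{2h}(\O/\frakl_i)$ contains $\prod_i \Sp_{2h}(\O/\frakl_i)$. Applying GRH-effective Chebotarev to the corresponding finite extension of $F$, whose degree and discriminant are polynomial in $\prod_i N(\frakl_i)$, I would show that for any conjugation-invariant subset $S\subseteq \prod_i \GSp_{2h}(\O/\frakl_i)$ the proportion of $\frakp\in\Prime_F(P,2P)$ whose joint Frobenius image lies in $S$ equals $|S|/|\prod_i \GSp_{2h}(\O/\frakl_i)|$, up to an error whose main factor is $P^{-1/2}\log(P)\prod_i N(\frakl_i)^{O(h^2)}$. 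Applied with $S$ of the form $\prod_i \Elk_{\frakl_i}^{\epsilon_i}$ (where $\Elk_\frakl^0=\Elk_\frakl$ and $\Elk_\frakl^1$ is its complement), this yields asymptotic joint independence of the Elkies events across distinct primes~$\frakl_i$.

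The Gaussian convergence then follows by computing $\mathbb{E}[X_{P,L}^k]$ for each fixed $k\geq 1$ and matching the moments of $\mathcal{N}(0,1)$. Expanding the power yields a sum over $k$-tuples of primes in $\Prime_K(L,2L)$ of products of centered Bernoulli indicators $\mathbf{1}_{\text{Elkies}}(A_\frakp,\frakl_j)-\alpha_h$; by the Chebotarev step, tuples in which some prime appears with multiplicity $1$ contribute $o(1)$ after averaging, tuples of higher multiplicity reduce to powers of $\alpha_h(1-\alpha_h)$, and the dominant contribution comes from tuples in which primes are paired (multiplicity exactly $2$), producing precisely the double-factorial moments $(k-1)!!$ after dividing by $\sqrt{\alpha_h(1-\alpha_h)\#\Prime_K(L,2L)}^k$. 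The main obstacle is to make this moment computation uniform in $k$: the Chebotarev error is polynomial in $\prod_i N(\frakl_i) \leq (2L)^k$, so controlling the $k$-th moment requires $P$ to dominate a power of $L$ that grows with $k$ --- which is exactly the hypothesis $P \gg L^n$ for every $n$.
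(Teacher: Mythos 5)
Your proposal follows essentially the same route as the paper: characterize the Elkies condition as a conjugation-invariant (``split'') condition on $\rhobar_\frakl(\Frob_\frakp)$, count such matrices by classifying characteristic polynomials through the pairing $P\mapsto\widetilde{P}^{\lambda_0}$ to get density $\alpha_h$ per $\Sp$-coset, invoke effective Chebotarev under GRH and large Galois image for joint events, and run the method of moments with the $\mathcal{Q}_{2\nu,0}'$-type pairings giving $(k-1)!!$. One small imprecision: the proportion of $\frakp$ with joint Frobenius in $S$ is not $|S|/|\prod_i\GSp_{2h}(\O/\frakl_i)|$ but $|S\cap G_{\L}|/|G_{\L}|$, where $G_{\L}\subset\prod_i\GSp_{2h}(\O/\frakl_i)$ is the actual Galois image and is strictly smaller because the multipliers $\lambda_i$ are all determined by the single value $\chi_\ell(\Frob_\frakp)$; this is why the paper sums over $(\lambda_1,\ldots,\lambda_r)\in\lambda(G_{\L})$ and crucially uses that $\#\Split{2h}{\F_q}(\lambda_0)$ has the same leading term $\alpha_h q^{f(h)-1}$ uniformly in $\lambda_0$ (Prop.~\ref{prop:sizeSplit}), so the answer is unchanged but the formula as you wrote it is not literally correct.
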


Intuitively,~$\alpha_h$ is the probability that~$\frakl$ will be Elkies for~$A_\frakp$ for random~$\frakl$ and~$\frakp$; weak convergence to the Gaussian distribution of \cref{thm:main-cv} is what we would obtain from the central limit theorem in the naive probabilistic model where the events ``$\frakl$ is Elkies for $A_\frakp$" are all independent. We list the first few values of $\alpha_h$ in~\cref{tab:alpha_g}.

\begin{table}[ht]
\renewcommand{\arraystretch}{1.2}    \centering
    \begin{tabular}{|c|c|c|c|c|c|c|c|c|c|c|c|c|} \hline
        $h$ & 1 & 2 & 3 & 4 & 5 & 6 & 7 & 8  \\ \hline
        $\alpha_h$ (exact value) & $\frac{1}{2}$ & $\frac{3}{8}$ & $\frac{5}{16}$ & $\frac{35}{128}$ & $\frac{63}{256}$ & $\frac{231}{1024}$ & $\frac{429}{2048}$ & $\frac{6435}{32768}$  \\ \hline
        $ \alpha_h$ (approximate value) & 0.5 & 0.375 & 0.3125 & 0.2734 & 0.2461 & 0.2256 & 0.2095 & 0.1964 \\ \hline
    \end{tabular}
    \smallskip
    \caption{Values of $\alpha_h$}
    \label{tab:alpha_g}
\end{table}

We prove \cref{thm:main-cv} by analyzing the moments of~$X_{P,L}$ at all orders $k\geq 0$, which we somewhat abusively denote by
\begin{displaymath}
    \mathbb{E}(X_{P,L}^k) := \frac{1}{\# \mathcal{P}_F(P,2P)} \sum_{\frakp\in \mathcal{P}_F(P,2P)} X_{P,L}(\frakp)^k. 
\end{displaymath}
In fact, \cref{thm:main-cv} follows directly from the following result: see \cite[Theorem 30.2]{billingsley95}.

\begin{theorem}
\label{thm:main-moments}
    Assume GRH, and keep notation from \cref{thm:main-cv}. Let~$k\geq 0$ be any integer, and let $M_k$ be the moment of order $k$ of the standard Gaussian distribution (thus $M_k=0$ for odd $k$). Then $\mathbb{E}(X_{P,L}^k)$ converges to $M_k$ as $P,L \to \infty$ with $P \gg L^n$ for every positive integer $n$. More precisely, we have
    \[
     \mathbb{E}(X_{P,L}^k) = M_k + O_{A,k}\left( \frac{1}{L^{1/2}\log(L)^{1/2}} + \frac{L^{k(2h^2 + h + 3/2)}\log(P)^2}{\log(L)^{k/2}P^{1/2}}\right).
    \]
\end{theorem}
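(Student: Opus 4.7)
We argue by the method of moments, as already noted after the statement. Write $N = \#\mathcal{P}_K(L,2L) \asymp L/\log L$, let $\sigma^2 = \alpha_h(1-\alpha_h)N$, and for $\mathfrak{l} \in \mathcal{P}_K(L,2L)$ set $Y_\mathfrak{l}(\mathfrak{p}) = \mathbf{1}_{\mathfrak{l}\,\text{Elkies for }A_\mathfrak{p}} - \alpha_h$. Expanding the $k$-th power gives
\[
\mathbb{E}(X_{P,L}^k) \;=\; \sigma^{-k} \sum_{(\mathfrak{l}_1, \ldots, \mathfrak{l}_k) \in \mathcal{P}_K(L,2L)^k} \mathbb{E}\Bigl(\prod_{i=1}^k Y_{\mathfrak{l}_i}\Bigr),
\]
and we group the $k$-tuples according to the set partition of $\{1,\ldots,k\}$ they induce. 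Each $Y_\mathfrak{l}$ is a centred class function of $\Frob_\mathfrak{p}$ acting on $A[\mathfrak{l}]$, so the joint expectation can be handled by effective Chebotarev applied to the Galois representation modulo $\prod_i \mathfrak{l}_i$.

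The first step is to translate Elkies-ness into a Galois-theoretic condition: $\mathfrak{l}$ is Elkies for $A_\mathfrak{p}$ exactly when $\Frob_\mathfrak{p}$ on $A[\mathfrak{l}]$ lies in the conjugation-invariant subset $S_\mathfrak{l} \subset \GSp_{2h}(\O/\mathfrak{l})$ of elements admitting a Lagrangian invariant subspace. A cycle-type enumeration over the partitions $\Sigma_h$ of $h$, counting Lagrangians invariant under a semisimple symplectic element whose characteristic polynomial factors according to a given partition, gives $|S_\mathfrak{l}|/|\GSp_{2h}(\O/\mathfrak{l})| = \alpha_h + O(1/\ell)$ with $\ell = N_{K/\Q}\mathfrak{l}$; this simultaneously proves the closed form for $\alpha_h$ stated in the theorem.

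The second step is a Chebotarev input. For distinct $\mathfrak{l}_1, \ldots, \mathfrak{l}_r$ above sufficiently large rational primes, the large-image hypothesis (combined with the results recalled in \cref{sec:galrep}) implies that the mod-$\prod \mathfrak{l}_i$ Galois image contains $\prod_i \Sp_{2h}(\O/\mathfrak{l}_i)$; applying effective Chebotarev under GRH to the fixed field of $\rho_{\mathfrak{l}_1\cdots\mathfrak{l}_r}$, whose discriminant is controlled by $\prod_i |\GSp_{2h}(\O/\mathfrak{l}_i)| \asymp L^{r(2h^2+h+1)}$, yields for any exponents $e_1, \ldots, e_r \geq 1$
\[
\mathbb{E}\Bigl(\prod_{i=1}^r Y_{\mathfrak{l}_i}^{e_i}\Bigr) \;=\; \prod_{i=1}^r \mathbb{E}(Y_{\mathfrak{l}_i}^{e_i}) \;+\; O\!\left(\frac{L^{r(2h^2+h+1)}\log(P)^2}{P^{1/2}}\right).
\]
By step~1, $\mathbb{E}(Y_\mathfrak{l}) = O(1/\ell)$ while $\mathbb{E}(Y_\mathfrak{l}^2) = \alpha_h(1-\alpha_h) + O(1/\ell)$; in particular the joint expectation is negligible unless every $\mathfrak{l}_i$ in the tuple appears with multiplicity at least two.

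The last step is the combinatorial assembly. The number of $k$-tuples inducing a partition with $r$ blocks is $\asymp N^r$, so after dividing by $\sigma^k \asymp N^{k/2}$ only the perfect matchings ($r = k/2$, each block of size~$2$) survive to leading order. For even $k$ there are $(k-1)!! = m_k$ such matchings, each contributing $(\alpha_h(1-\alpha_h))^{k/2}$ which exactly cancels the normalization; for odd $k$ no perfect matching exists and the leading term vanishes. The remaining contributions, coming from partitions with a singleton block weighted by the $O(1/\ell)$ factor from step~1, or with some block of size $\geq 3$, together account for the first error term; summing the Chebotarev error over all $N^k$ tuples and dividing by $\sigma^k$ produces the second error term. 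The main obstacle I anticipate is the uniformity of effective Chebotarev in the growing modulus $\prod_i \mathfrak{l}_i$: controlling the discriminant of the fixed field of $\rho_{\mathfrak{l}_1\cdots\mathfrak{l}_r}$ in terms of $L$ and $r \leq k$ is precisely what dictates the exponent $2h^2+h+3/2$ in the second error. A secondary technical point is the $O(1/\ell)$ approximation in step~1, which requires counting Lagrangian subspaces stable under a semisimple symplectic element in terms of its cycle type and must be tracked carefully through the moment computation.
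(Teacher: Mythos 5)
Your proposal is correct and follows essentially the same route as the paper: expand the $k$-th moment, group tuples by which indices carry equal primes (the paper's $\mathcal{Q}_{k,j}$ decomposition is equivalent to your set-partition bookkeeping), apply effective Chebotarev under GRH to the mod-$\L$ representation, feed in the uniform count of split matrices from \cref{sec:gsp}, and observe that only perfect pairings contribute to leading order, giving $(k-1)!!\,(\alpha_h(1-\alpha_h))^{k/2}$ which cancels the normalization; your exponent bookkeeping $L^{k(f(h)+1/2)} = L^{k(2h^2+h+3/2)}$ also matches. One small caveat on Step~2: the Haar expectation over the mod-$\prod_i\frakl_i$ image does \emph{not} factor exactly as $\prod_i\mathbb{E}(Y_{\frakl_i}^{e_i})$, since the multipliers $\lambda_i$ are all controlled by the single cyclotomic character and are tied together whenever two $\frakl_i$ lie above the same rational prime. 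The paper circumvents this by not factoring at all: it uses the multiplier-uniform estimate $\#\Split{2h}{\F_q}(\lambda_0) = \alpha_h q^{f(h)-1} + O_h(q^{f(h)-2})$ (\cref{prop:sizeSplit}) to bound each factor $S_{\lambda_i}(\frakl_i) = O(L^{f(h)-2})$ uniformly in $\lambda_i$, then sums over $\lambda\in\lambda(G_\L)$ and divides by $\#G_\L$ to get the $O(1/L^r)$ bound directly (\cref{prop:sumLiFixed}). Your conclusion is nevertheless the right one; just be aware that the factorization-of-expectations step needs this uniformity rather than literal independence.
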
 
Here the notation $O_{A,k}$ means that the implicit constants in Landau's notation are allowed to depend on $A$ (hence on $F$, $\O$, and~$h$) and~$k$.

In the case of elliptic curves, \cref{thm:main-moments} refines \cite{shparlinski15} as we consider moments of all orders and provide an asymptotic equivalent of the even moments rather than an upper bound. To the best of our knowledge, \cref{thm:main-moments} is also the first quantitative result on the distribution of Elkies primes in higher dimensions. In particular, a consequence of this theorem is that there are enough Elkies primes to run the SEA algorithm in dimension~2 on average over reductions of a fixed abelian variety: see \cite[Def.~3.7]{kiefferCountingPointsAbelian2022}.

The proof of \cref{thm:main-moments} is inspired from~\cite{shparlinski15}: we apply an explicit version of the \v{C}ebotarev density theorem (which relies on GRH) to number field extensions of~$F$ cut out by torsion subgroups of $A$, and count how many elements in their Galois groups correspond to $\frakl$ being Elkies for $A_{\frakp}$. The result then follows from rearranging the summations and from a combinatorial argument to determine the leading term in the moments of $X_{P,L}$.

We also provide numerical experiments on the distribution of Elkies primes in large ranges in the case of elliptic curves: it was actually the very smooth aspect of the data which prompted us to try and prove Theorem \ref{thm:main-cv}.

One might wonder if this convergence result to a Gaussian distribution also holds when considering all elliptic curves (or more generally abelian varieties) over a fixed~$\F_q$ as in \cite{shparlinski14}. To answer this, it seems that one would need careful control on the class numbers appearing in the distribution of traces of Frobenius for elliptic curves over~$\F_q$.

\subsection{Organization} In~\cref{sec:galrep}, we review the properties of Galois representations attached to abelian varieties with RM, characterize Elkies primes both in terms of Frobenius elements in the Galois representation and in terms of the existence of isogenies, and recall results from the literature on large Galois images. In~\cref{sec:gsp}, we count matrices in $\GSp_{2h}(\O/\frakl\O)$ (and related groups) corresponding to Elkies primes, a key input to the \v{C}ebotarev density theorem. We prove \cref{thm:main-moments} in~\cref{sec:distribution}, and report on our numerical experiments in~\cref{sec:experiments}.

\subsection{Acknowledgments} The first author was supported by the Agence Nationale de la Recherche/France 2030 grant CRYPTANALYSE (reference 22-PECY-0010.) This research was funded in whole, or in part, by the Luxembourg National Research Fund (FNR), grant reference PRIDE23/18685085. For the purpose of open access, and in fulfilment of the obligations arising from the grant agreement, the authors have applied a Creative Commons Attribution 4.0 International (CC BY 4.0) license to any Author Accepted Manuscript version arising from this submission.

\subsection{Statement on competing interests} The authors declare no competing interests.

\subsection{Data availability statement}
The code used to generate the figures in \cref{sec:experiments} is available as
one of the paper's source files at \url{https://arxiv.org/abs/2411.18171}.

\section{Galois representations and Elkies primes}
\label{sec:galrep}

In this section, we review basic facts on the structure of torsion subgroups of abelian varieties with RM over any field (§\ref{subsec:torsion}). Then we characterize Elkies primes for such abelian varieties in terms of the existence of isogenies (§\ref{subsec:elkies-isog}) and, in the case of finite fields or reductions of abelian varieties over number fields, in terms of the action of Frobenius on torsion subgroups~(§\ref{subsec:elkies-frob}).
Finally, we review deeper results on large Galois images (§\ref{subsec:openimage}).

\subsection{Torsion subgroups of abelian varieties with RM}
\label{subsec:torsion}

Throughout, we use the notation listed in \cref{tab:notations}. For the reader's convenience, the table also includes symbols defined later in this section. For now, $F$ is any field, and~$A$ is a polarized abelian variety over~$F$ with real multiplication by an order~$\O$ as in the introduction. We write~$d_A$ for the degree of the polarization of~$A$ and~$c_\O$ for the conductor of~$\O.$

\begin{table}
\begin{tabular}{lp{0.81\textwidth}}
    $K$ & a totally real number field \\
    $d$ & the degree of~$K$ over~$\Q$ \\
    $\O$ & an order of~$K$ \\
    $\O_K$ & the ring of integers in~$K$ \\
    $c_\O$ & the conductor of~$\O$, an ideal supported at primes dividing~$[\O_K:\O]$ \\
    $N_{K/\Q}$ & the norm map for ideals or elements of~$K/\Q$ \\
    $\Tr_{K/\Q}$ & the trace map for elements of~$K/\Q$ \\
    $\ell$ & a prime number in~$\Z_{\geq 1}$ \\
    $\frakl$ & a prime ideal of~$\O$ above~$\ell$ \\
    \\
    $F$ & a field \\
    $p$ & the characteristic exponent of~$F$ (a prime number, or~$1$ if $\mathrm{char}(F)=0$) \\
    $G_F$ & the absolute Galois group of~$F$ \\
    $\chi_\ell$ & the cyclotomic character $G_F\to \Z_\ell^\times$ \\
    \\
    $A$ & a polarized abelian variety over~$F$ with RM by~$\O$, i.e.~endowed with a primitive embedding $\O\embed \End_F(A)$, with $1 \mapsto \id_A$, whose image consists of elements that are invariant under the Rosati involution\\
    $g$ & the dimension of~$A$; in particular $d|g$ \\
    $h$ & the integer $g/d$ \\
    $d_A$ & the degree of the polarization of~$A$ \\
    $\pi_A$ & the Frobenius endomorphism of~$A$ (if~$F$ is finite) \\
    $T_\ell(A)$ & the $\ell$-adic Tate module of~$A$ \\
    $e_\ell$ & the Weil pairing on~$T_\ell(A)$, with values in~$\Z_\ell$ \\
    $A[\ell]$ & the $\ell$-torsion subgroup of~$A$ \\
    $\psi_\ell$ & the~$\O$-linear alternating form on~$A[\ell]$ defined in \cref{lem:basic-galrep} \\
    $A[\frakl]$ & the $\frakl$-torsion subgroup of~$A$, as defined in~\eqref{eq:frakl-torsion} below \\
    $\rho_\ell$ & the $\ell$-adic Galois representation with target~$\GSp_{2h}(\O\otimes\Z_\ell)$, cf.~\eqref{eq:galrep-ell} below\\
    $\rhobar_\ell$, $\rhobar_\frakl$ & the Galois representations modulo~$\ell$ and~$\frakl$ as in~\eqref{eq:galrep-mod-ell}, \eqref{eq:galrep-mod-frakl} below \\
    \\
    $\lambda$ & the multiplier character $\GSp_{2h} \to \Gm$, as in \cref{def:symplectic} \\
    $\GSp_{2h}(R;U)$ & the subset of $\GSp_{2h}(R)$ given by $\lambda^{-1}(U)$, as in \cref{def:symplectic} \\
    $\Split{2h}{\F_q}(\lambda_0)$ & the split matrices in $\GSp_{2h}(\F_q)$ with multiplier~$\lambda_0$, as in \cref{def:split}.
\end{tabular}
\bigskip
\caption{List of notations}
\label{tab:notations}
\vspace{-1em}
\end{table}

Recall that whenever~$n\geq 1$ is prime to~$p$, the $n$-torsion subgroup $A[n]$ of~$A$, seen as group scheme, is étale. Throughout, we abuse notation and identify these group schemes (as well as their subgroups) with their groups of points over a separable closure~$F^{sep}$ of~$F$, endowed with an action of the absolute Galois group~$G_F$ of~$F$. For all such~$n$, there is a canonical nondegenerate pairing $e_n$ on $A[n] \times A^\vee[n]$, called the Weil pairing, whose values are $n$-th roots of unity in~$F^{sep}$. Now if $\ell\neq p$ is a prime number, by making compatible choices of $\ell$-power roots of unity in~$F^{sep}$ and by composing with the polarization of~$A$ on the second argument, we obtain a new pairing
\[
    e_\ell: T_\ell(A)\times T_\ell(A)\to \Z_\ell
\]
on the $\ell$-adic Tate module~$T_\ell(A)$. We will work with this version of the Weil pairing in the rest of the paper. The Tate module~$T_\ell(A)$ is then a free~$\Z_\ell$-module of rank~$2g$ on which~$e_\ell$ is nondegenerate. 
If further~$\ell$ is prime to~$d_A$, then $e_\ell$ also gives a nondegenerate alternating form on~$A[\ell]$ with values in~$\F_\ell = \Z/\ell\Z$. We may also view~$T_\ell(A)$ as an $\O\otimes\Z_\ell$-module, using the action of~$\O$ as endomorphisms of~$A$.

\begin{lemma}
\label{lem:basic-galrep}
Assume that~$\ell$ is coprime to~$p$, $d_A$ and~$c_\O$, so that $\O\otimes\Z_\ell = \O_K\otimes\Z_\ell$.
\begin{enumerate}
     \item \label{it:basic-galrep-free} $T_\ell(A)$ is a free $\O\otimes \Z_\ell$-module of rank $2h$.
     \item \label{it:basic-galrep-psi} There exists a unique $\O\otimes \Z_\ell$-bilinear alternating form $\psi_\ell: T_\ell(A)\times T_\ell(A) \to \O\otimes \Z_\ell$ with the following property: for every $x,y\in T_\ell(A)$, $e_\ell(x,y) = \Tr_{K/\Q}(\psi_\ell(x,y))$.
     \end{enumerate}
\end{lemma}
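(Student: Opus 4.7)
My plan is to handle the two parts separately, making use of the decomposition
\[
\O \otimes \Z_\ell \;=\; \O_K \otimes \Z_\ell \;=\; \prod_{\frakl \mid \ell} \O_\frakl,
\]
in which each factor $\O_\frakl$ is a complete discrete valuation ring. The orthogonal idempotents of this product induce a compatible splitting $T_\ell(A) = \bigoplus_{\frakl \mid \ell} T_\frakl(A)$, so part~(1) reduces to showing that each $T_\frakl(A)$ is free of rank~$2h$ over~$\O_\frakl$. Freeness is immediate: $T_\frakl(A)$ is $\Z_\ell$-torsion-free as a submodule of $T_\ell(A)$, hence $\O_\frakl$-torsion-free, and any finitely generated torsion-free module over a DVR is free. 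For the rank, I would pick a primitive element $x\in\O$ with $K = \Q(x)$ and examine the characteristic polynomial $P_x(T)$ of~$x$ acting on the rational Tate module $V_\ell(A):= T_\ell(A) \otimes_{\Z_\ell}\Q_\ell$. By a classical result $P_x(T) \in \Z[T]$ has degree $2g$. Its $\Q$-rationality, together with the fact that every $\overline{\Q}$-root must be a Galois conjugate of~$x$, forces all $d$ conjugates to appear with the same multiplicity, whence $P_x = p_x^{2h}$ where $p_x$ is the minimal polynomial of~$x$. Matching this against the multiplicity of the eigenvalue $\sigma(x)$ on $T_\frakl(A) \otimes \overline{\Q}_\ell$ for any embedding $\sigma: K \hookrightarrow \overline{\Q}_\ell$ inducing~$\frakl$ yields $\mathrm{rank}_{\O_\frakl} T_\frakl(A) = 2h$, uniformly in~$\frakl$.

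For~(2), I exploit that~$\O$ consists of Rosati-invariant elements, which translates to $\O$-adjointness of the Weil pairing: $e_\ell(ax,y) = e_\ell(x,ay)$ for all $a \in \O$. For fixed $x,y \in T_\ell(A)$, the map $a \mapsto e_\ell(ax,y)$ is $\Z_\ell$-linear on $\O\otimes\Z_\ell$, and the non-degenerate trace pairing on $\O\otimes\Z_\ell$ represents it by a unique element $\psi_\ell(x,y)\in\O\otimes\Z_\ell$ satisfying $e_\ell(ax,y) = \Tr_{K/\Q}\bigl(a\,\psi_\ell(x,y)\bigr)$; the case $a=1$ recovers the required identity. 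The $\O\otimes\Z_\ell$-bilinearity of $\psi_\ell$ follows formally: for $b \in \O\otimes\Z_\ell$, both $\psi_\ell(bx,y)$ and $b\,\psi_\ell(x,y)$ satisfy the defining relation for the function $a \mapsto e_\ell(abx,y)$, so they agree by uniqueness; linearity in the second argument is analogous via $\O$-adjointness. The alternating property follows because $e_\ell(ax,x) = -e_\ell(x,ax) = -e_\ell(ax,x)$ for all $a$, forcing $\psi_\ell(x,x)=0$. Finally, uniqueness of $\psi_\ell$ in the statement follows by testing the identity $e_\ell(x,y)=\Tr(\psi_\ell(x,y))$ against multiplication by $a$ on $x$, reducing it to the same trace-duality characterization.

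The main technical step is the rank computation in~(1), which hinges on the Galois-equivariance of the characteristic polynomial of an $\O$-endomorphism together with the classical integrality and degree of $P_x$. Part~(2) is essentially formal once the non-degeneracy of the trace pairing on $\O\otimes\Z_\ell$ is invoked.
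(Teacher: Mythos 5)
The paper disposes of both parts by citation alone: part~(1) is attributed to Ribet and part~(2) to Chi (rational version) and Banaszak--Gajda--Kraso\'n (integral version). You instead reconstruct self-contained arguments, which is a genuinely different presentation. Your treatment of~(1) is sound: the decomposition $\O_K\otimes\Z_\ell = \prod_\frakl \O_\frakl$ into DVRs, torsion-freeness giving freeness of each factor $T_\frakl(A)$, and the characteristic-polynomial bookkeeping $P_x = p_x^{2h}$ forcing equal ranks across the $\frakl$'s, is in line with Ribet's argument and uses the classical facts (integrality and degree $2g$ of the characteristic polynomial of an endomorphism) correctly. Your treatment of~(2) also has the right skeleton --- Rosati-invariance gives $\O$-adjointness $e_\ell(ax,y)=e_\ell(x,ay)$, and $\O$-bilinearity and antisymmetry of $\psi_\ell$ then follow formally from uniqueness in trace duality.

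There is, however, a real gap in the existence step of~(2). You write that ``the non-degenerate trace pairing on $\O\otimes\Z_\ell$ represents [the functional $a\mapsto e_\ell(ax,y)$] by a unique element $\psi_\ell(x,y)\in\O\otimes\Z_\ell$.'' Non-degeneracy of the trace pairing on $\O_K\otimes\Z_\ell$ (always true, since $K/\Q$ is separable) gives \emph{uniqueness} but not \emph{existence}: the map $\O_K\otimes\Z_\ell \to \Hom_{\Z_\ell}(\O_K\otimes\Z_\ell,\Z_\ell)$, $c\mapsto\Tr(c\,\cdot)$, is injective but surjective only when it is an isomorphism, i.e.\ when the trace form is \emph{unimodular} over $\Z_\ell$, which holds exactly when $\ell\nmid\mathrm{disc}(K)$, i.e.\ $\ell$ is unramified in $K$. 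The stated hypotheses ($\ell\nmid p\,d_A c_\O$) do not exclude ramified $\ell$. At a ramified $\ell$, what your construction actually produces is a pairing with values in the inverse different $\mathfrak{d}_K^{-1}\otimes\Z_\ell$, which strictly contains $\O_K\otimes\Z_\ell$; landing back inside $\O\otimes\Z_\ell$ requires an extra argument (or an extra hypothesis). This is precisely the subtlety that Banaszak--Gajda--Kraso\'n's Lemma~3.1 handles and that your sketch elides by conflating non-degenerate with perfect. (A smaller edge case: deducing the alternating property of $\psi_\ell$ from antisymmetry also uses $\ell\neq 2$.) In practice these issues are harmless for the paper, since only large $\ell$ matter in the application, but as written your existence argument needs the additional hypothesis that $\ell$ is unramified in $K$.
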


\begin{proof}
\begin{enumerate}
    \item This is \cite[Prop.~2.1.1]{ribetGaloisActionDivision1976}.
    \item The existence and uniqueness of~$\psi_\ell$ after tensoring with~$\Q_\ell$ is \cite[Lemma~1.2.1]{chi$ell$adic$lambda$adicRepresentations1992}. In fact, $\psi_\ell$ exists at the level of $\O\otimes\Z_\ell$-modules by \cite[Lemma 3.1]{banaszakImage$ell$adicGalois2006}. \qedhere
\end{enumerate}
\end{proof}

Under the assumptions of \cref{lem:basic-galrep}, we also consider the decomposition of $\O/\ell\O$ as a product of fields:
\[
     \O/\ell\O = \prod_{\frakl|\ell} \O/\frakl\O.
\]
Then for each $\frakl|\ell$, we define the $\frakl$-torsion subgroup~$A[\frakl]\subset A[\ell]$ as
\begin{equation}
\label{eq:frakl-torsion}
    A[\frakl] = \bigcap_{f\in \frakl} \ker(f) = \{x\in A[\ell]: f(x) = 0 \text{ for every } f\in \frakl\}.
\end{equation}

\begin{lemma}
    \label{lem:tors-direct-sum}
    Assume that~$\ell$ is coprime to~$p$, $d_A$ and $c_\O$. Then we have a direct sum decomposition
    \[
    A[\ell] = \bigoplus_{\frakl|\ell} A[\frakl]
    \]
    where for each~$\frakl|\ell$, the summand~$A[\frakl]$ is an $(\O/\frakl\O)$-vector space of dimension~$2h$. This direct sum is orthogonal with respect to~$\psi_\ell$, and the restriction of~$\psi_\ell$ to each~$A[\frakl]$ is nondegenerate.
\end{lemma}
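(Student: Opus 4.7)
My plan is to deduce the decomposition from the structure of $A[\ell]$ as a module over $\O/\ell\O$, which in turn follows from the freeness statement in \cref{lem:basic-galrep}\eqref{it:basic-galrep-free}.

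First I would note that since $T_\ell(A)$ is free of rank $2h$ over $\O\otimes\Z_\ell$, reduction modulo~$\ell$ gives that $A[\ell]=T_\ell(A)/\ell T_\ell(A)$ is a free module of rank $2h$ over $(\O\otimes\Z_\ell)/\ell = \O/\ell\O$. Under the hypotheses, $\O/\ell\O = \prod_{\frakl|\ell} \O/\frakl\O$ is a product of fields, with orthogonal idempotents $(e_\frakl)_{\frakl|\ell}$, so $A[\ell] = \bigoplus_{\frakl|\ell} e_\frakl A[\ell]$ and each summand $e_\frakl A[\ell]$ is an $(\O/\frakl\O)$-vector space of dimension $2h$. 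The main combinatorial step is then to identify $A[\frakl]$ (as defined in~\eqref{eq:frakl-torsion}) with $e_\frakl A[\ell]$. The inclusion $e_\frakl A[\ell]\subseteq A[\frakl]$ is immediate since $f e_\frakl = 0$ in $\O/\ell\O$ for every $f\in\frakl$. For the converse, given $x\in A[\frakl]$, write $x=\sum_{\mathfrak{q}|\ell} e_\mathfrak{q} x$; for $\mathfrak{q}\neq\frakl$ I would pick $f\in\frakl\setminus\mathfrak{q}$ (possible since $\frakl,\mathfrak{q}$ are distinct maximal ideals of $\O$ above~$\ell$), and observe that $fe_\mathfrak{q}$ acts as a unit on the $\mathfrak{q}$-summand, so $fx=0$ forces $e_\mathfrak{q} x=0$.

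Next, for orthogonality: $\psi_\ell$ is $(\O/\ell\O)$-bilinear on $A[\ell]$ by \cref{lem:basic-galrep}\eqref{it:basic-galrep-psi}, so for $x\in A[\frakl]$ and $y\in A[\frakl']$ with $\frakl\neq\frakl'$ we have
\[
\psi_\ell(x,y) = \psi_\ell(e_\frakl x, e_{\frakl'} y) = e_\frakl e_{\frakl'} \psi_\ell(x,y) = 0,
\]
since $e_\frakl e_{\frakl'} = 0$.

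Finally, for nondegeneracy, I would first show $\psi_\ell$ is nondegenerate on all of $A[\ell]$: if $\psi_\ell(x,\cdot)\equiv 0$, then $e_\ell(x,\cdot)=\Tr_{K/\Q}(\psi_\ell(x,\cdot))\equiv 0$, and nondegeneracy of the Weil pairing (valid since $\ell\nmid d_A$) gives $x=0$. Then nondegeneracy on each summand $A[\frakl]$ follows from the standard fact that a nondegenerate bilinear form remains nondegenerate on each factor of an orthogonal direct sum decomposition.

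I do not expect a serious obstacle; the main subtlety is just keeping track of the interplay between the $(\O/\ell\O)$-module structure coming from \cref{lem:basic-galrep} and the ideal-theoretic definition of $A[\frakl]$ via annihilators. Once the identification $A[\frakl]=e_\frakl A[\ell]$ is in place, all four assertions follow formally.
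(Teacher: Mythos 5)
Your proof is correct, and it follows the same overall strategy as the paper---decomposing via the structure of $\O/\ell\O$ as a product of fields---but it differs in two places worth noting. For orthogonality, the paper picks a single element $f\in\frakl$ invertible modulo $\frakl'$, uses that $f$ acts surjectively on $A[\frakl']$ to write $y=f(y')$, and then shifts $f$ across the pairing; your version instead phrases this through the orthogonal idempotents $e_\frakl$ and the identity $e_\frakl e_{\frakl'}=0$, which is essentially the same computation packaged more cleanly (the idempotent $e_{\frakl'}$ is built from exactly such elements $f$). The more substantive difference is nondegeneracy: the paper simply cites a result of Banaszak--Gajda--Kraso\'n, whereas you give a short self-contained argument by first deducing nondegeneracy of $\psi_\ell$ on all of $A[\ell]$ from nondegeneracy of the Weil pairing $e_\ell$ together with $e_\ell=\Tr_{K/\Q}\circ\,\psi_\ell$, and then restricting to the summands using orthogonality. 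This is a nice extra, since it keeps the lemma independent of the cited reference; just make sure to observe (as you implicitly do) that nondegeneracy of $\psi_\ell$ here means injectivity of $x\mapsto\psi_\ell(x,\cdot)$, which over the field $\O/\frakl\O$ on the finite-dimensional space $A[\frakl]$ is the usual notion. The identification $A[\frakl]=e_\frakl A[\ell]$ is handled carefully in both directions and closes the one potential gap, namely that the ideal-theoretic definition in~\eqref{eq:frakl-torsion} matches the module-theoretic summand.
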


\begin{proof}
    The decomposition of~$A[\ell]$ as a direct sum is a consequence of~\cref{lem:basic-galrep}\eqref{it:basic-galrep-free}. Let us check that this decomposition is orthogonal with respect to~$\psi_\ell$. Let~$\frakl\neq\frakl'$ be prime ideals above~$\ell$, and fix an element~$f\in \frakl$ which is invertible modulo~$\frakl'$.
    If~$x\in A[\frakl]$ and $y\in A[\frakl']$, then there exists $y'\in A[\frakl']$ such that $y = f(y')$. By~$\O$-linearity of~$\psi_\ell$, we get
    \[
        \psi_\ell(x,y) = \psi_\ell(x, f(y')) = \psi_\ell(f(x), y') = \psi_\ell(0,y') = 0.
    \]
    Finally, each piece is nondegenerate by \cite[Lemma 3.2]{banaszakImage$ell$adicGalois2006}.
\end{proof}

We now include the action of the Galois group~$G_F$ in the picture. Let~$\ell$ be coprime to~$p$, $d_A$ and $c_\O$. By equivariance of the Weil pairing (see for instance \cite[Lemma 4.7]{banaszakImage$ell$adicGalois2006}), we have for all $\sigma\in G_F$ and $x,y\in T_\ell(A)$:
\[
    e_\ell(\sigma(x),\sigma(y)) = \chi_\ell(\sigma) e_\ell(x,y).
\]
The action of~$\sigma$ on~$A[\ell]$ is also $\O$-linear because the elements of~$\O$, seen as endomorphisms, are defined over~$F$ by assumption. By nondegeneracy of~$\Tr_{K/\Q}$, we have for all $x,y\in T_\ell(A)$:
\[
    \psi_\ell(\sigma(x), \sigma(y)) = \chi_\ell(\sigma) \psi_\ell(x,y).
\]
In other words,~$\sigma$ preserves $\psi_\ell$ up to multiplication by the scalar $\chi_\ell(\sigma)\in \Z_\ell^\times$.

In order to identify the action of~$\sigma$ on~$A[\ell]$ as an element in a standard symplectic group, we choose once and for all a symplectic basis $(v_1,\ldots,v_{2h})$ of~$T_\ell(A)$ as an $\O\otimes \Z_\ell$-module. This means that the alternating form~$\psi_\ell$ in this basis takes the standard form
\[
J_{2h} = \begin{pmatrix} 0 & I_h \\ -I_h & 0 \end{pmatrix},
\]
where~$I_h$ denotes the $h\times h$ identity matrix. We summarize our notation for the attached symplectic group in the following definition.

\begin{definition}
\label{def:symplectic}
We denote by~$\GSp_{2h}$ the general symplectic group with respect to the standard form~$J_{2h}$: for any commutative ring~$R$, we have
\[
    \GSp_{2h}(R) = \{m\in \GL_{2h}(R): m^\tp J_{2h}\, m = \lambda(m) J_{2h} \text{ for some } \lambda(m)\in R^\times \}.
\]
We call the character $\lambda: \GSp \to \Gm$ appearing in this equation the \emph{multiplier}. The kernel of~$\lambda$ is~$\Sp_{2h}$, the usual symplectic group.
If~$U$ is a subset of~$R^\times$, we also write
\[
    \GSp_{2h}(R; U) = \{m\in \GSp_{2h}(R): \lambda(m)\in U\}.
\]  
\end{definition}

Assuming that~$\ell$ is coprime to~$d_A$ and~$c_\O$, the same vectors $v_1,\ldots,v_{2h}$ also form a symplectic basis of~$A[\ell]$ as an $\O/\ell\O$-module, and for every prime~$\frakl|\ell$ of~$\O$, a symplectic basis of~$A[\frakl]$ as an $(\O/\frakl\O)$-vector space.

Summarizing, we identify the action of~$\sigma\in G_F$ on $T_\ell(A)$ with an element of the general symplectic group,
\begin{equation}
\label{eq:galrep-ell}
    \rho_\ell(\sigma) \in \GSp_{2h}(\O \otimes \Z_\ell; \Z_\ell^\times) \subset \GSp_{2h}(\O \otimes \Z_\ell)
\end{equation}
such that
\[
    \lambda(\rho_\ell(\sigma)) = \chi_\ell(\sigma) \in \Z_\ell^\times \subset (\O\otimes\Z_\ell)^\times.
\]
We identify the action of~$\sigma$ on the $\ell$-torsion subgroup $A[\ell]$ with the image of~$\sigma$ under the reduced representation
\begin{equation}
    \label{eq:galrep-mod-ell}
    \rhobar_\ell(\sigma)\in \GSp_{2h}(\O/\ell\O; \F_\ell^\times).
\end{equation}
For each prime $\frakl$ of~$\O$, we also identify the action of $\sigma$ on~$A[\frakl]$ with an element
\begin{equation}
\label{eq:galrep-mod-frakl}
    \rhobar_\frakl(\sigma) \in \GSp_{2h}(\O/\frakl\O).
\end{equation}
with the same multiplier $\chi_\ell(\sigma)$. We call $\rho_\ell$ the \emph{$\ell$-adic Galois representation}, and $\rhobar_\ell$ (resp.~$\rhobar_\frakl$) the \emph{Galois representation modulo $\ell$} (resp~$\frakl$), attached to~$A$.
By \cref{lem:basic-galrep}\eqref{it:basic-galrep-free} and \cref{lem:tors-direct-sum}, the decompositions
\[
    A[\ell] = \bigoplus_{\frakl|\ell} A[\frakl] \quad\text{and}\quad \GSp_{2h}(\O/\ell\O) = \prod_{\frakl|\ell} \GSp_{2h}(\O/\frakl\O)
\]
are compatible in the sense that the following diagram commutes:
\begin{center}
\begin{tikzcd}
    G_F \ar[r, "{\rhobar}_\ell"] \ar[rr, swap, bend right=15, "{\rhobar}_\frakl"] & \GSp_{2h}(\O/\ell\O; \F_\ell^\times) \ar[r] & \GSp_{2h}(\O/\frakl\O).
\end{tikzcd}    
\end{center}
In particular, if $\ell$ splits completely in~$\O$, then $\rhobar_\ell(G_F)$ is a subgroup of $\GSp_{2h}(\F_\ell)^d$ consisting of tuples of matrices $(m_1,\ldots,m_d)$ such that $\lambda(m_1)=\cdots=\lambda(m_d)$. At the other extreme, if~$\ell$ is inert in~$\O$, then $\rhobar_\ell(G_F)$ is a subgroup of $\GSp_{2h}(\F_{\ell^d}; \F_\ell^\times)$.

The representation $\rhobar_\frakl$ can be seen as the restriction modulo $\frakl$ of the $\frakl$-adic representation considered in~\cite[§1.1]{chi$ell$adic$lambda$adicRepresentations1992}.

\subsection{Elkies primes for abelian varieties with RM}
\label{subsec:elkies-isog}

Let us restate the definition of Elkies primes given in the introduction. We are mainly interested in finite fields, but for now, our discussion remains valid over any field $F$. We keep notation from~\cref{tab:notations}, and assume throughout that the prime ideals~$\frakl$ we consider are coprime with~$p$, $d_A$, and~$c_\O$.

\begin{definition}
    \label{def:Elkies}
    We say that $\frakl$ is \emph{Elkies} for $A$ if there exists an $F$-rational subgroup of $A[\frakl]$ that is maximal isotropic for the Weil pairing~$e_\ell$ and stable under~$\O$. Note that this last condition is automatic when $N_{F/\Q}(\frakl) = \ell$, as $\O/\frakl\O$ only consists of scalars.
\end{definition}

We can equivalently phrase this definition in terms of isotropic subspaces for~$\psi_\ell$.

\begin{lemma}
\label{lem:elkies-isotropic} The prime $\frakl$ is Elkies for $A$ if and only if there exists a maximal isotropic sub-$(\O/\frakl\O)$-vector space of $A[\frakl]$ that is maximal isotropic for~$\psi_\ell$ and~$F$-rational.
\end{lemma}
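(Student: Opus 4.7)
The plan is to exploit the relation $e_\ell(x,y) = \Tr_{K/\Q}(\psi_\ell(x,y))$ established in \cref{lem:basic-galrep}, together with the nondegeneracy of the trace pairing $(a,t) \mapsto \Tr_{K/\Q}(at)$ on $\O/\frakl\O$, which holds because~$\frakl$ is coprime to~$c_\O$. Since $\frakl$ acts trivially on $A[\frakl]$, the $\O$-action on $A[\frakl]$ factors through the field $\O/\frakl\O$, so an $F$-rational $\O$-stable subgroup $G \subset A[\frakl]$ is the same thing as an $F$-rational $(\O/\frakl\O)$-subspace; this reduces the lemma to comparing the two notions of (maximal) isotropy on such subspaces.

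For the forward direction, suppose $G$ is $\O$-stable and $e_\ell$-isotropic. For any $x,y \in G$ and $a \in \O$, stability under $\O$ gives $ax \in G$, whence
\[
\Tr_{K/\Q}\bigl(a \cdot \psi_\ell(x,y)\bigr) = e_\ell(ax,y) = 0.
\]
Letting $a$ range over $\O$ and invoking nondegeneracy of the trace on $\O/\frakl\O$, this forces $\psi_\ell(x,y) = 0$, so $G$ is $\psi_\ell$-isotropic. The reverse implication is immediate: if $\psi_\ell$ vanishes on $G$, so does $e_\ell = \Tr_{K/\Q} \circ \psi_\ell$, and the $(\O/\frakl\O)$-structure already makes $G$ automatically $\O$-stable.

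To reconcile the two notions of maximality, I would compare $\F_\ell$-dimensions. By \cref{lem:tors-direct-sum}, $A[\frakl]$ is symplectic of dimension $2h$ over $\O/\frakl\O \cong \F_{\ell^e}$ with respect to $\psi_\ell$, hence symplectic of $\F_\ell$-dimension $2he$ with respect to $e_\ell$. A maximal isotropic $(\O/\frakl\O)$-subspace for $\psi_\ell$ therefore has $\F_\ell$-dimension $he$, which coincides with the $\F_\ell$-dimension of a maximal $e_\ell$-isotropic subgroup of $A[\frakl]$. Combined with the isotropy equivalence above, this yields the lemma.

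I do not expect a serious obstacle: the one point that deserves care is the nondegeneracy of the trace on $\O/\frakl\O$, which follows from the coprimality assumption on $c_\O$ and the fact (implicit in \cref{lem:tors-direct-sum}) that $\ell$ behaves well enough in $\O$ for $\O/\frakl\O$ to be the expected residue field. Everything else is a dimension count.
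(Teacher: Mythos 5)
Your proof is correct and follows essentially the same route as the paper's: use $e_\ell = \Tr_{K/\Q}\circ\psi_\ell$ plus nondegeneracy of the trace to transfer isotropy between the two pairings, then match the maximal dimensions. One point in your favor: you make explicit the step the paper glosses over, namely that one must use $\O$-stability of $G$ to deduce $\Tr_{K/\Q}(a\,\psi_\ell(x,y))=0$ for \emph{all} $a\in\O$ before invoking nondegeneracy of the trace pairing; the vanishing of $\Tr_{K/\Q}(\psi_\ell(x,y))$ alone would not suffice.
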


\begin{proof}
    Suppose~$\frakl$ is Elkies for~$A$, i.e.~there exists an $F$-rational $\F_\ell$-vector space $G\subset A[\frakl]$ which is maximal isotropic for the Weil pairing and stable under~$\O$. We may also view $G\subset A[\frakl]$ as an $F$-rational sub-$\O/\ell\O$-vector space of dimension~$h$. By \cref{lem:basic-galrep}\eqref{it:basic-galrep-psi}, the trace of $\psi_\ell$ vanishes on~$G\times G$, so~$\psi_\ell$ vanishes on~$G\times G$ as well as the trace is nondegenerate.

    Conversely, if~$G\subset A[\frakl]$ be a maximal isotropic subspace for~$\psi_\ell$ in~$A[\frakl]$. Seen as an~$\F_\ell$-vector space,~$G$ is isotropic for the Weil pairing by \cref{lem:basic-galrep}\eqref{it:basic-galrep-psi}, and is maximal for dimension reasons. Therefore, $\frakl$ is Elkies for~$A$.
\end{proof}

\Cref{def:Elkies} is a suitable generalization of the notion of Elkies primes for elliptic curves~\cite{shparlinski14}, abelian surfaces without RM~\cite[§3.2]{kiefferCountingPointsAbelian2022}, and abelian surfaces with RM in the case of split primes~\cite[§4.1]{kiefferCountingPointsAbelian2022}. Moreover, there is still a close link between Elkies primes and the existence of $F$-rational isogenies compatible with the RM structure and the polarization of~$A$. Let us specify this link in more detail, for motivation only, as it will not be used in the rest of the paper.

First we introduce the following notation. The Néron--Severi group $\mathrm{NS}(A)$ of~$A$ (the group of line bundles on~$A$ up to algebraic equivalence) is related to the endomorphisms of~$A$, as follows. The $\Q$-algebra $\End^0(A) = \End_{\overline{F}}(A)\otimes\Q$ is endowed with the Rosati involution~$\dagger$ coming from our choice of polarization on~$A$. Let~$\End^0(A)^\dagger$ denote the sub-vector space of elements invariant under~$\dagger$, and $\End(A)^\dagger = \End^0(A)^\dagger\cap \End(A)$. There is an isomorphism $\mathrm{NS}(A)\otimes\Q\simeq \End^0(A)^\dagger$, which depends on the chosen polarization of~$A$ \cite[(3) p.\,190]{mumford70}. Given $\alpha\in \End(A)^\dagger$ and two polarized abelian varieties $A,B$ with RM by~$\O$, we say that an isogeny $\phi: A\to B$ is an \emph{$\alpha$-isogeny} if the RM structures of~$A$ and~$B$ are compatible via~$\phi$, and if the pullback of the polarization of~$B$ via~$\phi$ (seen as an element of $\mathrm{NS}(A)$) corresponds to~$\alpha$ via the previous isomorphism. The element~$\alpha$ is then necessarily totally positive \cite[(IV) p.\,209]{mumford70}. Equivalently, we ask that the diagram
\begin{center}
\begin{tikzcd}
    A \ar[r, "\alpha"] \ar[d, "\phi"] & A \ar[r] & A^\vee \\ B \ar[rr] && B^\vee \ar[u, swap, "\phi^{\vee}"]
\end{tikzcd}
\end{center}
commutes, where $\vee$ denotes duals and the unlabeled arrows are the polarizations. This implies that~$\ker(\phi)$ is maximal isotropic in~$A[\alpha]$ for its canonical nondegenerate pairing; conversely, if $G\subset A[\alpha]$ is a maximal isotropic subspace, then $A/G$ carries a unique polarization of degree $d_A$ such that the quotient isogeny $A\to A/G$ is an $\alpha$-isogeny~\cite[Cor.~p.\,231]{mumford70}. Recall that our Elkies primes are prime to~$p$, $d_A$ and $c_\O$.

\begin{prop}
\label{prop:elkies-isog}
    \begin{enumerate}
        \item \label{it:elkies-isog-1} The prime~$\frakl$ is Elkies if and only if there exists an abelian variety~$B$ over~$F$ with RM by~$\O$ and an $F$-rational $\frakl$-isogeny $\phi: A\to B$ in the sense of \cite[Def.~4.1]{brooksIsogenyGraphsOrdinary2017}.
        \item \label{it:elkies-isog-2} Let~$\frakl_1,\ldots,\frakl_r$ be distinct Elkies primes for~$A$, and let $k_1,\ldots,k_r\geq 0$ be integers such that $\frakl_1^{k_1}\cdots\frakl_r^{k_r}$ is trivial in the narrow class group of~$\O$. Let~$\alpha\in \O$ be a totally positive generator of this product. Then there exists an abelian variety $B$ over~$F$ with RM by~$\O$ and endowed with a polarization of degree~$d_A$, and an  $F$-rational $\alpha$-isogeny $\phi:A\to B$.
    \end{enumerate}
\end{prop}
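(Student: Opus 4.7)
For part~\eqref{it:elkies-isog-1}, the plan is to invoke \cref{lem:elkies-isotropic} to reformulate the Elkies condition as the existence of an $F$-rational, $\O$-stable, maximal isotropic sub-$(\O/\frakl\O)$-vector space $G \subset A[\frakl]$. Given such a~$G$, the quotient map $\phi: A\to A/G$ is $F$-rational; the $\O$-stability of~$G$ makes the endomorphisms in~$\O$ descend to~$A/G$, while maximal isotropy, together with~\cite[Cor.~p.\,231]{mumford70}, ensures that~$A/G$ inherits a unique polarization of degree~$d_A$ making~$\phi$ an $\frakl$-isogeny in the sense of~\cite[Def.~4.1]{brooksIsogenyGraphsOrdinary2017}. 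The converse is immediate: the kernel of any $F$-rational $\frakl$-isogeny is by construction $F$-rational, $\O$-stable, and maximal isotropic in~$A[\frakl]$, so~\cref{lem:elkies-isotropic} applies.

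For part~\eqref{it:elkies-isog-2}, the strategy is to construct~$\phi$ as a composition of $\frakl_i$-isogenies. The key preliminary observation is the persistence of the Elkies property along such a step: if $\psi:A\to A'$ is an $F$-rational $\frakl_i$-isogeny, then for $j\neq i$ coprimality of~$\frakl_i$ and~$\frakl_j$ implies that~$\psi$ restricts to a Galois- and $\O$-equivariant isomorphism $A[\frakl_j]\xrightarrow{\sim}A'[\frakl_j]$ matching the canonical pairings up to a unit, so $\frakl_j$ remains Elkies for~$A'$; moreover, $\psi(A[\frakl_i])\subset A'[\frakl_i]$ equals the kernel of the dual isogeny~$\psi^\vee:A'\to A$ and is therefore an $F$-rational, $\O$-stable, maximal isotropic sub-$(\O/\frakl_i\O)$-vector space, so~$\frakl_i$ is also Elkies for~$A'$. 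Using this fact iteratively, I would build a chain
\[
A = A^{(0)}\xrightarrow{\phi_1} A^{(1)}\xrightarrow{\phi_2}\cdots\xrightarrow{\phi_N} A^{(N)} =: B
\]
of $F$-rational $\frakl$-isogenies of total length $N=k_1+\cdots+k_r$, comprising~$k_i$ consecutive $\frakl_i$-isogenies for each index~$i$. Each intermediate $A^{(s)}$ is $F$-rational, polarized of degree~$d_A$, and endowed with RM by~$\O$ by part~\eqref{it:elkies-isog-1}, and the Elkies hypothesis required for the next step is furnished by the two observations above.

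It then remains to identify $\phi = \phi_N\circ\cdots\circ\phi_1$ as an $\alpha$-isogeny. A direct degree computation gives $\deg\phi = \prod_i N_{K/\Q}(\frakl_i)^{hk_i} = N_{K/\Q}(\alpha)^h$, and tracking kernels step by step shows $\ker\phi\subset A\bigl[\prod_i\frakl_i^{k_i}\bigr] = A[\alpha]$. Maximal isotropy of~$\ker\phi$ inside~$A[\alpha]$ should then follow by combining the orthogonality of distinct $\frakl_i$-pieces (an immediate extension of \cref{lem:tors-direct-sum} to the $\frakl_i^{k_i}$-torsion) with a step-by-step verification that each $\frakl_i$-block assembles into a maximal isotropic subgroup of~$A[\frakl_i^{k_i}]$. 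A final appeal to~\cite[Cor.~p.\,231]{mumford70} exhibits~$\phi$ as an $\alpha$-isogeny onto an $F$-rational polarized abelian variety~$B$ of degree~$d_A$ with RM by~$\O$, as required.

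The main obstacle I anticipate is the pairing bookkeeping: verifying via the dual isogeny that~$\psi(A[\frakl_i])$ is maximal isotropic at each step, and checking that $k_i$ successive $\frakl_i$-isogenies assemble into a subgroup of~$A[\frakl_i^{k_i}]$ that is genuinely maximal isotropic and not merely of the right cardinality. Both are routine consequences of Mumford's theory of polarizations under isogeny, but will require some care in the actual write-up.
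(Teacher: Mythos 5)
Your proof of part~\eqref{it:elkies-isog-1} contains a misapplication of Mumford's corollary. The result you cite (\cite[Cor.~p.\,231]{mumford70}) applies to maximal isotropic subgroups of $A[\alpha]$ for an actual endomorphism $\alpha\in\End(A)^\dagger$, i.e.\ a \emph{principal} kernel: one first passes to the polarization $\alpha\cdot\xi_A$, whose kernel is $A[\alpha]$, and then descends it to the quotient. For a non-principal $\frakl$ there is no such $\alpha$ with $\ker(\lambda_{\alpha\xi_A})=A[\frakl]$, and the quotient $A/G$ does \emph{not} in general acquire an RM-compatible polarization of degree $d_A$. This is precisely what the narrow class group hypothesis in part~\eqref{it:elkies-isog-2} is there to fix, so your inserting the degree-$d_A$ claim into part~\eqref{it:elkies-isog-1} undercuts the very point of the statement. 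The paper's proof of \eqref{it:elkies-isog-1} is a one-liner: it is a restatement of the definition of $\frakl$-isogenies in \cite{brooksIsogenyGraphsOrdinary2017}, and no descent of polarization is claimed or needed.

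For part~\eqref{it:elkies-isog-2}, your strategy is genuinely different from the paper's. The paper directly constructs, for each~$i$, an explicit $F$-rational, $\O$-stable, maximal isotropic subgroup $K_i'\subset A[\frakl_i^{k_i}]$: when $k_i=2m_i$ is even it takes $K_i'=A[\frakl_i^{m_i}]$, and when $k_i=2m_i+1$ is odd it takes $K_i'=A[\frakl_i^{m_i+1}]\cap\eta^{-1}(K_i)$ for any $\eta\in\O$ of $\frakl_i$-adic valuation~$m_i$. It then sets $K=\bigoplus K_i'$, checks orthogonality of the direct sum using functoriality of the pairings, and applies Mumford's corollary once, at the very end, to the maximal isotropic $K\subset A[\alpha]$. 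Your iterative approach (a chain of $\frakl_i$-isogenies) could in principle be made to work, but it has two real problems as written. First, the interim statement that ``each intermediate $A^{(s)}$ is polarized of degree $d_A$ by part~\eqref{it:elkies-isog-1}'' is false, inheriting the error above; part~\eqref{it:elkies-isog-1} makes no such claim. Second, and more seriously, you never specify \emph{which} maximal isotropic subgroup to take at each step, and the final isotropy of $\ker\phi$ inside $A[\alpha]$ genuinely depends on these choices. An arbitrary chain of $k_i$ ``forward'' $\frakl_i$-isogenies does not produce a maximal isotropic kernel in $A[\frakl_i^{k_i}]$ in general (the argument that works for elliptic curves, where any $\O$-cyclic subgroup of size $\ell^2$ is automatically isotropic, does not extend to $h\geq 2$). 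To make it work you would need to alternate with $\frakl_i$-dual isogenies for the ``even'' part of the exponent, which amounts to reconstructing exactly the paper's $K_i'$. You flag this yourself as ``the main obstacle'' and hope it is routine, but it is precisely the content of the proof, and the paper's direct construction of $K_i'$ is both the fix and the shorter route.
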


\begin{proof}[Proof of \cref{prop:elkies-isog}] \eqref{it:elkies-isog-1} directly comes from the definition of $\frakl$-isogenies.

We now prove~\eqref{it:elkies-isog-2}. For $1\leq i\leq r$, let $K_i\subset A[\frakl_i]$ be $F$-rational, maximal isotropic, and $\O$-stable subgroups as in \cref{def:Elkies}. Define now $K_i' = A[\frakl_i^{m_i}]$ if $k_i = 2m_i$ is even, and $K_i' = A[\frakl_i^{m_i + 1}] \cap \eta^{-1}(K_i)$, where~$\eta\in \O$ is any element whose $\frakl_i$-adic valuation is exactly~$m_i$, when $k_i = 2m_i+1$ is odd. We can check that $K_i'$ is independent of the choice of~$\eta$, and that it is an $F$-rational, $\O$-stable, maximal isotropic subspace in $A[\frakl_i^{k_i}]$. By \cref{lem:basic-galrep} and the Chinese remainder theorem, we have
\[A[\alpha] = \bigoplus_{i=1}^r A[\frakl_i^{k_i}].\]
Moreover, the restriction of the pairing on $A[\alpha]$ to each subgroup $A[\frakl_i^{k_i}]$ is precisely the Weil pairing $e_{\ell_i}$ (mod $\ell_i^{k_i}$), where~$\ell_i\in \Z$ denotes the prime below $\frakl_i$, and the direct sum is orthogonal, as can be seen from the functorial properties of those pairings~\cite[p.\,228]{mumford70}.
Therefore $K = K_1'\oplus\cdots\oplus K_r'$ is maximal isotropic in~$A[\alpha]$, and is the kernel of the isogeny~$\phi$ we are looking for.
\end{proof}

\subsection{Elkies primes and the action of Frobenius}
\label{subsec:elkies-frob}

We keep the notation of~\cref{tab:notations}, and assume first that \textbf{$F = \F_q$ is a finite field.} Let~$\pi_A$ denote the Frobenius endomorphism of~$A$. We continue to assume that $\frakl$ is prime to~$p$, $d_A$ and $c_\O$. We can also view the Frobenius map as an element $\pi\in G_F$.

\begin{lemma}
\label{lem:elkies-frob}
    Let~$A$ be an abelian variety over~$F = \F_q$ with RM by~$\O$. The prime $\frakl$ is Elkies for~$A$ if and only if $A[\frakl]$ admits a maximal isotropic $\O$-stable subspace stable under~$\pi_A$, if and only if $\rhobar_\frakl(\pi)\in \GSp_{2h}(\O/\frakl\O)$ stabilizes a maximal isotropic stable subspace in $(\O/\frakl\O)^{2h}$.
\end{lemma}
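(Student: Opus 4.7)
The plan is to chain together three equivalent reformulations, each of which is essentially a definition-unfolding. The nontrivial ingredient is \cref{lem:elkies-isotropic}, which we have just proved; everything else is standard.

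First I would observe that over the finite field $F=\F_q$, a subgroup of $A[\frakl]\subset A(\overline{F})$ is $F$-rational if and only if it is stable under the absolute Galois group~$G_F$. Since $G_F$ is topologically generated by the Frobenius $\pi$, an $(\O/\frakl\O)$-subspace $G\subset A[\frakl]$ is $F$-rational if and only if it is $\pi$-stable. Moreover the action of $\pi$ on $A[\frakl]$ is by definition the action of the Frobenius endomorphism $\pi_A$ (this is the characterizing property of $\pi_A$ on torsion points), so ``$\pi$-stable'' and ``$\pi_A$-stable'' mean the same thing. One extra point to note: since $\O\subset \End_F(A)$, the endomorphism $\pi_A$ commutes with the action of~$\O$, and in particular acts $(\O/\frakl\O)$-linearly on~$A[\frakl]$; so speaking of $\pi_A$-stable $(\O/\frakl\O)$-subspaces is meaningful.

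With this in hand, the equivalence between the Elkies condition and the first Frobenius condition is immediate from \cref{lem:elkies-isotropic}: that lemma states that~$\frakl$ is Elkies if and only if $A[\frakl]$ admits an $F$-rational maximal isotropic sub-$(\O/\frakl\O)$-vector space for $\psi_\ell$, and by the previous paragraph ``$F$-rational sub-$(\O/\frakl\O)$-vector space'' is the same as ``$\pi_A$-stable $(\O/\frakl\O)$-subspace''. This gives the first ``if and only if''.

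For the second ``if and only if'', I would simply invoke the chosen symplectic basis $(v_1,\dots,v_{2h})$ of $A[\frakl]$ fixed before \cref{def:symplectic}. Through this basis, $A[\frakl]$ is identified with $(\O/\frakl\O)^{2h}$ equipped with the standard form $J_{2h}$, and the action of $\pi_A$ on $A[\frakl]$ is identified with multiplication by the matrix $\rhobar_\frakl(\pi)\in \GSp_{2h}(\O/\frakl\O)$. Maximal isotropic $\pi_A$-stable $(\O/\frakl\O)$-subspaces of $A[\frakl]$ thus correspond bijectively to maximal isotropic $\rhobar_\frakl(\pi)$-stable $(\O/\frakl\O)$-subspaces of $(\O/\frakl\O)^{2h}$, yielding the desired equivalence.

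There is really no hard step here: the whole argument is a bookkeeping exercise of transporting the Elkies property from~$A[\frakl]$ into the linear algebra side via~$\psi_\ell$ and the chosen symplectic basis. The only place where one must be a bit careful is to justify that $F$-rationality coincides with $\pi_A$-stability (as opposed to merely $\pi$-stability as a Galois element), but this follows from the standard identification of the Galois action on torsion with the Frobenius endomorphism over a finite field.
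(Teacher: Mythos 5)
Your proposal is correct and follows the same route as the paper: invoke \cref{lem:elkies-isotropic} and translate ``$F$-rational'' into ``$\pi_A$-stable'' (hence ``$\rhobar_\frakl(\pi)$-stable'' via the fixed symplectic basis). The paper states this in a single sentence, while you fill in the standard bookkeeping steps, but the argument is the same.
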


\begin{proof}
    This is a restatement of \cref{lem:elkies-isotropic}, using the fact that a subspace of~$A[\frakl]$ is $\F_q$-rational if and only if it is stable under~$\pi_A$.
\end{proof}

\Cref{lem:elkies-frob} prompts us to make the following definition.

\begin{definition}
\label{def:split}
    Let~$k$ be a finite field. We say that a matrix $m\in \GSp_{2h}(k)$ is \emph{split} if it leaves some maximal isotropic subspace of $k^{2h}$ stable. We denote by $\Split{2h}{k}\subset \GSp_{2h}(k)$ the subset of split matrices, and for $\lambda_0\in {k}^\times$, we write 
    \[\Split{2h}{k}(\lambda_0) := \{ m\in \Split{2h}{k}:\lambda(m) = \lambda_0\}. \]
    We note that $\Split{2h}{k}(\lambda_0)$ is a conjugacy-invariant subset of $\GSp_{2h}(k)$.
\end{definition}

Since~$\chi_\ell(\pi) = q$, another restatement of \cref{lem:elkies-isotropic} is the following.

\begin{lemma}
\label{lem:elkies-iff-frob-split}
    The prime~$\frakl$ is Elkies for~$A$ if and only if $\rhobar_\frakl(\pi) \in \Split{2h}{\O/\frakl\O}(q)$.
\end{lemma}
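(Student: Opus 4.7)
The proof is essentially a direct combination of \cref{lem:elkies-frob} with the definition of $\Split{2h}{k}(\lambda_0)$ (\cref{def:split}) plus the computation of the multiplier of $\rhobar_\frakl(\pi)$. So the plan is very short.

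First, by \cref{lem:elkies-frob}, $\frakl$ is Elkies for $A$ if and only if $\rhobar_\frakl(\pi) \in \GSp_{2h}(\O/\frakl\O)$ admits a maximal isotropic stable subspace in $(\O/\frakl\O)^{2h}$. By \cref{def:split}, this is exactly the condition $\rhobar_\frakl(\pi) \in \Split{2h}{\O/\frakl\O}$. So the only remaining thing to check is that the multiplier of $\rhobar_\frakl(\pi)$ equals $q\in (\O/\frakl\O)^\times$.

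Second, recall from the discussion preceding~\eqref{eq:galrep-ell} and from~\eqref{eq:galrep-mod-frakl} that $\lambda(\rhobar_\frakl(\pi))$ is the reduction modulo $\frakl$ of $\lambda(\rho_\ell(\pi)) = \chi_\ell(\pi)$. For the Frobenius $\pi\in G_{\F_q}$, the cyclotomic character satisfies $\chi_\ell(\pi) = q$ in $\Z_\ell^\times$ (this is just the action of Frobenius on $\ell$-power roots of unity over $\F_q$). Hence the multiplier equals $q \bmod \frakl$, and we conclude that $\rhobar_\frakl(\pi)\in\Split{2h}{\O/\frakl\O}(q)$.

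There is no real obstacle here: both the content (existence of a $\pi_A$-stable maximal isotropic subspace $\Leftrightarrow$ $\F_q$-rationality of such a subspace) and the multiplier identification have been set up in the preceding subsections. The lemma is properly a corollary packaged for later convenience, so the proof should literally be a one-line combination of \cref{lem:elkies-frob}, \cref{def:split}, and $\chi_\ell(\pi)=q$.
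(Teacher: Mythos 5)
Your proof is correct and is essentially the paper's own argument: the paper presents the lemma as an immediate restatement of \cref{lem:elkies-isotropic} (via \cref{lem:elkies-frob} and \cref{def:split}) once one notes $\chi_\ell(\pi)=q$, which is exactly the combination you make. Your write-up is just a slightly more explicit version of the paper's one-line justification.
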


We now switch gears and assume that~\textbf{$F$ is a number field}. We fix a polarized abelian variety $A$ over~$F$ with RM by~$\O$. For every prime $\frakp$ of~$F$ with residue field~$F_\frakp$ of good reduction for~$A$, the reduction~$A_\frakp$ of~$A$ modulo~$\frakp$ is a polarized abelian variety of dimension~$g$ over~$F_\frakp$ with RM by~$\O$. Indeed, the listed properties can all be formulated in terms of isogenies between abelian varieties and their duals, and such isogenies extend uniquely to Néron models at~$\frakp$ by \cite[§1.4, Prop.~4]{boschNeronModels1990}. We can characterize Elkies primes for~$A_\frakp$ in terms of the Galois representations $\rhobar_\frakl$ evaluated at Frobenius elements in~$G_F$.

\begin{prop}
\label{prop:elkies-frob-nf}
    Let~$\frakp$ be a prime of good reduction for~$A$ above $p\in \Z$, and let~$\frakl$ be a prime of~$\O$ that is coprime to~$p, d_A$ and $c_\O$. Then $\frakl$ is Elkies for the reduction~$A_\frakp$ if and only if $\rhobar_\frakl(\sigma_\frakp) \in \Split{2h}{\O/\frakl\O}(N_{F/\Q}(\frakp))$, where~$\sigma_\frakp\in G_F$ is any Frobenius element at~$\frakp$ (unique up to conjugation in~$G_F$).
\end{prop}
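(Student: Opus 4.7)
The plan is to reduce the statement to \cref{lem:elkies-iff-frob-split}, which handles the finite-field case, via the standard comparison between the Galois representation of $A$ restricted to a decomposition group and the Galois representation attached to $A_\frakp$. First, since $\frakp$ is a prime of good reduction and $\frakl$ is coprime to the residue characteristic~$p$, the Néron--Ogg--Shafarevich criterion ensures that the inertia subgroup at~$\frakp$ acts trivially on $T_\ell(A)$, and hence on $A[\frakl]$. Therefore the restriction of $\rhobar_\frakl$ to the decomposition group at~$\frakp$ factors through the quotient by inertia, which is isomorphic to $G_{F_\frakp}$, and any choice of $\sigma_\frakp$ projects to the arithmetic Frobenius in this quotient.

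Next, I would invoke the reduction map to identify $A[\frakl](\overline{F})$ with $A_\frakp[\frakl](\overline{F_\frakp})$. Because the Néron model of~$A$ at~$\frakp$ is an abelian scheme (good reduction) and all the extra structure lifts uniquely to Néron models by the same reference \cite[§1.4, Prop.~4]{boschNeronModels1990} that was used to justify that $A_\frakp$ has RM by~$\O$ and a polarization of degree~$d_A$, this identification is $G_{F_\frakp}$-equivariant, $\O$-linear, and compatible with the Weil pairing $e_\ell$, hence also with~$\psi_\ell$. Choosing the symplectic $\O\otimes\Z_\ell$-basis of $T_\ell(A)$ so that it reduces modulo~$\frakl$ to the symplectic $\O/\frakl\O$-basis fixed for $A_\frakp[\frakl]$, the matrix $\rhobar_\frakl(\sigma_\frakp)$ then coincides with the matrix describing the action of the arithmetic Frobenius on $A_\frakp[\frakl]$.

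Now, the arithmetic Frobenius of $G_{F_\frakp}$ acts on the $\frakl$-torsion of $A_\frakp$ as the Frobenius endomorphism $\pi_{A_\frakp}$. So by \cref{lem:elkies-iff-frob-split} applied to $A_\frakp$ over the finite field $F_\frakp$ of cardinality $q = N_{F/\Q}(\frakp)$, the prime~$\frakl$ is Elkies for $A_\frakp$ if and only if this common matrix lies in $\Split{2h}{\O/\frakl\O}(q)$. It remains to observe that the multiplier of $\rhobar_\frakl(\sigma_\frakp)$ is automatically $\chi_\ell(\sigma_\frakp) \bmod \ell$, and $\chi_\ell(\sigma_\frakp) \equiv N_{F/\Q}(\frakp) \pmod{\ell}$ by the very definition of the cyclotomic character at an unramified Frobenius; the two conditions therefore coincide.

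No serious obstacle arises: the argument is a composition of standard facts. The only point requiring some care is the compatibility of the bases used to define $\rhobar_\frakl$ for $A$ and for $A_\frakp$, which is why I would fix them simultaneously via the reduction isomorphism so that the two matrix presentations literally agree rather than merely being conjugate.
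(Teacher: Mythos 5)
Your proposal is correct and follows essentially the same route as the paper: identify the $\frakl$-torsion of $A$ with that of $A_\frakp$ via the reduction isomorphism at a good-reduction prime, transport the fixed symplectic basis through it so that $\rhobar_\frakl(\sigma_\frakp)$ becomes literally the matrix of $\pi_{A_\frakp}$, and then invoke \cref{lem:elkies-iff-frob-split} together with $\chi_\ell(\sigma_\frakp)\equiv N_{F/\Q}(\frakp)\pmod\ell$. The only cosmetic difference is that you spell out the Néron--Ogg--Shafarevich input and the structural compatibilities of the reduction map, where the paper simply cites Serre's lemma on good reduction for the bijection on torsion points.
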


\begin{proof}
    Denote by $F'$ the field of definition of~$A[\frakl]$, i.e.~the smallest number field such that the representation $\rhobar_\frakl:G_F\to \GSp_{2h}(\O/\frakl\O)$ factors through $\Gal(F'/F)$. Let~$\mathfrak{P}$ be a prime of $F'$ above~$\frakp$, and let~$\sigma_\frakp\in G_F$ be a Frobenius element stabilizing~$\mathfrak{P}$; we can consider~$\sigma_\frakp$ as a (uniquely specified) element of $\Gal(F'/F)$. Reduction modulo~$\mathfrak P$ defines a bijection $A[\frakl]\to A_\frakp[\frakl]$ by \cite[§1, Lemma 2]{serreGoodReductionAbelian1968}, so our choice of fixed symplectic basis of~$T_\ell(A)$ also fixes a symplectic basis of~$A_\frakp[\frakl]$ as an $(\O/\frakl\O)$-vector space. By definition, $\sigma_\frakp$ induces the Frobenius map of the extension of residue fields~$F'_{\mathfrak P} / F_\frakp$. Therefore, $\rhobar_\frakl(\sigma_\frakp)$ is precisely the matrix of the Frobenius endomorphism~$\pi_{A_\frakp}$ in the symplectic basis of~$A_\frakp[\frakl]$ specified above. We now apply \cref{lem:elkies-iff-frob-split}, using the fact that $\chi_\ell(\sigma_\frakp) \equiv N_{F/\Q}(\frakp) \text{ mod } \ell$.
\end{proof}

 \Cref{prop:elkies-frob-nf} indicates that the \v{C}ebotarev density theorem in $F'/F$ will provide information on how often a fixed prime $\frakl$ is Elkies for the reduced abelian varieties $A_{\frakp}$ as~$\frakp$ grows. In order to apply this theorem, we need to know what the Galois group $\Gal(F'/F)$ is: this is the purpose of the ``large Galois image'' hypothesis in \cref{thm:main-cv}.

\subsection{Large Galois images}
\label{subsec:openimage}

We keep notation from \cref{tab:notations}; here,~$F$ is a number field. To formalize the definition of large Galois images used in the introduction, we introduce the following notation. If~$n$ is an integer, we write
\[
    \Zhat_{\geq n} = \prod_{\ell \text{ prime, } \ell\geq n} \Z_\ell.
\]
The $\ell$-adic Galois representations $\rho_\ell:G_F\to \GSp_{2h}(\O\otimes\Z_\ell)$ can be combined into a global representation
\[
    \rhohat_n: G_F \to \GSp_{2h}(\O\otimes \Zhat_{\geq n})
\]

\begin{definition}
\label{def:large-image}
We say that $A$ has \emph{large Galois image} if for some integer~$n\geq 1$, the image of~$\rhohat_n$
contains $\Sp_{2h}(\O\otimes \Zhat_{\geq n})$. Because the cyclotomic character $\chi_\ell$ is surjective for large enough~$\ell$, an equivalent condition is that for some large enough~$n$,
\[
    \rhohat_n(G_F) = \GSp_{2h}(\O\otimes\Zhat_{\geq n}; (\Zhat_{\geq n})^\times).
\]
\end{definition}

In the main results of this paper, \cref{thm:main-cv,thm:main-moments}, we only consider abelian varieties with RM that have large Galois images. In this subsection, we gather some necessary and sufficient conditions for this to happen.

\begin{prop}
    If~$A$ has large Galois image, then $\End_{\overline{\Q}}(A) = \O$. In particular~$A$ is simple of type {\rm I} in Albert's classification.
\end{prop}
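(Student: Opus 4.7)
The plan is to show that every $\phi \in \End_{\overline{\Q}}(A)$ already lies in $\O$; the ``simple of type I'' assertion will then follow because an abelian variety whose geometric endomorphism algebra is a field (here, the totally real field $K$) is simple, and Albert's type I is precisely the case where $\End^0(A)$ is a totally real field.

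Fix $\phi \in \End_{\overline{\Q}}(A)$, and choose a finite Galois extension $F'/F$ over which $\phi$ is defined, so that $\phi$ commutes with $\rho_\ell(G_{F'})$ acting on $T_\ell(A)$ for every $\ell$ coprime to $p$, $d_A$ and $c_\O$. The large-image hypothesis provides $\rho_\ell(G_F) \supseteq \Sp_{2h}(\O\otimes\Z_\ell)$ for all $\ell\geq n$, and $\rho_\ell(G_{F'})$ has index at most $[F':F]$ in $\rho_\ell(G_F)$. The first key step is a purely group-theoretic input: for all sufficiently large $\ell$ (depending only on $[F':F]$), every open subgroup of $\Sp_{2h}(\O\otimes\Z_\ell)$ of index at most $[F':F]$ is the whole group. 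This rests on the simplicity of the finite groups $\PSp_{2h}(\O/\frakl)$ for large residue characteristic, together with the fact that the minimal index of a proper subgroup of $\PSp_{2h}(\O/\frakl)$ grows unboundedly with $N_{K/\Q}(\frakl)$, plus an analysis of the pro-$\ell$ filtration of $\Sp_{2h}(\O\otimes\Z_\ell)$. Consequently $\rho_\ell(G_{F'}) \supseteq \Sp_{2h}(\O\otimes\Z_\ell)$ for all large $\ell$, and $\phi$ commutes with the full symplectic group on $T_\ell(A)$.

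I would then invoke the double centralizer theorem: the $\Z_\ell$-linear centralizer of $\Sp_{2h}(\O\otimes\Z_\ell)$ inside $\End_{\Z_\ell}(T_\ell(A))$ is the scalar subring $\O\otimes\Z_\ell$. This is standard, using that $\Sp_{2h}(\O\otimes\Q_\ell)$ spans $M_{2h}(\O\otimes\Q_\ell)$ over $\O\otimes\Q_\ell$ (Zariski density plus the irreducibility of the standard representation on each local factor of $V_\ell = T_\ell(A)\otimes\Q_\ell$), combined with Schur's lemma. Hence, for all large $\ell$, $\phi$ acts on $T_\ell(A)$ as scalar multiplication by some $c_\ell \in \O\otimes\Z_\ell$.

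Scalars commute with everything, so $\rho_\ell(\sigma)\phi = \phi\rho_\ell(\sigma)$ for every $\sigma\in G_F$; the classical faithfulness of $\End_{\overline{F}}(A) \hookrightarrow \End_{\Z_\ell}(T_\ell(A))$ then yields $\sigma\phi\sigma^{-1} = \phi$, hence $\phi\in\End_F(A)$. On the other hand, the injectivity of $\End^0_{\overline{\Q}}(A)\otimes\Q_\ell \hookrightarrow \End_{\Q_\ell}(V_\ell)$ together with the fact that $\phi$ lands in the $\Q_\ell$-subspace $K\otimes\Q_\ell$ forces $\phi\in K$ by a routine intersection-of-subspaces argument (a $\Q$-subspace of $\End^0_{\overline{\Q}}(A)$ intersected with a $\Q_\ell$-subspace after base change). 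Combining the two conclusions, $\phi \in K \cap \End_F(A) = \O$ by primitivity of the embedding $\O \embed \End_F(A)$. The main obstacle in this plan is the subgroup statement in the second step: one needs a uniform-in-$\ell$ bound on the indices of open subgroups of $\Sp_{2h}(\O\otimes\Z_\ell)$. Once this structural input is available, the rest of the argument is formal.
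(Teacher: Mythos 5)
Your outline is sound and arrives at the right conclusion, but the step you flag as ``the main obstacle'' --- the uniform-in-$\ell$ bound on indices of open subgroups of $\Sp_{2h}(\O\otimes\Z_\ell)$ --- is exactly what the paper's argument is designed to avoid, and avoiding it is the whole point. The paper works with the adelic representation: since $\rhohat_n(G_F)\supseteq\Sp_{2h}(\O\otimes\Zhat_{\geq n})=\prod_{\ell\geq n}\Sp_{2h}(\O\otimes\Z_\ell)$ and $G_{F'}$ has finite index in $G_F$, the image $\rhohat_n(G_{F'})$ is an open finite-index subgroup of a profinite group containing this infinite product. Any open subgroup of $\prod_\ell H_\ell$ contains a basic open set $\prod_{\ell\in S}U_\ell\times\prod_{\ell\notin S}H_\ell$ with $S$ finite, hence contains the factor $H_\ell$ for all $\ell\notin S$. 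So one gets, for free, a single prime $\ell$ (in fact all but finitely many) with $\rho_\ell(G_{F'})\supseteq\Sp_{2h}(\O\otimes\Z_\ell)$, with no appeal to the simplicity of $\PSp_{2h}$ over residue fields, growth of minimal indices, or pro-$\ell$ lifting lemmas. This is a strictly lighter toolkit. In addition, whereas you invoke the double centralizer theorem and Zariski density to identify the commutant of $\Sp_{2h}$, the paper gives a short self-contained computation: using diagonal symplectic matrices $\Diag(x_1,\ldots,x_h,x_1^{-1},\ldots,x_h^{-1})$ it shows every nonzero vector is an eigenvector of a commuting endomorphism, hence the commutant of $\Sp_{2h}(k)$ consists of scalars. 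The downstream part of your argument (applying Faltings, then descending from $\End_{\overline{\Q}}(A)\otimes\Q=K$ to $\End_{\overline{\Q}}(A)=\O$ via primitivity) agrees with the paper. As a proof proposal, yours has an unfilled nontrivial lemma at its core; as a matter of mathematics it is salvageable, but you should internalize that the adelic phrasing of the large-image hypothesis is there precisely so that you never need to control subgroups of a single $\Sp_{2h}(\O\otimes\Z_\ell)$.
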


\begin{proof}
    Since we assumed the RM embedding $\O\embed\End(A)$ to be primitive, it is sufficient to prove that $\End_{\overline{\Q}}(A)\otimes \Q = K$. Let~$F'$ be a number field over which all endomorphisms of~$A$ are defined. Since~$G_{F'}$ is an open subgroup of finite index in~$G_F$, there exists a prime~$\ell$ such that $\rho_\ell(G_{F'})$ still contains $\Sp_{2h}(\O\otimes\Z_\ell)$. By Faltings~\cite{faltingsEndlichkeitssaetzeFuerAbelsche1983}, $\End_{F'}(A)\otimes\Q_\ell$ is the commutant of $\rho_\ell(G_{F'})$ in $\End(T_\ell(A)\otimes\Q_\ell)$. 
    
    We claim that the commutant of $\Sp_{2h}(\O\otimes\Z_\ell)$ in $\End(T_\ell(A)\otimes\Q_\ell)$ is precisely given by the action of elements of $\O\otimes\Q_\ell$ on $T_\ell(A)$. This would prove that $\End_{F'}(A)\otimes \Q_\ell$ is contained in $\O\otimes\Q_\ell$, hence $\End_{\overline{\Q}}(A)\otimes\Q = K$ as required.

    To show that the claim holds, choose an element~$\gamma\in \End(T_\ell(A)\otimes\Q_\ell)$ commuting with~$\Sp_{2h}(\O\otimes\Z_\ell)$. In particular, considering scalar matrices in $\Sp_{2h}$, we see that $\gamma$ acts $\O$-linearly: we can therefore consider~$\gamma$ as a $2h\times 2h$ matrix with coefficients in~$\O\otimes\Q_\ell$. Since~$\O\otimes\Q_\ell$ is a product of fields, it is now sufficient to show that for any field~$k$, the commutant of~$\Sp_{2h}(k)$ consists of scalar matrices only.
    
    This last fact is well-known (the Lie algebra representation of $\mathfrak{sp}_{2h}$ on $\mathfrak{sl}_{2h}$ is irreducible), but for completeness,
    we include a short proof when~$k$ is infinite. Let~$\gamma$ be a $2h\times 2h$ matrix over~$k$ commuting with~$\Sp_{2h}(k)$. Consider any symplectic basis $(v_1,\ldots,v_{2h})$ of $k^{2h}$, and let $x_1,\ldots,x_h\in k^\times$ be such that $x_1,\ldots,x_h,x_1^{-1},\ldots,x_h^{-1}$ are distinct. The endomorphism whose matrix in the basis $(v_1,\ldots,v_{2h})$ is $\mathrm{Diag}(x_1,\ldots,x_r,x_1^{-1},\ldots,x_r^{-1})$ is symplectic, so $v_1,\ldots,v_{2h}$ are eigenvectors of~$\gamma$. As each nonzero element of~$k^{2h}$ is part of some symplectic basis, we deduce that each nonzero vector is an eigenvector for~$\gamma$, hence~$\gamma$ is a scalar.
\end{proof}

Conversely, we have the following theorem, after results of Serre~\cite{serreResumeCoursAu1985}, Ribet~\cite{ribetGaloisActionDivision1976}, Chi~\cite{chi$ell$adic$lambda$adicRepresentations1992} and Banaszak--Gajda--Kraso\'n~\cite{banaszakImage$ell$adicGalois2006}.

\begin{theorem}
\label{thm:open-image}
    Assume that $\End_{\overline{\Q}}(A) = \O$ and either:
    \begin{itemize}
        \item $d = 1$ and $g \in \{2, 6\}$, or
        \item $h = g/d$ is odd.
    \end{itemize}
    Then~$A$ has large Galois image.
\end{theorem}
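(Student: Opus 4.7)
The plan is to combine deep $\ell$-adic open image theorems, applied at each prime separately, with a Goursat-style assembly that passes from local surjectivity at each $\ell$ to the required adelic statement.

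First, I would treat the two cases of the hypothesis using the cited $\ell$-adic results. When $d=1$ and $g\in\{2,6\}$ or $g$ is odd, Serre's open image theorem~\cite{serreResumeCoursAu1985} asserts that the image $\rho_\ell(G_F)$ is open in $\GSp_{2g}(\Z_\ell;\Z_\ell^\times)$, and more precisely that for all but finitely many $\ell$ the equality $\rho_\ell(G_F) = \GSp_{2g}(\Z_\ell;\Z_\ell^\times)$ holds; in particular $\rho_\ell(G_F)\supseteq \Sp_{2g}(\Z_\ell)$. When $h = g/d$ is odd, the analogous statement $\rho_\ell(G_F)\supseteq\Sp_{2h}(\O\otimes\Z_\ell)$ for all but finitely many $\ell$ is due to Banaszak--Gajda--Kraso\'n~\cite{banaszakImage$ell$adicGalois2006}, building on Ribet's case $h=1$~\cite{ribetGaloisActionDivision1976} and Chi's contribution~\cite{chi$ell$adic$lambda$adicRepresentations1992}. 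In both cases, let $N\geq 1$ be a bound beyond which the inclusion $\rho_\ell(G_F)\supseteq\Sp_{2h}(\O\otimes\Z_\ell)$ holds for every $\ell\geq N$.

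Second, I would glue these $\ell$-adic inclusions into the desired adelic one using Goursat's lemma, applied to the closed subgroup
\[
H := \rhohat_N(G_F)\cap\Sp_{2h}(\O\otimes\Zhat_{\geq N}) \subset \prod_{\ell\geq N}\Sp_{2h}(\O\otimes\Z_\ell).
\]
By step one, $H$ projects onto each factor $\Sp_{2h}(\O\otimes\Z_\ell)$. To conclude $H$ is the full product, one first passes to the level of maximal pro-$\ell$ quotients (which are pairwise coprime and therefore automatically independent by Chinese remainder), then examines the residual finite quotients $\Sp_{2h}(\O/\frakl\O)\cong\Sp_{2h}(\F_{\ell^e})$. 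After enlarging $N$ if necessary, these finite groups are perfect and quasi-simple, with pairwise non-isomorphic simple quotients across distinct $\frakl$ (they are distinguishable already by cardinality, since $\#\PSp_{2h}(\F_{\ell^e})$ determines the pair $(\ell^e,h)$). Goursat's lemma then forces surjection of $H$ onto any finite product of such residual quotients; an easy iteration up the congruence filtration of each $\Sp_{2h}(\O\otimes\Z_\ell)$ then yields $H=\Sp_{2h}(\O\otimes\Zhat_{\geq N})$, which is the large Galois image condition.

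The main obstacle is step one: all the genuinely hard input (Faltings finiteness, Hodge-theoretic constraints, and the classification of algebraic subgroups of $\GSp_{2h}$ acting irreducibly on the Tate module) is packaged into the cited $\ell$-adic open image theorems, and the restrictions on $g$ or on the parity of $h$ are precisely where these arguments are known to succeed. Step two is routine in principle but requires care to ensure the quasi-simplicity and pairwise non-isomorphism of residual quotients --- a mild issue for small $\ell$ or small $h$ involving exceptional isomorphisms, which is absorbed by enlarging $N$.
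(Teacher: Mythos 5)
Your overall strategy is close to the paper's: reduce to the cited $\ell$-adic open image theorems and then assemble the per-$\ell$ surjectivity into the adelic statement. But you miss a necessary preliminary reduction that the paper makes explicitly, and this is a genuine gap. The theorem of Banaszak--Gajda--Kraso\'n you invoke for the case $h$ odd (\cite[Thm.~6.16]{banaszakImage$ell$adicGalois2006}) requires the Zariski closure of $\rho_\ell(G_F)$ in $\GSp_{2h}$ to be connected, and this is not automatic. The paper first passes to a finite extension of~$F$ to force connectedness of these Zariski closures for all~$\ell$ (Serre's connectedness theorem, \cite[§2.5]{serreResumeCoursAu1985}), observing that this only shrinks the image and so is harmless for the desired conclusion. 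Without that reduction you cannot legitimately apply \cite[Thm.~6.16]{banaszakImage$ell$adicGalois2006}, so step one of your argument is incomplete as written.

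For the gluing step, the paper simply passes to a further finite extension over which the $\ell$-adic representations are independent in the sense of \cite[§2.1]{serreResumeCoursAu1985}, and then the adelic inclusion follows immediately from the $\ell$-adic ones; your Goursat argument is essentially reproving a weak form of Serre's independence theorem by hand. That route can be made to work, but be aware of a flaw in your formulation: you claim the residual quotients $\Sp_{2h}(\O/\frakl\O)$ are pairwise non-isomorphic across distinct~$\frakl$, which is false when two primes $\frakl\neq\frakl'$ lie over the same rational prime~$\ell$ with equal residue degree (then $\O/\frakl\O\cong\O/\frakl'\O$). Fortunately this does not matter: the $\ell$-adic theorem already gives surjection onto the entire factor $\Sp_{2h}(\O\otimes\Z_\ell)=\prod_{\frakl|\ell}\Sp_{2h}(\O\otimes_\O\O_\frakl)$, so Goursat is only needed across distinct rational primes~$\ell$, where the non-isomorphism of $\PSp_{2h}(\F_{\ell^e})$ and $\PSp_{2h}(\F_{\ell'^{e'}})$ holds for $\ell\neq\ell'$ as you say. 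If you pursue this route you should reorganize the argument so that Goursat is applied only to the rational-prime decomposition, not the $\frakl$-decomposition.
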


\begin{proof}
    After making a finite extension of~$F$, which only shrinks the image of the Galois representation, we may assume that the Zariski closure $\mathcal{G}_\ell$ of $\rho_\ell(G_F)$ inside $\GSp_{2h}(\Q_\ell)$ is connected for all~$\ell$ \cite[§2.5]{serreResumeCoursAu1985}. After taking another finite extension of~$F$, we may also assume that the $\ell$-adic Galois representations of $A$ are all independent in the sense of \cite[§2.1]{serreResumeCoursAu1985}. The goal is then to prove that $\rho_\ell(G_F)$ contains $\Sp_{2h}(\O\otimes\Z_\ell)$ for large enough~$\ell$. The case $d=1$ is Serre's open image theorem \cite[Thm.~3]{serreResumeCoursAu1985}, while \cite[Thm.~6.16]{banaszakImage$ell$adicGalois2006} covers the cases where $h$ is odd (and can be applied as $\mathcal{G}_\ell$ is connected.)
\end{proof}

In particular, if $\End_{\overline{\Q}}(A) = \O$ and $A$ is either an abelian surface or has odd dimension, then~$A$ has large Galois image.

\section{Counting split matrices in \texorpdfstring{$\GSp_{2h}(\F_q)$}{GSp}}
\label{sec:gsp}

Our goal here is to provide estimates for the cardinality of $\Split{2h}{\F_q}(\lambda_0)$ for $\lambda_0 \in \F_q^\times$. In \cref{sec:distribution}, we will use them with $\F_q = \O/\frakl\O$ when applying the \smash{\v{C}ebotarev} density theorem.

Since $\Split{2h}{\F_q}(\lambda_0)$ is conjugacy-invariant, it is a union of conjugacy classes of $\GSp_{2h}(\F_q)$, and those have been classified: see for instance \cite[Section 6.2]{williams12}. One key element of the classification is the characteristic polynomial, so we start by studying its link with Elkies primes in §\ref{subsec:polElkies}. We use this to count elements in $\Split{2h}{\F_q}(\lambda_0)$, up to negligible terms, in §\ref{subsec:countSplit}.

\subsection{Characteristic polynomials and Elkies primes} \label{subsec:polElkies}

\begin{lemma}
\label{lem:block-triangular}
    Let~$m\in \GSp_{2h}(\F_q)$. Then~$m$ leaves a maximal isotropic subspace of~$\F_q^{2h}$ stable if and only if~$m$ is conjugate in~$\GSp_{2h}(\F_q)$ to a matrix of the form
    \[
    \begin{pmatrix} w & \star \\ 0 & \lambda(m) w^{-\tp} 
    \end{pmatrix}
    \]
    for some $w\in \GL_h(\F_q)$, where $w^{-\tp}$ denotes the inverse transpose of~$w$.
\end{lemma}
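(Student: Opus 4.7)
The plan is to use the transitivity of $\Sp_{2h}(\F_q)$ on Lagrangian subspaces, together with a direct block computation of the symplectic condition.

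For the easy ("if") direction, any matrix of the displayed block form sends the coordinate Lagrangian $L_0 = \F_q^h \oplus \{0\} \subset \F_q^{2h}$ to itself, since the lower-left block is zero; and conjugation preserves the property of stabilizing \emph{some} maximal isotropic subspace.

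For the ("only if") direction, suppose $m\in \GSp_{2h}(\F_q)$ stabilizes a maximal isotropic subspace $L\subset \F_q^{2h}$. I would first recall the standard fact that $\Sp_{2h}(\F_q)$ acts transitively on the set of Lagrangians of $(\F_q^{2h}, J_{2h})$: pick any basis $w_1,\ldots,w_h$ of $L$ and extend it to a symplectic basis $(w_1,\ldots,w_h,w_1^\ast,\ldots,w_h^\ast)$ of $\F_q^{2h}$ (which is possible by nondegeneracy); the change-of-basis matrix lies in $\Sp_{2h}(\F_q)\subset \GSp_{2h}(\F_q)$ and sends $L_0$ to $L$. Call this matrix $g$. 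Then $g^{-1} m g$ stabilizes $L_0$, hence has the block form
\[
g^{-1} m g = \begin{pmatrix} M & B \\ 0 & D \end{pmatrix}
\]
for some $M,D\in \mathrm{Mat}_h(\F_q)$ and $B\in \mathrm{Mat}_h(\F_q)$, with $M,D$ invertible since $m$ is.

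The final step is to exploit the symplectic condition. Writing $n := g^{-1}mg$, we have $\lambda(n) = \lambda(m)$ (since $\lambda(g)=1$), and expanding $n^\tp J_{2h} n = \lambda(m)\, J_{2h}$ in block form gives
\[
\begin{pmatrix} 0 & M^\tp D \\ -D^\tp M & B^\tp D - D^\tp B \end{pmatrix} = \begin{pmatrix} 0 & \lambda(m) I_h \\ -\lambda(m) I_h & 0 \end{pmatrix}.
\]
The top-right block yields $M^\tp D = \lambda(m) I_h$, i.e.\ $D = \lambda(m) M^{-\tp}$, which is exactly the form claimed; the bottom-right block imposes the additional condition that $M^\tp B$ be symmetric, but this is not needed for the statement.

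There is no substantive obstacle here: the main ingredient is transitivity of $\Sp_{2h}(\F_q)$ on Lagrangians, which follows from Witt-style extension of a basis of $L$ to a symplectic basis, and the rest is a one-line matrix computation.
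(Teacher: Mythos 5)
Your proof is correct and follows essentially the same approach as the paper: extend a basis of the stable Lagrangian to a symplectic basis (giving an element of $\Sp_{2h}(\F_q)$ conjugating $m$ to block upper-triangular form), then use the multiplier condition to force the lower-right block to be $\lambda(m)M^{-\tp}$. The only difference is cosmetic — you expand the symplectic condition $n^\tp J_{2h} n = \lambda(m) J_{2h}$ blockwise, whereas the paper simply cites $\lambda(QmQ^{-1})=\lambda(m)$ as forcing the conclusion.
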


\begin{proof}
    Assume that~$m$ stabilizes a maximal isotropic subspace~$V\subset \F_q^{2h}$. Then we can find a symplectic basis of~$\F_q^{2h}$ whose first~$h$ vectors generate $V$, i.e.~we can find $Q\in \Sp_{2h}(\F_q)$ such that
    \[ Q m Q^{-1} =
    \begin{pmatrix} w & \star \\ 0 & w'
    \end{pmatrix}
    \]
    where $w,w'\in \GL_h(\F_q)$. Because~$QmQ^{-1}$ belongs to $\GSp_{2h}(\F_q)$ and~$\lambda(Q m Q^{-1})=\lambda(m)$, we must have $w' = \lambda(m) w^{-\tp}$.
    
    Conversely, assume that~$Q m Q^{-1}$ has the specified form for some $Q\in \GSp_{2h}(\F_q)$. Let~$V$ be the span of the first~$h$ vectors of the canonical basis of~$\F_q^{2h}$. Then~$Q(V)$ is a maximal isotropic subspace of~$\F_q^{2h}$ that is stable under~$m$.
\end{proof}

\begin{definition}
 For a monic polynomial $P \in \F_q[X]$ of degree~$r$ with constant coefficient $a_0 \in \F_q^\times$ and $\lambda_0\in \F_q^\times$, we define the \emph{$\lambda_0$-reciprocal polynomial} of $P$ to be the monic polynomial
    \[\widetilde{P}^{\lambda_0}(X) = \frac{1}{a_0} X^r P\left(\frac{\lambda_0}{X} \right). \]
\end{definition}

\begin{prop} \label{prop:split-implies-PPtilde}
Let~$m\in \GSp_{2h}(\F_q)$, let $\lambda_0 = \lambda(m)$, and let~$\chi_m$ be the characteristic polynomial of~$m$. If~$m$ is split, then there exists $P\in \F_q[X]$ such that $\chi_m = P \widetilde{P}^{\lambda_0}$. 
\end{prop}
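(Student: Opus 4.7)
The plan is to reduce to the block-triangular form provided by the preceding lemma and then recognize the $\lambda_0$-reciprocal polynomial as the characteristic polynomial of the lower-right block.

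First I would apply \cref{lem:block-triangular}: since $m$ is split, there exists $Q \in \GSp_{2h}(\F_q)$ and $M \in \GL_h(\F_q)$ such that
\[
Q m Q^{-1} = \begin{pmatrix} M & \star \\ 0 & \lambda_0 M^{-\tp} \end{pmatrix}.
\]
Conjugation preserves the characteristic polynomial, and for block upper-triangular matrices the characteristic polynomial factors as the product of the two diagonal blocks' characteristic polynomials. Setting $P := \chi_M$, which is monic of degree $h$ with constant coefficient $a_0 = (-1)^h \det(M) \in \F_q^\times$, this gives $\chi_m = P \cdot \chi_{\lambda_0 M^{-\tp}}$.

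The remaining step is to identify $\chi_{\lambda_0 M^{-\tp}}$ with $\widetilde{P}^{\lambda_0}$. Since the characteristic polynomial is invariant under transposition, $\chi_{\lambda_0 M^{-\tp}} = \chi_{\lambda_0 M^{-1}}$. Working over an algebraic closure, if $\mu_1, \ldots, \mu_h$ are the eigenvalues of $M$ (repeated with multiplicity), then $P(X) = \prod_{i=1}^h (X - \mu_i)$ and the eigenvalues of $\lambda_0 M^{-1}$ are $\lambda_0/\mu_i$, so
\[
\chi_{\lambda_0 M^{-1}}(X) = \prod_{i=1}^h (X - \lambda_0/\mu_i).
\]
On the other hand, a direct computation gives
\[
\widetilde{P}^{\lambda_0}(X) = \frac{1}{a_0} X^h \prod_{i=1}^h (\lambda_0/X - \mu_i) = \frac{1}{a_0} \prod_{i=1}^h (-\mu_i)(X - \lambda_0/\mu_i) = \prod_{i=1}^h (X - \lambda_0/\mu_i),
\]
using $a_0 = (-1)^h \prod_i \mu_i$. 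Comparing the two expressions yields $\chi_{\lambda_0 M^{-\tp}} = \widetilde{P}^{\lambda_0}$, and therefore $\chi_m = P \widetilde{P}^{\lambda_0}$ as required.

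There is no genuine obstacle here: the only mildly delicate point is to track the constant coefficient sign carefully in order to see that the factors $(-\mu_i)$ cancel the $1/a_0$ normalization in the definition of $\widetilde{P}^{\lambda_0}$. The argument works verbatim over $\F_q$ (no need to actually pass to the algebraic closure) by noting that the displayed identity between $\chi_{\lambda_0 M^{-1}}$ and $\widetilde{\chi_M}^{\lambda_0}$ is a polynomial identity in the coefficients of $M$ that one can verify over, say, $\overline{\F_q}$ and then descend.
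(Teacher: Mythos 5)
Your argument is correct and follows exactly the same route as the paper: reduce to the block-triangular form of \cref{lem:block-triangular}, take $P = \chi_M$, and identify the characteristic polynomial of $\lambda_0 M^{-\tp}$ with $\widetilde{P}^{\lambda_0}$. The paper simply asserts this last identity without comment, whereas you spell out the eigenvalue computation that verifies it; the substance is identical.
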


\begin{proof}
    We may assume~$m$ is block-triangular as in \cref{lem:block-triangular}. Let~$P$ denote the characteristic polynomial of~$w$. Then the characteristic polynomial of $\lambda(m)w^{-\tp}$ is~$\widetilde{P}^{\lambda_0}$.
\end{proof}

Our next aim is to prove a partial converse to \cref{prop:split-implies-PPtilde} when $\chi_m$ is squarefree.
For a monic polynomial $P(X) = X^n + a_{n-1} X^{n-1} + \ldots + a_0$ of $\F_q[X]$ of degree~$n$, we denote by $c_P$ its companion matrix: $$c_P = \begin{pmatrix}
        0 & \ldots & \ldots & 0 & -a_0 \\
        1 & \ddots &  & \vdots & -a_1 \\
        0 & 1 & \ddots & \vdots & \vdots \\
        \vdots & \ddots & \ddots & 0 & \vdots \\
        0 & \ldots & 0 & 1 & - a_{n-1} \\
    \end{pmatrix}.$$

\begin{prop} \label{prop:parametrizationSeparable}
    Let $\chi \in \F_q[X]$ be a monic squarefree polynomial of degree~$2h$ of the form $\chi(X) = P \widetilde{P}^{\lambda_0}$ with $P \in \F_q[X]$ and $\lambda_0 \in \F_q^\times$. Factor $P = P_1\cdots P_r\in \F_q[X]$ into irreducible polynomials in $\F_q[X]$.
    Then all the elements of $\GSp_{2h}(\F_q)$ whose characteristic polynomial is $\chi$ and multiplier is $\lambda_0$ are conjugate to 
    the matrix
    \[
    \Diag\left(c_{P_1},\ldots,c_{P_r},\lambda_0 c_{P_1}^{-\tp},\ldots,\lambda_0 c_{P_r}^{-\tp}\right)
    = \begin{pmatrix}
    c_{P_1} & & & & & \\
    & \ddots & & & & \\
    & & c_{P_r} & & & \\
    & & & \lambda_0 c_{P_1}^{-\tp} & & \\
    & & & & \ddots & \\
    & & & & & \lambda_0 c_{P_r}^{-\tp} \\ 
\end{pmatrix}.
    \]
    In particular, they form a single conjugacy class in $\GSp_{2h}(\F_q)$.
\end{prop}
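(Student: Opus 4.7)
The plan is to decompose $\F_q^{2h}$ into generalized eigenspaces of $m$ using the separability of $\chi$, exploit the symplectic relation to pin down the pairing between these pieces, and then build an explicit symplectic basis in which $m$ takes the announced block-diagonal form. Since this target depends only on the factorization of $P$ and on $\lambda_0$, it will follow that all matrices in $\GSp_{2h}(\F_q)$ with characteristic polynomial $\chi$ and multiplier $\lambda_0$ lie in a single conjugacy class.

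First, separability of $\chi = P\widetilde{P}^{\lambda_0}$ guarantees that the $2r$ irreducible factors $P_1,\ldots,P_r,\widetilde{P_1}^{\lambda_0},\ldots,\widetilde{P_r}^{\lambda_0}$ are pairwise distinct: any coincidence $P_i = P_j$ with $i\neq j$, $P_i = \widetilde{P_j}^{\lambda_0}$, or $P_i = \widetilde{P_i}^{\lambda_0}$ would yield a repeated factor. Setting $V_i = \ker P_i(m)$ and $V_i' = \ker \widetilde{P_i}^{\lambda_0}(m)$, the separability of $\chi$ also forces the minimal polynomial of $m$ to equal $\chi$, so the primary decomposition (via the Chinese remainder theorem) gives $\F_q^{2h} = \bigoplus_{i=1}^r (V_i \oplus V_i')$, with each summand a cyclic $\F_q[X]$-module of dimension $d_i := \deg P_i$ on which $m$ has the corresponding irreducible characteristic polynomial.

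Next I would use the multiplier identity to describe the symplectic structure. For eigenvectors $v,w$ of $m$ over $\overline{\F_q}$ with eigenvalues $\alpha,\beta$, the identity $\alpha\beta\langle v,w\rangle = \langle mv, mw\rangle = \lambda_0\langle v,w\rangle$ forces $\langle v,w\rangle = 0$ unless $\alpha\beta = \lambda_0$. Since the roots of $\widetilde{P_i}^{\lambda_0}$ are exactly the $\lambda_0/\alpha$ for $\alpha$ a root of $P_i$, and since by the previous step no root is shared between two distinct factors, this orthogonality (transported back to $\F_q$) shows that each $V_i$ and each $V_i'$ is totally isotropic, and that $V_i$ is in perfect duality with $V_i'$ under the symplectic form, non-degeneracy being inherited from the non-degeneracy of the global form together with the orthogonality of the other summands.

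Finally, choose a cyclic vector $v_i\in V_i$ so that $(v_i, mv_i, \ldots, m^{d_i-1}v_i)$ realizes $m|_{V_i}$ as $c_{P_i}$, and take the dual basis $(f_1^{(i)}, \ldots, f_{d_i}^{(i)})$ of $V_i'$ for the perfect pairing $V_i \times V_i' \to \F_q$. A direct recursive calculation starting from the identities $\langle m^{j-1}v_i, mf_k^{(i)}\rangle = \lambda_0\delta_{j,k}$ and using the recursion satisfied by the $m^{j}v_i$ (via $P_i$) shows that the matrix of $m|_{V_i'}$ in this dual basis is exactly $\lambda_0 c_{P_i}^{-\tp}$. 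Concatenating these bases, listing first all $V_i$-vectors and then all $V_i'$-vectors, yields a basis whose Gram matrix is $J_{2h}$ and in which $m$ takes the claimed block-diagonal form; any two such matrices are then conjugate via the corresponding change-of-basis element of $\Sp_{2h}(\F_q)$. The main technical point is this final identification $m|_{V_i'} = \lambda_0 c_{P_i}^{-\tp}$; the rest of the argument is structural, and concludes the proof since the model matrix depends only on $\chi$ and $\lambda_0$.
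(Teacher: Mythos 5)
Your proof is correct and uses the same underlying decomposition as the paper: separate $\F_q^{2h}$ into the totally isotropic subspaces $V_i = \ker P_i(m)$ and $V_i' = \ker\widetilde{P_i}^{\lambda_0}(m)$, observe that $V_i$ pairs perfectly only with $V_i'$, and pick a basis adapted to this decomposition. The differences are in the two key steps. For the orthogonality structure, the paper adapts a lemma of Milnor on isometries as a black box, whereas you give a self-contained argument by diagonalizing over $\overline{\F_q}$ and using the eigenvalue constraint $\alpha\beta = \lambda_0$; this is arguably more illuminating and makes the separability hypothesis do visible work (pairwise distinctness of the $2r$ irreducible factors). For the normal form, the paper first produces an abstract block-diagonal matrix $\Diag(m_1,\ldots,m_r,m_1',\ldots,m_r')$ in a symplectic basis, conjugates each $m_i$ to $c_{P_i}$ via $R_i$, and then uses the multiplier constraint to force $R_i^\tp m_i' R_i^{-\tp} = \lambda_0 c_{P_i}^{-\tp}$ indirectly; you instead take the cyclic basis $(v_i,\ldots,m^{d_i-1}v_i)$ of $V_i$ (which makes $m\vert_{V_i} = c_{P_i}$ immediately) and the dual basis of $V_i'$, and compute $m\vert_{V_i'}$ directly. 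Both are clean; yours is more constructive, at the cost of actually carrying out the recursion. One small slip in your write-up: the displayed identity should read $\langle m^{j-1}v_i, mf_k^{(i)}\rangle = \lambda_0\,\delta_{j-1,k}$ (an index shift), and the case $j=1$ additionally needs the relation $P_i(m)v_i = 0$ to express $m^{-1}v_i$ in the cyclic basis; this is exactly where the companion-matrix-inverse structure of $\lambda_0 c_{P_i}^{-\tp}$ comes from, so it is a typo rather than a gap.
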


\begin{proof}
    Let $m$ be an element of $\GSp_{2h}(\F_q)$ whose characteristic polynomial is $\chi$ and multiplier is $\lambda_0$. Assume that~$m$ is the matrix of an endomorphism $u$ in a symplectic basis~$(e_i)_{1 \leq i \leq 2h}$ of $\F_q^{2h}$. For every $i$, we write $V_i = \ker(P_i(u))$ and $\widetilde{V_i} = \ker(\widetilde{P}_i^{\lambda_0}(u))$, noting that $P_i \ne \widetilde{P_i}^{\lambda_0}$ because $\chi$ is squarefree.
    By adapting directly Lemma~3.1 of \cite{milnorIsometries} in the case where $t \in \GSp_{2h}(\F_q)$, we see that the subspaces $V_i$ and~$\widetilde{V_i}$ are totally isotropic, and that there is an orthogonal decomposition 
    \[
    \bigoplus_{i=1}^r (V_i \oplus \widetilde{V_i}).
    \]
    For every $i$, let $(\alpha_i,\beta_i)$ be a symplectic basis of $V_i\oplus \widetilde{V_i}$; both $\alpha_i$ and $\beta_i$ have length~$\deg(P_i)$. The concatenation $(\alpha_1,\ldots,\alpha_r,\beta_1, \ldots,\beta_r)$ is a symplectic basis of $\F_q^{2h}.$ Calling $Q \in \Sp_{2h}(\F_q)$ the base change matrix from $(e_i)$ to $(\alpha_1,\ldots,\alpha_r,\beta_1,\ldots,\beta_r)$, we have 
    \[
    m = Q \cdot \Diag(m_1,\ldots,m_r,m_1 ',\ldots,m_r ') \cdot Q^{-1}
    \]
    where $m_i,m_i'\in \GL_{\deg(P_i)}(\F_q)$ for all $i$.
    For every $i$, the characteristic polynomial of $m_i$ is $P_i$, so $m_i$ is conjugate to $c_{P_i}$ in $\GL_{\deg(P_i)}(\F_q)$: there is $R_i \in \GL_{\deg(P_i)}(\F_q)$ such that $m_i = R_i c_{P_i} R_i^{-1}$. We define $$R = \Diag\left(R_1,\ldots,R_r,R_1^{-\tp},\ldots,R_r^{-\tp} \right) \in \Sp_{2h}(\F_q)$$ and we have $$ m = Q R  \cdot \Diag\left(c_{P_1},\ldots,c_{P_r},R_1^{\tp} m_1' R_1^{-\tp}, \ldots, R_r^{\tp} m_r R_r^{-\tp} \right) \cdot R^{-1} Q^{-1}.$$ Because $$\Diag\left(c_{P_1},\ldots,c_{P_r},R_1^{\tp} m_1' R_1^{-\tp}, \ldots, R_r^\tp m_r R_r^{-\tp} \right)$$ is in $\GSp_{2h}(\F_q)$ with multiplier $\lambda_0$, we have $R_i^{\tp} m_i ' R_i^{-\tp} = \lambda_0 c_{P_i}^{-\tp} $ for every $i$, so $m$ is conjugate to the block-diagonal matrix specified in the lemma.
\end{proof}

A direct consequence of \cref{prop:parametrizationSeparable}, noting that the block-diagonal matrix specified there is of the form required by \cref{lem:block-triangular}, is that the converse to \cref{prop:split-implies-PPtilde} holds when~$\chi_m$ is squarefree.

In fact, following a referee's kind suggestions, we are able to prove the
stronger result that the converse to \cref{prop:split-implies-PPtilde} holds in
general. This result will not be used in the rest of the paper, so the reader
could directly proceed to~§\ref{subsec:countSplit}, where our counting
arguments rely on \cref{prop:parametrizationSeparable} instead (which does not
hold in the non-squarefree case).

\begin{prop}
\label{prop:elkies-equiv-PPtilde}
    Let~$m\in \GSp_{2h}(\F_q)$, and let~$\lambda_0 = \lambda(m)$. Then $m\in \Split{2h}{\F_q}(\lambda_0)$ if and only if its characteristic polynomial~$\chi_m$ factors as~$\chi_m= P\widetilde{P}^{\lambda_0}$ for some~$P\in \F_q[X]$.
\end{prop}

We start with a lemma.

\begin{lemma}
\label{lem:exists-isotropic-subspace}
    Let~$m\in \GSp_{2h}(\F_q)$, let~$\lambda_0 = \lambda(m)$, and assume that $\chi_m$ is of the form~$R^h$ where~$R$ is monic, irreducible of degree~$2$ and satisfies $\widetilde{R}^{\lambda_0} = R$. Assume that~$h\geq 2$. Then there exists a $2$-dimensional isotropic subspace $V\subset \F_q^{2h}$ stable under~$m$.
\end{lemma}

\begin{proof}
We prove this lemma by induction on~$h$. In any case, considering the Jordan decomposition of~$m$, there always exist a $2$-dimensional subspace $V\subset \F_q^{2h}$ that is stable under~$m$, and on which the characteristic polynomial of~$m$ is~$R$. Because $m\in \GSp_{2h}(\F_q)$, it is easy to see that $m$ also stabilizes the orthogonal complement~$V^\perp$ of~$V$. We may assume that~$V$ is not isotropic, in other words $V\cap V^\perp = \{0\}$. Since the symplectic form is nondegenerate, we have $\dim V + \dim V^\perp = 2h$, which implies that we have an orthogonal decomposition
\begin{displaymath}
    \F_q^{2h} = V \oplus V^\perp.
\end{displaymath}

If~$h > 2$, then by induction there exists a 2-dimensional isotropic subspace $W\subset V^\perp$, so we are done. Assume now $h=2$, and write $R = (X-\mu)(X-\lambda_0/\mu)$ for some $\mu\in \F_{q^2}\setminus\F_q$. Extending scalars to~$\F_{q^2}$, we have a direct sum decomposition
\begin{displaymath}
V = V_1 \oplus W_1 \quad\text{and}\quad V^\perp = V_2\oplus W_2
\end{displaymath}
where the~$V_i$ (resp.~$W_i$) for~$i=1,2$ are generated by eigenvectors of~$m$ for the eigenvalue~$\mu$ (resp.~$\lambda_0/\mu$). Let~$v_1,v_2$ be generators of~$V_1$ and~$V_2$ respectively, and let~$w_1,w_2$ be their Galois conjugates, which generate~$W_1$ and~$W_2$ respectively. Denoting the alternating form by $\langle \cdot,\cdot\rangle$ and the generator of $\Gal(\F_{q^2}/\F_q)$ by $\sigma$, we have
\begin{displaymath}
    \sigma(\langle v_1,w_1\rangle) = \langle w_1, v_1 \rangle = - \langle v_1,w_1\rangle,
\end{displaymath}
and similarly for~$v_2,w_2$. Hence
\begin{displaymath}
    -\frac{\langle v_1,w_1 \rangle}{\langle v_2,w_2\rangle} \in \F_q^\times.
\end{displaymath}
Let~$\alpha\in \F_{q^2}^\times$ be an element with this specific norm, and define~$W$ as the $\F_{q^2}$-span of the vectors~$v_1 + \alpha v_2$ and $w_1 + \overline{\alpha} w_2$, where the bar denotes conjugation. Then we directly check that~$W$ is isotropic and descends to~$\F_q$.
\end{proof}

For an alternative proof of \cref{lem:exists-isotropic-subspace} in the case $h=2$ (at least when~$q$ is an odd prime), we can use the classification of conjugacy classes in~$\GSp_4(\F_q)$ from \cite[Section 6.2]{williams12}. In particular, each class whose characteristic polynomial splits as $P \widetilde{P}^{\lambda_0}$ admits a representative of the form \[
    m = \begin{pmatrix} w & \star \\ 0 & \lambda(m) w^{-\tp} 
    \end{pmatrix}.  \qedhere\]

\begin{proof}[Proof of \cref{prop:elkies-equiv-PPtilde}]
By assumption, $\chi_m$ factors over~$\F_q[X]$ as
\begin{displaymath}
    \chi_m = \prod_{i=1}^{n_1} \bigl(P_i \widetilde{P}_i^{\lambda_0} \bigr)^{a_i} \cdot \prod_{j=1}^{n_2} Q_j^{2b_j}
\end{displaymath}
where the polynomials~$P_i$, $\widetilde{P}_i^{\lambda_0}$ and $Q_j$ are coprime, the exponents $a_i$ and $b_j$ are integers, and $\widetilde{Q}_j^{\lambda_0} = Q_j$ for each~$1\leq j\leq n_2$. We make the following reductions.
\begin{enumerate}
    \item \emph{We can assume~$n_1=0$ and $n_2 = 1$.} To see this, define $V_i$ (resp.~$V_i'$) as the primary subspace for~$m$ relative to~$P_i$ (resp.~$\widetilde{P}_i^{\lambda_0}$) for each~$1\leq i\leq n_1$, and define~$W_j$ as the primary subspace for~$m$ relative to~$Q_j$ for each~$1\leq j\leq n_2$. As in \cite[Lemma 3.1]{milnorIsometries}, we have
    \begin{displaymath}
        \F_q^{2h} = \bigoplus_{i=1}^{n_1} (V_i\oplus V_i') \overset{\perp}\oplus \bigoplus_{j=1}^{n_2} W_j
    \end{displaymath}
    where the two big direct sums are also orthogonal. Moreover, the spaces~$V_i$ and~$V_i'$ are isotropic. If the result holds when~$n_1=0$ and~$n_2=1$, then one can construct a maximal isotropic subspace~$W_j'\subset W_j$ stable under~$m$ for each~$j$. Then, the direct sum
    \begin{displaymath}
        \bigoplus_{i=1}^{n_1} V_i \oplus \bigoplus_{j=1}^{n_2} W_j'
    \end{displaymath}
    is maximal isotropic in~$\F_q^{2h}$ and stable under~$m$.

    From now on, we assume~$\chi_m = Q^{2b}$ where~$Q\in \F_q[X]$ is irreducible and satisfies~$\widetilde{Q}^{\lambda_0} = Q$. The map $\mu\mapsto \lambda_0/\mu$ is an involution of the roots of~$Q$ in an algebraic closure of~$\F_q$.

    \item \emph{We can assume that this involution has no fixed points.} Otherwise, since~$Q$ is irreducible, we actually have~$Q = X \pm \mu_0$ where~$\mu_0\in \F_q^\times$ is a square root of~$\lambda_0$. Considering the Jordan decomposition of~$m$, we only need to prove that any unipotent element in~$\GSp_{2h}(\F_q)$ stabilizes a maximal isotropic subspace. This is a well-known fact that is easy to prove by induction on the dimension.

    From now on, we assume that the involution $\mu\mapsto \lambda_0/\mu$ has no fixed points among the roots of~$Q$. This implies that~$Q$ has even degree $2r$, and that there exists an irreducible polynomial $R\in \F_q[X]$ of degree~$r$ that is the minimal polynomial of~$\mu + \lambda_0/\mu$ over~$\F_q$ for each root~$\mu$ of $Q$.

    \item \emph{We can assume that~$Q$ has degree~$2$.} Indeed, working over~$\F_{q^r}$ (seen as the splitting field of~$R$), we can factor~$Q$ as
    \begin{displaymath}
        Q = \prod_{\sigma\in \mathrm{Gal}(\F_{q^r}/\F_q)} \sigma(Q'),
    \end{displaymath}
    where~$Q' = X^2 - (\mu + \lambda_0/\mu) X + \lambda_0 \in \F_{q^r}[X]$ is irreducible of degree~$2$. Because the polynomials~$\sigma(Q')$ for $\sigma\in \mathrm{Gal}(\F_{q^r}/\F_q)$ are coprime, another application of \cite[Lemma 3.1]{milnorIsometries} yields an orthogonal decomposition
    \begin{displaymath}
        \F_{q^r}^{2h} = \bigoplus_{\sigma\in \mathrm{Gal}(\F_{q^r}/\F_q)} \ker\bigl(\sigma(Q')^{2b}(m)\bigr).
    \end{displaymath}
    If we are able to construct a maximal isotropic subspace~$V'\subset \ker\bigl(\sigma(Q')^{2b}(m)\bigr)$ defined over~$\F_{q^r}$ and stable under~$m$, then the subspace
    \begin{displaymath}
        V = \bigoplus_{\sigma\in \mathrm{Gal}(\F_{q^r}/\F_q)} \sigma(V')
    \end{displaymath}
    descends to~$\F_q$ and yields a maximal isotropic subspace of~$\F_q^{2h}$ stable under~$m$.
\end{enumerate}

After these reductions, we can assume that $h = 2b$ is even, and that $\chi_m = Q^{2b}$ where $Q\in \F_q[X]$ is irreducible of degree~$2$ and satisfies $\widetilde{Q}^{\lambda_0} = Q$. We construct a maximal isotropic subspace $V\subset \F_q^{2h}$ stable under~$m$ by induction. By \cref{lem:exists-isotropic-subspace}, we can construct a $2$-dimensional isotropic subspace $W\subset \F_q^{2h}$ stable under~$m$. If~$h=2$, we are done with~$V=W$. Otherwise, the quotient space $W^\perp/W$ carries an induced alternating form which is nondegenerate, and~$m$ induces an endomorphism of that space. By the induction hypothesis on~$h-2$, there exists a maximal isotropic subspace $V'\subset W^\perp/W$ stable under (the endomorphism induced by)~$m$. We let~$V$ be the preimage of~$V'$ under the quotient map~$W^\perp\to W^\perp/W$, which is maximal isotropic for dimension reasons.
\end{proof}

\subsection{Estimating the size of \texorpdfstring{$\Split{2h}{\F_q}(\lambda_0)$}{S2h}} \label{subsec:countSplit}

We recall that $$\# \GSp_{2h}(\F_q) = (q-1) \cdot \#\Sp_{2h}(\F_q) = (q-1) \cdot q^{h^2} \cdot \prod\limits_{i=1}^{h} (q^{2i}-1) = q^{2h^2 + h+1} + O(q^{2h^2 + h}).$$ In the following, we will write $f(h) = 2h^2 + h +1$. As in \cref{thm:main-cv}, we set
\[
\alpha_h = \sum\limits_{(d_1,\ldots,d_r) \in \Sigma_h} \frac{1}{2^r} \cdot \prod\limits_{i=1}^r \frac{1}{d_i} \cdot \prod\limits_{k=1}^h \frac{1}{\#\{j \; : \; d_j = k \} ! }.
\]
The main result in this subsection is the following. Recall that the notation $O_h$ means that the implied constants are allowed to depend on~$h$, but not on~$\lambda_0$.

\begin{prop} \label{prop:sizeSplit}
    We have $\# \Split{2h}{\F_q}(\lambda_0) = \alpha_h q^{f(h)-1} + O_h\left( q^{f(h)-2} \right).$ 
\end{prop}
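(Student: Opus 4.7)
The strategy is to reduce the count of $\Split{2h}{\F_q}(\lambda_0)$ to a count over admissible characteristic polynomials via the results of~§\ref{subsec:polElkies}, and then to organize the resulting sum by factorization type. First, I would restrict attention to matrices $m\in \GSp_{2h}(\F_q)$ with multiplier $\lambda_0$ whose characteristic polynomial $\chi_m$ is separable. Such matrices with inseparable $\chi_m$ lie on a subvariety of codimension at least one, and a direct estimate on the number of inseparable monic polynomials of degree $2h$, together with crude bounds on the sizes of the corresponding conjugacy classes, shows that they contribute $O_h(q^{f(h)-2})$, which is absorbed in the error term. On the separable locus, \cref{prop:split-implies-PPtilde} and \cref{prop:elkies-implies-split} together give that $m$ is split if and only if $\chi_m = P\widetilde{P}^{\lambda_0}$ for some monic $P\in \F_q[X]$ of degree~$h$.

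For each such admissible $\chi$, \cref{prop:parametrizationSeparable} says that the set of $m\in \GSp_{2h}(\F_q)$ with characteristic polynomial $\chi$ and multiplier $\lambda_0$ forms a single conjugacy class, whose size I would compute via the orbit-stabilizer theorem. Writing $\chi = P_1\cdots P_r\cdot \widetilde{P_1}^{\lambda_0}\cdots \widetilde{P_r}^{\lambda_0}$ with the $P_i$ distinct irreducibles of degrees $d_i$ and $P_i\neq\widetilde{P_j}^{\lambda_0}$, the key computation is that the centralizer of the model representative of \cref{prop:parametrizationSeparable} in $\GSp_{2h}(\F_q)$ is a torus of order $(q-1)\prod_i (q^{d_i}-1)$: the factor $q-1$ encodes freedom in choosing the multiplier, while each $q^{d_i}-1$ parametrizes scaling by units of $\F_{q^{d_i}}$ acting on an invariant block $V_i$, the action on the paired block $W_i$ being then forced by the symplectic form. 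This yields a conjugacy class of size $\#\Sp_{2h}(\F_q)/\prod_i(q^{d_i}-1)$.

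Finally, I would enumerate admissible $\chi$ by factorization type $(d_1,\ldots,d_r)\in\Sigma_h$. By the prime polynomial theorem for $\F_q[X]$, the number of monic $P$ of degree $h$ factoring into distinct irreducibles of degrees $d_1,\ldots,d_r$ is $\prod_i I_{d_i}/\prod_k\#\{j:d_j=k\}!$ up to lower-order terms, where $I_d = q^d/d + O(q^{d/2})$ is the number of monic irreducibles of degree $d$ over~$\F_q$. Each admissible $\chi$ arises from exactly $2^r$ such choices of~$P$, namely one for each way of swapping $P_i\leftrightarrow\widetilde{P_i}^{\lambda_0}$ within its pair, which produces the factor $1/2^r$. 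Combining with the conjugacy class size and using $I_{d_i}/(q^{d_i}-1) = 1/d_i + O_h(q^{-1})$, the contributions assemble into $\alpha_h\cdot \#\Sp_{2h}(\F_q) = \alpha_h q^{f(h)-1}+O_h(q^{f(h)-2})$, as claimed. The main obstacle is the centralizer computation, where one must carefully track how the symplectic pairing couples the action on $V_i$ with the action on $W_i$; the remaining ingredients---the bound on the inseparable locus, the polynomial count, and the final combinatorial identity---are routine once this description is in hand.
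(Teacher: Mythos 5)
Your proposal matches the paper's proof essentially step for step: split off the inseparable locus with a codimension-two dimension count (the paper packages this via Lang--Weil applied to the subvariety $\{\Delta=0,\lambda=\lambda_0\}$ of $\GSp_{2h}$), use \cref{prop:parametrizationSeparable} to see that each admissible separable $\chi=P\widetilde{P}^{\lambda_0}$ gives a single conjugacy class, compute the centralizer order $(q-1)\prod_i(q^{d_i}-1)$ via the companion-matrix commutant (the paper's \cref{CommuteCompanion} and \cref{lem:cardinalityConjClass}), and then count admissible $\chi$ by factorization type with multiplicities $2^r$ (your unordered count) or $2^r\prod_k\#\{j:d_j=k\}!$ (the paper's ordered count), which agree in the end. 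The only thing you glossed over, and which the paper checks explicitly, is that when counting $P_1\cdots P_r$ one must also discard the $O(q^{d_i-1})$ irreducibles with $P_i=\widetilde{P_i}^{\lambda_0}$; this is a lower-order correction absorbed in $O_h(q^{f(h)-2})$, so the argument is sound.
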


Let $\Split{2h}{\F_q}^{\sqf}(\lambda_0)$ (resp. $\Split{2h}{\F_q}^{\nsqf}(\lambda_0)$) be the set of elements $m \in \Split{2h}{\F_q}(\lambda_0)$ such that $\chi_m$ is squarefree (resp.~not squarefree). We obviously have 
\[
\Split{2h}{\F_q}(\lambda_0) = \Split{2h}{\F_q}^{\sqf}(\lambda_0)\sqcup \Split{2h}{\F_q}^{\nsqf}(\lambda_0),
\]
and we will count elements in each subset separately.

\begin{lemma} \label{lem:insepMuFixed}
     We have $
     \# \Split{2h}{\F_q}^{\nsqf}(\lambda_0)  = O_h(q^{f(h)-2}).
     $
\end{lemma}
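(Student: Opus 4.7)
The plan is to decouple the count of $\Split{2h}{\F_q}^{insep}(\lambda_0)$ into two independent steps: bounding the number of \emph{inseparable characteristic polynomials} that can occur for matrices in $\GSp_{2h}(\F_q;\{\lambda_0\})$, and bounding the number of matrices realizing each such polynomial. Since $\Split{2h}{\F_q}^{insep}(\lambda_0)\subseteq\GSp_{2h}(\F_q;\{\lambda_0\})$, the ``split'' condition is not actually needed for the bound; the inseparability alone will cost a factor of $q$.

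First I parametrize the possible characteristic polynomials. For $m\in\GSp_{2h}(\F_q;\{\lambda_0\})$, the relation $m^{\tp}J_{2h}m=\lambda_0 J_{2h}$ together with $\det(m)=\lambda_0^{h}$ forces the self-reciprocity $\chi_m=\widetilde{\chi_m}^{\lambda_0}$. Writing $\chi_m(X)=X^{2h}+c_{2h-1}X^{2h-1}+\cdots+c_0$, this becomes $c_{2h-k}=\lambda_0^{h-k}c_k$, so $\chi_m$ is determined by the $h$ free coefficients $c_1,\ldots,c_h$, giving at most $q^{h}$ possible characteristic polynomials. The discriminant of $\chi$, viewed as a polynomial in $(c_1,\ldots,c_h)$ with $\lambda_0$ fixed, has total degree bounded in terms of $h$ alone, and is not identically zero (a diagonal matrix $\mathrm{Diag}(x_1,\ldots,x_h,\lambda_0/x_1,\ldots,\lambda_0/x_h)$ with sufficiently generic $x_i$ lies in $\GSp_{2h}(\F_q;\{\lambda_0\})$ and has separable characteristic polynomial). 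The elementary bound on zeros of a nonzero polynomial in $h$ variables over $\F_q$ then yields at most $O_h(q^{h-1})$ inseparable characteristic polynomials.

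For the second step, I bound $\#\{m\in\GSp_{2h}(\F_q;\{\lambda_0\}):\chi_m=\chi\}$ uniformly in $\chi$ by $O_h(q^{2h^2})$. This set is conjugation-invariant, hence a union of conjugacy classes; the number of such classes is bounded by a constant depending only on $h$, since they correspond to the finitely many Jordan/partition data compatible with the factorization of $\chi$ subject to symplectic constraints. Each class has dimension at most $f(h)-(h+1)=2h^2$, because the rank of $\GSp_{2h}$ is $h+1$ and centralizers in a reductive group have dimension at least the rank; a standard count over $\F_q$ then gives $O_h(q^{2h^2})$ points per class. Combining yields
\[
\#\Split{2h}{\F_q}^{insep}(\lambda_0)\ \leq\ O_h(q^{h-1})\cdot O_h(q^{2h^2})\ =\ O_h(q^{f(h)-2}).
\]

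The main obstacle I foresee is the uniform conjugacy-class bound: for highly degenerate $m$ (nilpotent part with large Jordan blocks, or with multiple Frobenius-conjugate eigenvalues over $\overline{\F_q}$), one must verify that centralizers in $\GSp_{2h}$ still have dimension at least the rank and that the number of $\F_q$-rational classes above a given geometric class is bounded in $h$. These facts are standard from the theory of reductive groups, but an elementary alternative is to use \cref{lem:block-triangular} to write any $m$ in the larger set, up to conjugation, in block-triangular form $\bigl(\begin{smallmatrix} M & N \\ 0 & \lambda_0 M^{-\tp}\end{smallmatrix}\bigr)$ and count the triples $(M,N,\text{conjugator})$ modulo the stabilizer of the isotropic flag; this produces the same $O_h(q^{f(h)-2})$ bound using only linear algebra.
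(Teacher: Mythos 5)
Your proposal is correct but follows a genuinely different route from the paper. The paper treats the inseparable locus as a subvariety of $\GSp_{2h}$: it observes that the vanishing locus of the discriminant $\Delta$ of the characteristic polynomial is a proper hypersurface on which the multiplier $\lambda$ remains surjective (witnessed by the scalar matrices $\sqrt{\lambda_1}\,\Id_{2h}$), so that $\{\Delta=0\}\cap\lambda^{-1}(\lambda_0)$ has dimension $f(h)-2$ and degree bounded independently of $\lambda_0$; the Lang--Weil estimate then gives $O_h(q^{f(h)-2})$ points in one stroke. You instead decouple: first you bound the number of inseparable characteristic polynomials by $O_h(q^{h-1})$ (self-reciprocity cuts the parameter space down to $h$ coefficients, and the discriminant is a nonzero polynomial of bounded degree in those), then you bound the number of matrices realizing a fixed polynomial by $O_h(q^{2h^2})$ via boundedly many conjugacy classes, each with centralizer of dimension at least the rank $h+1$. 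Both approaches actually prove the stronger statement for all of $\GSp_{2h}^{\mathrm{insep}}(\F_q;\{\lambda_0\})$, not just the split part. The paper's Lang--Weil argument has fewer moving parts; yours replaces one non-elementary input (Lang--Weil) with two others (boundedness of the number of conjugacy classes above a given characteristic polynomial, and the centralizer-dimension lower bound for reductive groups), and you rightly flag these as the delicate points. Your ``elementary alternative'' via \cref{lem:block-triangular} and the parabolic stabilizer is sound in outline and would avoid those group-theoretic inputs, but as written it is only a sketch; note also that it genuinely uses the split condition, whereas your main argument (like the paper's) does not.

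Two small corrections: the self-reciprocity relation should read $c_{2h-k}=\lambda_0^{k-h}c_k$ (using $c_0=\det(m)=\lambda_0^h$), not $\lambda_0^{h-k}c_k$; this is a typo and does not affect the count. Also, when you pass from ``$\dim Z_{\GSp}(m)\geq h+1$'' to ``each class has $O_h(q^{2h^2})$ points,'' you should spell out that $\#Z_{\GSp}(m)(\F_q)\gg_h q^{h+1}$ for $q$ large (e.g.\ via Lang--Weil applied to the identity component of the centralizer), with small $q$ handled trivially; the centralizer can be disconnected, so this lower bound on $\F_q$-points is not completely immediate from the dimension bound.
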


\begin{proof}
    Define $\GSp_{2h}^{\nsqf}(\F_q;\{\lambda_0\})$ as the set of elements of $\GSp_{2h}(\F_q)$ of multiplier~$\lambda_0$ and whose characteristic polynomial is not squarefree. We will in fact prove the stronger claim
    \[
        \#\GSp_{2h}^{\nsqf}(\F_q;\{\lambda_0\}) = O_h(q^{f(h)-2}).
    \]
    To this end, we wish to view $\GSp_{2h}^{\nsqf}(\F_q;\{\lambda_0\})$ as the set of $\F_q$-points of a certain variety. Let $\Delta : \GSp_{2h} \to \mathbb{A}^1$ be the morphism which maps $m$ to the discriminant of its characteristic polynomial. The points $m \in \GSp_{2h}$ for which $\Delta(m) = 0$ are precisely the elements whose characteristic polynomial is not squarefree. Moreover, the restriction of the morphism $\lambda: \GSp_{2h} \to \Gm$ to elements $m$ for which $\Delta(m) = 0$ is surjective on geometric points (hence as a map of schemes): indeed, if~$\lambda_1$ is a point of~$\Gm$, then~$\lambda(\sqrt{\lambda_1} \id_{2h}) = \lambda_1.$ Thus, the set of points of $\GSp_{2h}$ of multiplier $\lambda_0$ and whose characteristic polynomial is not squarefree is a subvariety of $\GSp_{2h}$ of dimension $\dim(\GSp_{2h})-2 = f(h)-2$, defined by polynomial equations whose degrees are independent of~$\lambda_0$.
    
    In \cite{lang54}, Lang and Weil prove that the number of points defined over $\F_q$ of a variety of dimension $r$ is $O(q^{r})$, where the implicit constant only depends on the dimension and the degree of the variety. We conclude that $\# \GSp_{2h}^{\nsqf}(\F_q;\lambda_0) = O_h(q^{f(h)-2})$.
\end{proof}

We now estimate the size of $\Split{2h}{\F_q}^{\sqf}(\lambda_0).$ 
For a partition $(d_1,\ldots,d_r)$ of the integer $h$ such that $d_1 \leq \ldots \leq d_r$, we denote by $D_{(d_1,\ldots,d_r)}(\lambda_0)$ the set of conjugacy classes contained in $\GSp_{2h}(\F_q)(\lambda_0)$ whose characteristic polynomial is squarefree and factors as 
\[
P_1 \cdots P_r \cdot \widetilde{P}_1^{\lambda_0} \cdots \widetilde{P}_r^{\lambda_0}
\]
where~$P_i\in \F_q[X]$ is irreducible of degree~$d_i$ for every $i \in \{1,\ldots,r \}.$ By \cref{prop:parametrizationSeparable}, the set $D_{(d_1,\ldots,d_r)}(\lambda_0)$ is in one-to-one correspondence with those characteristic polynomials. Moreover, $\Split{2h}{\F_p}^{\sqf}(\lambda_0)$ is the union of all elements of $D_{(d_1,\ldots,d_r)}(\lambda_0)$ as $(d_1,\ldots,d_r)$ runs through partitions of~$h$. 

We first show that the conjugacy classes of $D_{(d_1,\ldots,d_r)}(\lambda_0)$ all have the same size. Recall that for a element $m \in \GSp_{2h}(\F_q)$, the number of elements conjugate to $m$ is $$\#\GSp_{2h}(\F_q)/\#C(m)$$ where $C(m)$ is the centralizer of $m$.

\begin{lemma} \label{CommuteCompanion}
    Let $P(X) = X^n + a_{n-1} X^{n-1} + \ldots + a_0$ be a monic irreducible polynomial of $\F_q[X]$ of degree~$n$. The number of elements $m \in \GL_n(\F_q)$ which commute with the companion matrix $c_P$ is equal to $q^{n} - 1.$ 
\end{lemma}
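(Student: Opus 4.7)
The plan is to identify the centralizer of $c_P$ inside the full matrix algebra $M_n(\F_q)$ and then restrict to invertible elements.

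First I would observe that $c_P$ is a \emph{cyclic} matrix, i.e.~the first standard basis vector $e_1 \in \F_q^n$ satisfies $c_P^i e_1 = e_{i+1}$ for $0\leq i \leq n-1$, so $(e_1, c_P e_1, \ldots, c_P^{n-1} e_1)$ is a basis of $\F_q^n$. In particular, the minimal polynomial of $c_P$ is of degree at least $n$, and since it divides the characteristic polynomial $P$ of degree $n$, the minimal polynomial of $c_P$ equals $P$.

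Next I would prove that the centralizer $C(c_P)$ of $c_P$ inside $M_n(\F_q)$ equals $\F_q[c_P]$. The inclusion $\F_q[c_P]\subset C(c_P)$ is automatic. For the reverse inclusion, take $M\in M_n(\F_q)$ commuting with $c_P$ and write $M e_1 = Q(c_P) e_1$ for a unique polynomial $Q\in \F_q[X]$ of degree less than $n$ (using that $e_1$ is a cyclic vector). Then for every $i$,
\[
M e_{i+1} = M c_P^i e_1 = c_P^i M e_1 = c_P^i Q(c_P) e_1 = Q(c_P) c_P^i e_1 = Q(c_P) e_{i+1},
\]
so $M = Q(c_P) \in \F_q[c_P]$.

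Since $P$ is the minimal polynomial of $c_P$, evaluation at $c_P$ induces an isomorphism of $\F_q$-algebras $\F_q[X]/(P) \xrightarrow{\sim} \F_q[c_P]$. Because $P$ is irreducible of degree $n$, the quotient $\F_q[X]/(P)$ is the field $\F_{q^n}$, so $C(c_P)\cong \F_{q^n}$. Finally, the matrices in $C(c_P)$ that lie in $\GL_n(\F_q)$ are precisely the ones corresponding to nonzero elements of $\F_{q^n}$ (since in a field, nonzero elements are units, and conversely a noninvertible matrix cannot be a unit in $C(c_P)$), giving $q^n - 1$ elements. There is no substantial obstacle here; the only point that requires care is the cyclic-vector argument identifying $C(c_P)$ with $\F_q[c_P]$.
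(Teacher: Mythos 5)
Your proof is correct and follows essentially the same cyclic-vector approach as the paper: both exploit that a cyclic vector for $c_P$ determines a centralizing matrix uniquely. Your version is slightly more complete in that you explicitly identify $C(c_P)$ with $\F_q[c_P]\cong\F_{q^n}$ and thereby establish the surjectivity of the map $v\mapsto v(e_1)$ onto $\F_q^n$, a point the paper's proof leaves implicit.
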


\begin{proof}
    Let $u$ be the endomorphism of $\F_q^n$ associated to the matrix $c_P$. Then, for every nonzero $x \in \F_q ^n$, the family $(x,u(x),\ldots,u^{n-1}(x))$ is a basis of $\F_q^n$ because $P$ is irreducible. An element $v$ in $C(u)$ is determined by $v(x)$ since for every $i \in \{0,\ldots,n-1\}$, we have $v(u^{i}(x)) = u^{i}(v(x))$. If $v(x) \ne 0$, then $v$ maps the basis $(x,u(x),\ldots,u^{n-1}(x))$ to the basis $(v(x),u(v(x)),\ldots,u^{n-1}(u(x))),$ so $v$ is invertible. Therefore, the elements $m \in \GL_n(\F_q)$ commuting with $c_P$ are in one-to-one correspondence with the nonzero elements of~$\F_q ^n$.
\end{proof}

\begin{lemma} \label{lem:cardinalityConjClass}
    With the above notation, the cardinality of each element of $D_{(d_1,\ldots,d_r)}(\lambda_0)$ is
    \[
    \frac{\#\GSp_{2h}(\F_q)}{(q-1) \prod\limits_{i=1}^r (q^{d_i}-1)}.
    \]
\end{lemma}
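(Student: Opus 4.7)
The plan is to compute the cardinality of a single conjugacy class in $\GSp_{2h}(\F_q)$ via the orbit--stabilizer formula. By \cref{prop:parametrizationSeparable}, each element of $D_{(d_1,\ldots,d_r)}(\lambda_0)$ is the conjugacy class of the block-diagonal representative
\[
m_0 := \Diag\bigl(c_{P_1},\ldots,c_{P_r},\lambda_0 c_{P_1}^{-\tp},\ldots,\lambda_0 c_{P_r}^{-\tp}\bigr),
\]
so its cardinality equals $\#\GSp_{2h}(\F_q)/\#C(m_0)$, where $C(m_0)$ denotes the centralizer of $m_0$ in $\GSp_{2h}(\F_q)$. The entire proof reduces to showing that $\#C(m_0) = (q-1)\prod_{i=1}^r(q^{d_i}-1)$.

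First I would describe the centralizer of $m_0$ in the larger group $\GL_{2h}(\F_q)$. The characteristic polynomial $\chi_{m_0} = P_1\cdots P_r \cdot \widetilde P_1^{\lambda_0}\cdots \widetilde P_r^{\lambda_0}$ is separable by assumption, so its $2r$ irreducible factors are pairwise coprime. Hence $\F_q^{2h}$ decomposes canonically into the $2r$ kernels of these factors acting via $m_0$, and any element commuting with $m_0$ must preserve this primary decomposition. In the adapted basis this forces such an element to be block-diagonal in the fine $(d_1+\cdots+d_r+d_1+\cdots+d_r)$ structure, with each block commuting with the corresponding $c_{P_i}$ or $\lambda_0 c_{P_i}^{-\tp}$. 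Applying \cref{CommuteCompanion} to each of the $2r$ blocks shows that the $\GL_{2h}$-centralizer of $m_0$ has order $\prod_{i=1}^r(q^{d_i}-1)^2$.

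Next I would impose the symplectic condition. Writing such a commuting element as $g = \begin{pmatrix} X & 0 \\ 0 & W \end{pmatrix}$ in $(h+h)$-block form (with $X$ and $W$ themselves block-diagonal in $r$ blocks), a direct calculation using the standard form $J_{2h}$ shows that $g\in\GSp_{2h}(\F_q)$ with multiplier $\mu\in\F_q^\times$ if and only if $X^\tp W = \mu I_h$, i.e.\ $W = \mu X^{-\tp}$. A quick check then verifies that if $X_i \in \F_q[c_{P_i}]^\times$, then the block $\mu X_i^{-\tp}$ of $W$ automatically commutes with $\lambda_0 c_{P_i}^{-\tp}$, so no further constraint arises on $W$. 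Therefore $C(m_0)$ is parametrized freely by the pair $(X,\mu)$ with $X$ ranging over $\prod_{i=1}^r \F_q[c_{P_i}]^\times$ and $\mu$ ranging over $\F_q^\times$, yielding $\#C(m_0) = (q-1)\prod_{i=1}^r(q^{d_i}-1)$ as claimed.

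The only delicate point is the first step: the argument that a commuting element must decompose along the fine $2r$-block structure rather than merely the coarse $(h+h)$ one coming from the definition of $m_0$. This is precisely where the separability hypothesis of \cref{prop:parametrizationSeparable} is used; once it is in place, the symplectic computation in the second step is essentially mechanical.
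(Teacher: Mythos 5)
Your proof is correct and takes essentially the same route as the paper's: identify the single conjugacy class via \cref{prop:parametrizationSeparable}, compute the centralizer of the block-diagonal representative as block-diagonal matrices, invoke \cref{CommuteCompanion}, and divide. You make the symplectic constraint explicit, deriving $W=\mu X^{-\tp}$ from $g^\tp J_{2h}g=\mu J_{2h}$ and checking that this block automatically commutes with $\lambda_0 c_{P_i}^{-\tp}$, whereas the paper states the form of the centralizer directly; this is a welcome expansion, not a different method. One small gloss: \cref{CommuteCompanion} as stated applies to companion matrices, and $\lambda_0 c_{P_i}^{-\tp}$ is not literally one, but it has irreducible characteristic polynomial $\widetilde{P}_i^{\lambda_0}$ and is therefore conjugate to $c_{\widetilde{P}_i^{\lambda_0}}$, so its centralizer order is still $q^{d_i}-1$; in any case this count is not actually needed once you observe that $W$ is determined by $X$ and $\mu$.
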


\begin{proof}
    By \cref{prop:parametrizationSeparable}, a representative of the class is the block-diagonal matrix
    \[ m = \Diag\left(c_{P_1},\ldots,c_{P_r},\lambda_0 c_{P_1}^{-\tp},\ldots,\lambda_0 c_{P_r}^{-\tp}\right).
    \]
    Matrices in $C(m)$ preserve the invariant subspaces of $m$, so they are also block-diagonal of the form 
    \[
    \Diag \left(
    N_1,\ldots, N_r, \lambda' N_1^{-\tp}, \ldots,\lambda' N_r^{-\tp} \right)
    \]
    where $N_i$ commutes with $c_{P_i}$ for every $i$. By Lemma \ref{CommuteCompanion}, the number of elements $N_i$ in~$\GL_{d_i}(\F_q)$ which commute with $c_{P_i}$ is $q^{d_i}-1.$ Hence,
    \[
    \# C(m) = (q-1) \prod\limits_{i=1}^r (q^{d_i}-1),\]
    where the first factor $(q-1)$ corresponds to the choice of the multiplier $\lambda'$.
\end{proof}

Second, we estimate the size of $D_{(d_1,\ldots,d_r)}(\lambda_0).$ We denote by $ I_{(d_1,\ldots,d_r)}(\lambda_0) $ the set of tuples of irreducible polynomials $(P_1,\ldots,P_r) \in \F_q[X]^r$ such that $P_i$ is irreducible of degree~$d_i$ for all $i$ and the product 
$P_1 \cdots P_r \cdot \widetilde{P}_1^{\lambda_0} \cdots \widetilde{P}_r^{\lambda_0}$ 
is squarefree. In the next lemma, we identify $D_{(d_1,\ldots,d_r)}(\lambda_0)$ with a set of characteristic polynomials.

\begin{lemma} \label{lem:ItoD}
    Consider the map
    \[
    H:\left\{
    \begin{array}{lll}
       I_{(d_1,\ldots,d_r)}(\lambda_0)  & \to & D_{(d_1,\ldots,d_r)}(\lambda_0) \\
        (P_1,\ldots,P_r)  & \mapsto & P_1 \cdots P_r \cdot \widetilde{P}_1^{\lambda_0} \cdots \widetilde{P}_r^{\lambda_0}. \\
    \end{array}
    \right.\]
    Then, for every $\chi \in D_{(d_1,\ldots,d_r)}(\lambda_0)$, we have $$\#H^{-1}(\chi) = 2^r \cdot \prod\limits_{k=1}^{h} \#\{ j \; : \; d_j = k \}!.$$
\end{lemma}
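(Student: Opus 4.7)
The plan is to compute the fiber cardinality by analyzing how a separable characteristic polynomial $\chi \in D_{(d_1,\ldots,d_r)}(\lambda_0)$ is reconstructed from its irreducible factorization. First I would note that the map $P\mapsto \widetilde{P}^{\lambda_0}$ sends irreducible polynomials of degree $d$ (with nonzero constant coefficient) to irreducible polynomials of the same degree: if $P$ has roots $\alpha_1,\ldots,\alpha_d$ in $\overline{\F}_q$, then $\widetilde P^{\lambda_0}$ has roots $\lambda_0/\alpha_1,\ldots,\lambda_0/\alpha_d$, which generate the same extension of $\F_q$. Moreover, this map is an involution on such polynomials.

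Next, fix $\chi \in D_{(d_1,\ldots,d_r)}(\lambda_0)$, and pick any preimage $(P_1,\ldots,P_r) \in H^{-1}(\chi)$. Since $\chi = P_1\cdots P_r\,\widetilde{P}_1^{\lambda_0}\cdots \widetilde{P}_r^{\lambda_0}$ has degree $2h = 2(d_1+\cdots+d_r)$ and is separable by assumption, its $2r$ listed irreducible factors are pairwise distinct; in particular $P_i \neq \widetilde{P}_i^{\lambda_0}$ for every $i$. Consequently, the set $\mathcal{F}(\chi)$ of irreducible factors of $\chi$ is partitioned into exactly $r$ orbits of size $2$ under the involution $P\mapsto \widetilde P^{\lambda_0}$, and the multiset of degrees of these orbits equals $\{d_1,\ldots,d_r\}$.

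Now I would count preimages directly. A preimage is obtained by (a) choosing, for each index $i \in \{1,\ldots,r\}$, an orbit of size~$2$ in $\mathcal F(\chi)$ whose common degree is $d_i$, in such a way that every orbit is used exactly once, and (b) selecting, for each chosen orbit, which of its two elements is called $P_i$. For step~(a), once we group the orbits by degree, assigning orbits to indices amounts to permuting the $\#\{j:d_j=k\}$ positions of each degree~$k$ independently, giving $\prod_{k=1}^h \#\{j:d_j=k\}!$ choices. Step~(b) contributes the factor $2^r$. Multiplying yields
\[
\#H^{-1}(\chi) = 2^r\cdot \prod_{k=1}^h \#\{j:d_j=k\}!
\]
as claimed.

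The only mildly delicate point is the separability-based distinctness assertion, which underlies both the fact that the $r$ orbits are genuinely of size $2$ (so that step~(b) really gives a factor $2$ each) and the fact that distinct choices in steps~(a)--(b) yield distinct tuples $(P_1,\ldots,P_r)$. Everything else is a bookkeeping exercise, so I do not expect any serious obstacle.
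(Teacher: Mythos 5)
Your proposal is correct and follows essentially the same approach as the paper's proof: fix one preimage $(P_1,\ldots,P_r)$, then count the other preimages by noting that any preimage is obtained by choosing one member of each orbit $\{P_i,\widetilde{P_i}^{\lambda_0}\}$ (contributing $2^r$) and permuting the indices within each degree class (contributing $\prod_k \#\{j:d_j=k\}!$). The paper's proof is terser and omits the explicit verification that $P\mapsto\widetilde{P}^{\lambda_0}$ is a degree-preserving involution on irreducibles, but the underlying argument is identical.
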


\begin{proof}
    Fix an element $(P_1,\ldots,P_r) \in H^{-1}(\chi)$. Then, because~$\chi$ is squarefree, choosing another element of $H^{-1}(\chi)$ consists in choosing one element of the pair $\{P_i,\widetilde{P}_i^{\lambda_0} \}$ for every $i \in \{1,\ldots,r\}$, as well as a permutation of the tuple $(P_{i_{k,1}},\ldots,P_{i_{k,s}})$ where $P_{i_{k,1}},\ldots,P_{i_{k,s}}$ are the polynomials of degree~$k$, for every $k \in \{1,\ldots,h\}$.
\end{proof}

\begin{proof}[Proof of \cref{prop:sizeSplit}]
By \cref{lem:cardinalityConjClass}, the number of elements in $\Split{2h}{\F_q}^{\sqf}(\lambda_0)$ is $$
    \#\Split{2h}{\F_q}^{\sqf}(\lambda_0) = \sum\limits_{(d_1,\ldots,d_r) \in \Sigma_h} \#D_{(d_1,\ldots,d_r)}(\lambda_0) \cdot \frac{\#\GSp_{2h}(\F_q)}{(q-1) \prod\limits_{i=1}^r (q^{d_i}-1)}.$$
    
For a partition $(d_1,\ldots,d_r)$ of $h$ with $d_1 \leq \ldots \leq d_r$, we estimate the size of $D_{(d_1,\ldots,d_r)}(\lambda_0)$ by determining the size of $I_{(d_1,\ldots,d_r)}(\lambda_0)$ and using \cref{lem:ItoD}.

The last coefficients of a monic polynomial $P$ of degree~$d$ such that $P = \widetilde{P}^{\lambda_0}$ are determined by the first coefficients, so the number of irreducible polynomials $P$ such that $P = \widetilde{P}^{\lambda_0}$ is $O(q^{d-1}).$ For every $i$, one has to choose $P_i$ such that $P_i \ne \widetilde{P}_i^{\lambda_0}$ and $P_i \ne P_j, \widetilde{P}_j^{\lambda_0}$ for indices $j < i$. Then, according to a formula from Gauss for the number of irreducible polynomials of $\F_q[X]$ of given degree (see \cite{chebolu11} for a proof), the number of choices for~$P_i$ is $\frac{1}{d_i} q^{d_i} + O(q^{d_i -1}).$ Hence
\[
\#I_{(d_1,\ldots,d_r)}(\lambda_0) = \left(\prod\limits_{i=1}^{r} \frac{1}{d_i} \right) q^{h} + O_h(q^{h-1}).
\]
Therefore,
\begin{align*}
    \#D_{(d_1,\ldots,d_r)}(\lambda_0) & = \left( \frac{1}{2^r} \cdot\prod\limits_{i=1}^r  \frac{1}{d_i} \cdot \prod\limits_{k=1}^h  \frac{1}{\# \{ j \; : \; d_j = k \}!} \right) q^{h} + O_h(q^{h-1}) 
\end{align*}
and consequently
\[
\#\Split{2h}{\F_q}^{\sqf}(\lambda_0) = \alpha_h q^{f(h)-1} + O(q^{f(h)-2}).
\]
Combining this with Lemma \ref{lem:insepMuFixed} ends the proof.
\end{proof}

Even using \cref{prop:elkies-equiv-PPtilde}, we note that giving an exact count of $\Split{2h}{\F_q}^{\nsqf}(\lambda_0)$ would be more difficult than in the squarefree case, because a given characteristic polynomial may correspond to several conjugacy classes, contrary to \cref{prop:parametrizationSeparable}. For our purposes, the asymptotic upper bound of \cref{lem:insepMuFixed} is sufficient.

As a final remark, when $h=2$ and $q = \ell$ is an odd prime, we are able to determine the exact cardinality of $\Split{2h}{\F_\ell}$, through the classification of the conjugacy classes of $\GSp_4(\F_\ell)$ and the computation of the cardinality of the classes in \cite{breeding11}.

\begin{prop}
\label{prop:exact-count}
    We have $$\# \Split{4}{\F_\ell} =\frac{(3\ell^3 + 7\ell^2 + 7\ell + 11)(\ell + 1)(\ell - 1)^3\ell^4}{8}.$$
\end{prop}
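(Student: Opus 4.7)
The plan is to use the explicit classification of conjugacy classes in $\GSp_4(\F_\ell)$ obtained by Breeding~\cite{breeding11}, together with the formulas given there for the cardinality of each class. Since $\Split{4}{\F_\ell}$ is a union of conjugacy classes by construction, computing $\#\Split{4}{\F_\ell}$ amounts to deciding which classes lie inside $\Split{4}{\F_\ell}$ and summing their sizes.

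To decide splitness of a class with representative $m$ of multiplier $\lambda_0$, I would first apply \cref{prop:split-implies-PPtilde}: a necessary condition is that $\chi_m$ factor as $P\widetilde{P}^{\lambda_0}$ for some monic $P \in \F_\ell[X]$ of degree~$2$. For classes whose characteristic polynomial is separable, this condition is also sufficient by \cref{prop:elkies-implies-split}, so such classes are all split. For the (finitely many) class types whose characteristic polynomial is inseparable, I would consult the explicit normal forms in Breeding's tables and check by hand whether a maximal isotropic stable subspace exists, discarding those that are not split.

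Once the split class types are identified, the total is obtained by summing $\#\GSp_4(\F_\ell)/\#C(m)$ over all of them, weighted by the number of classes of each type. The latter is a counting problem over characteristic polynomials of the prescribed form $P\widetilde{P}^{\lambda_0}$, and reduces via Gauss's formula to a count of irreducible polynomials of small degree over $\F_\ell$ (which is exact rather than merely asymptotic, in contrast to the general setting of \cref{prop:sizeSplit}). The resulting total is a polynomial expression in $\ell$ that must be checked to agree with $\tfrac{1}{8}(3\ell^3 + 7\ell^2 + 7\ell + 11)(\ell + 1)(\ell - 1)^3\ell^4$, which can be verified by direct expansion.

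The main obstacle is not conceptual but organizational: Breeding's list contains many types of conjugacy classes, parametrized by the rational canonical form and the configuration of eigenvalues, and for each type one must carry out all three steps (splitness test, class count, class size) without error. The delicate cases are the inseparable ones, where \cref{prop:elkies-implies-split} is silent, in particular when $P = \widetilde{P}^{\lambda_0}$ or when eigenvalues coincide, so that the normal form involves Jordan blocks and splitness must be checked directly from a chosen symplectic basis. A convenient consistency check is that the leading term of the polynomial one obtains must equal $\tfrac{3}{8}\ell^{11}$, as predicted by \cref{prop:sizeSplit} (with $\alpha_2 = 3/8$) summed over $\lambda_0 \in \F_\ell^\times$, and this matches the leading term of the claimed formula.
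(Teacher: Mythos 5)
Your approach is essentially the same as the paper's: both rely on the explicit classification of conjugacy classes of $\GSp_4(\F_\ell)$ and the centralizer orders tabulated by Breeding (the paper also cites Williams for the class types) to identify the split classes and sum their sizes. The one small point you frame as a case-by-case worry — whether some inseparable classes with $\chi_m = P\widetilde{P}^{\lambda_0}$ might fail to be split — the paper resolves in one stroke by observing from the tables that every class with $\chi_m = P\widetilde{P}^{\lambda_0}$ has a block-triangular representative as in \cref{lem:block-triangular}, so the converse of \cref{prop:split-implies-PPtilde} holds in $\GSp_4(\F_\ell)$ unconditionally and nothing needs to be discarded.
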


\begin{proof}
 Conjugacy classes of $\GSp_4(\F_\ell)$ have been sorted in different types \cite[Section 6.2]{williams12} according to the factorization of the characteristic polynomial, and the number of classes of each type is known.  
    The order of each center has been computed in \cite[Table 1]{breeding11} (beware that the antisymmetric matrix used to define $\GSp_{2h}$ there is not $J_{2h}$, so notation differs from \cite{williams12}). Thus we can deduce the size of each conjugacy class; we omit the detailed calculation.
\end{proof}

\section{The distribution of Elkies primes}
\label{sec:distribution}

In this section, we prove Theorem \ref{thm:main-moments}. We introduce the character sum $U_k$, similar to the sum~$U$ in \cite[eq.\,(4)]{shparlinski15}, in §\ref{subsec:setup}. We control the small terms in this sum in §\ref{subsec:smallTerms} and we estimate the dominant term in §\ref{subsec:dominantTerm}. Finally, we conclude the proof in §\ref{subsec:conclusion}.

\subsection{Setup} \label{subsec:setup}

We keep the notation from \cref{thm:main-cv}. We may assume that $P$ and $L$ are sufficiently large, so that $A_\frakp$ is well-defined for every $\frakp \in \mathcal{P}_F(P,2P)$, and if $\L=\frakl_1 \cdots \frakl_r$ is the product of $r$ distinct primes of $\mathcal{P}_K(L,2L)$, then 
$$G_\L := \Gal(F(A[\L])/F)$$
contains $\Sp_{2h}\left( \O / \L \O \right)$. (Recall that we always have $G_\L\subset \GSp_{2h}\left(\O / \L \O \right)$.) This assumption is harmless since we want to establish an asymptotic result.

The Landau prime ideal theorem \cite{landau} for the fields $K$ and $F$ asserts that 
\[
\#\mathcal{P}_K(L,2L) \sim \frac{L}{\log(L)} \quad \mathrm{and} \quad \#\mathcal{P}_F(P,2P) \sim \frac{P}{\log(P)}.
\]
Let 
\[
\delta_{\frakp,\frakl} =
    \begin{cases}
        (1-\alpha_h) & \mbox{if $\frakl$ is Elkies for $A_\frakp$}, \\
        -\alpha_h & \mbox{otherwise}
    \end{cases}
\]
For a product $ \frakl_1 \cdots \frakl_r$, we define 
\[
\delta_{\frakp,\frakl_1\cdots \frakl_r} = \delta_{\frakp,\frakl_1} \cdots \delta_{\frakp,\frakl_r}.
\]
We further set
\[
\mu    = \alpha_h  \# \mathcal{P}_K(L,2L) \quad\text{and}\quad
        \sigma  = \sqrt{\alpha_h  (1-\alpha_h)  \# \mathcal{P}_K(L,2L)}.
\]
By definition,
\begin{align*}
N_e(\frakp,L) - \mu & = (1-\alpha_h) N_e(\frakp,L) - \alpha_h (\# \mathcal{P}_K(L,2L)-N_e(\frakp,L)) 
\\ & = \sum\limits_{\frakl \in \mathcal{P}_K(L,2L)} \delta_{\frakp,\frakl}.
\end{align*} 
For any integer $k \geq 1$, the $k$-th moment of $X_{P,L}$ is 
\begin{align*}
    \mathbb{E}(X_{P,L}^k) 
    & = \frac{1}{\# \mathcal{P}_F(P,2P)}   \sum\limits_{\frakp \in \mathcal{P}_F(P,2P) } \left(\frac{N_e(\frakp,L) - \mu}{\sigma} \right)^k \\
    & = \frac{1}{\# \mathcal{P}_F(P,2P) \cdot \sigma^k}  \sum\limits_{\frakp \in \mathcal{P}_F(P,2P)} \left( \sum\limits_{\frakl \in \mathcal{P}_K(L,2L)} \delta_{\frakp,\frakl} \right)^k \\
    & = \frac{1}{\# \mathcal{P}_F(P,2P) \cdot \sigma^k}  \sum\limits_{\frakp \in \mathcal{P}_F(P,2P)}  \sum\limits_{\substack{\frakl_1,\ldots,\frakl_k \\ \in \mathcal{P}_K(L,2L)}} \delta_{\frakp,\frakl_1 \cdots \frakl_k}.
\end{align*} 
Hence, we are led to considering the sums 
\[
U_k:= \sum\limits_{\frakp \in \mathcal{P}_F(P,2P)} \sum\limits_{\substack{\frakl_1,\ldots,\frakl_k \\ \in \mathcal{P}_K(L,2L)}} \delta_{\frakp,\frakl_1 \cdots \frakl_k}.
\]

We expect compensations in the sum $U_k$ when some primes among $\frakl_1,\ldots,\frakl_k$ appear an odd number of times, and we will sort terms according to the number of distinct primes. In the spirit of the proof of \cite[Theorem 1]{shparlinski15}, for $0\leq j \leq k$, let  $\mathcal{Q}_{k,j}$ be the set of tuples $(\frakl_1,\ldots,\frakl_k)$ of primes in $\mathcal{P}_K(L,2L)$ such that $\frakl_1 \cdots \frakl_k = \fraka^2 \frakb$ where $\frakb$ is a squarefree product of $j$ prime ideals and $\fraka$ is the product of $\frac{k-j}{2}$ prime ideals ($\mathcal{Q}_{k,j}$ is empty if $k-j$ is odd). If $k = 2\nu$ is even, we also define $\mathcal{Q}_{k,0}'\subset \mathcal{Q}_{k,0}$ to be the set of tuples $(\frakl_1,\ldots,\frakl_k)$ such that $\frakl_1 \cdots \frakl_k = \fraka ^2$ where $\fraka$ is a product of $\nu$ \emph{distinct} prime ideals. We will see that the dominant term comes from the contribution of the terms of $\mathcal{Q}_{k,0}'$. We begin by estimating the other terms.

\subsection{Small terms} \label{subsec:smallTerms}

We want to prove the following result, which generalizes Lemmas~5 and~6 in \cite{shparlinski15}. As in \cref{thm:main-cv}, the dependency on~$A$ in Landau's notation includes the dependency on~$F$, $\O$ and~$h$.

\begin{prop} \label{prop:sumLiFixed}
Assume GRH. For $P > 2L$ and a product $\L = \frakl_1 \ldots \frakl_r$  of $r$ distinct primes of $\mathcal{P}_K(L,2L)$, we have 
\[
\sum\limits_{\frakp \in \mathcal{P}_F(P,2P)} \delta_{\frakp,\L} = O_{A,r}\left(\frac{P}{ \log(P) L^r} + L^{f(h)r} P^{1/2} \log(P)  \right).
\]
\end{prop}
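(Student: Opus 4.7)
The plan is to apply the effective Chebotarev density theorem under GRH to the Galois extension $F' := F(A[\L])/F$. By the large Galois image hypothesis, assuming $L$ is large enough (as we may), $G_\L := \Gal(F'/F)$ sits between $\Sp_{2h}(\O/\L\O)$ and $\GSp_{2h}(\O/\L\O)$; via the natural identification $\O/\L\O \cong \prod_{i=1}^r \O/\frakl_i\O$, we may regard $G_\L$ as a subgroup of $\prod_{i=1}^r \GSp_{2h}(\O/\frakl_i\O)$.

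First I would recast $\delta_{\frakp,\L}$ as the evaluation at $\rhobar_\L(\sigma_\frakp)$ of a conjugacy-invariant function $\delta_\L \colon G_\L \to \R$. By \cref{prop:elkies-frob-nf}, whether $\frakl_i$ is Elkies for $A_\frakp$ depends only on whether the $\frakl_i$-component of $\rhobar_\L(\sigma_\frakp)$ is split, and splitness is a property of the matrix alone, not of its multiplier. Thus $\delta_\L(g) = \prod_{i=1}^r \delta^{(i)}(g_i)$, where $\delta^{(i)}(m) = 1-\alpha_h$ if $m$ is split and $-\alpha_h$ otherwise, is well-defined and satisfies $\delta_\L(\rhobar_\L(\sigma_\frakp)) = \delta_{\frakp,\L}$.

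Next I would invoke the Lagarias--Odlyzko effective Chebotarev theorem under GRH. Summing over conjugacy classes of $G_\L$ weighted by $\delta_\L$ and using $|\delta_\L| \leq 1$ to collapse the prefactor $|C|/|G_\L|$ in the error into $1$, one obtains
\[
\sum_{\frakp \in \mathcal{P}_F(P,2P)} \delta_{\frakp,\L} = \overline{\delta}_\L \cdot \pi_F(P,2P) + O\!\left(P^{1/2}\bigl(\log d_{F'} + [F':\Q]\log P\bigr)\right),
\]
where $\overline{\delta}_\L = |G_\L|^{-1} \sum_{g \in G_\L} \delta_\L(g)$. Since $F'/F$ is unramified outside primes of bad reduction for $A$ (a finite set depending only on $A$) and primes above $\ell_1,\ldots,\ell_r$, standard discriminant bounds give $\log d_{F'} \ll_A [F':\Q] \cdot r \log L$ and $[F':\Q] \ll_A L^{f(h) r}$. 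Combined with $P > 2L$, so $\log L \ll \log P$, the error simplifies to $O_{A,r}(L^{f(h) r} P^{1/2} \log P)$, matching the second term of the claim.

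It remains to bound $\overline{\delta}_\L$ by $O_{h,r}(L^{-r})$; this is the main point and relies crucially on the uniformity in $\lambda_0$ of \cref{prop:sizeSplit}. Because $G_\L$ contains $\Sp_{2h}(\O/\L\O) \cong \prod_i \Sp_{2h}(\O/\frakl_i\O)$, each fiber of the multiplier $\lambda \colon G_\L \to (\O/\L\O)^\times$ is a coset of this product, so the sum over a fiber decouples:
\[
\sum_{g \in \lambda^{-1}(\overline{\lambda}_0)} \delta_\L(g) = \prod_{i=1}^r \sum_{\substack{m \in \GSp_{2h}(\O/\frakl_i\O) \\ \lambda(m) = \lambda_{0,i}}} \delta^{(i)}(m).
\]
Each inner sum equals $\#\Split{2h}{\O/\frakl_i\O}(\lambda_{0,i}) - \alpha_h \cdot |\Sp_{2h}(\O/\frakl_i\O)|$, which by \cref{prop:sizeSplit} and the explicit formula for $|\Sp_{2h}(\F_q)|$ is $O_h(N(\frakl_i)^{f(h)-2})$ uniformly in $\lambda_{0,i}$. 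Dividing by $|G_\L| = |\lambda(G_\L)| \cdot \prod_i |\Sp_{2h}(\O/\frakl_i\O)|$, the $|\lambda(G_\L)|$ factor cancels against the number of fibers and one obtains $|\overline{\delta}_\L| = O_{h,r}(L^{-r})$, whence the main term contributes $O_{h,r}(P/(L^r \log P))$. The only delicate step is this last cancellation, which is precisely the rationale for the definition of $\alpha_h$.
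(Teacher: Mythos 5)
Your proof is correct and follows essentially the same route as the paper's: an effective Chebotarev density theorem under GRH applied to the Galois group $G_\L$, combined with the factorization of the relevant density over the primes $\frakl_i$ (after fixing a multiplier vector $(\lambda_1,\ldots,\lambda_r)$) and the key uniform-in-$\lambda_0$ estimate $\#\Split{2h}{\F_q}(\lambda_0) - \alpha_h\#\Sp_{2h}(\F_q) = O_h(q^{f(h)-2})$ from \cref{prop:sizeSplit}. The paper applies Chebotarev to the $2^r$ conjugacy-invariant indicator sets $\mathcal{C}_{\frakl_1,\ldots,\frakl_r}(\eps_1,\ldots,\eps_r)$ and then forms the weighted combination, whereas you package the same computation as a single weighted Chebotarev sum with weight $\delta_\L$ bounded by $1$; this is a clean equivalent reformulation, not a genuinely different method.
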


The proof of this proposition is based on the \v{C}ebotarev density theorem in the Galois group $G_\L,$ which is a subgroup of
\[
\GSp_{2h}\left( \prod\limits_{i=1}^r \O/\frakl_i \O \right) \cong \prod\limits_{i=1}^r \GSp_{2h}(\O/\frakl_i\O).
\]
Thus, an element $m \in G_\L$ can be identified with an element 
\[
(m_1,\ldots,m_r) \in \prod\limits_{i=1}^r \GSp_{2h}(\O/\frakl_i \O)
\]
and its multiplier $\lambda(m)$ with an element 
\[
(\lambda_1,\ldots,\lambda_r) \in \prod\limits_{i=1}^r (\O/\frakl_i\O)^\times.
\]
For $(\lambda_1,\ldots,\lambda_r) \in \lambda(G_\L),$ we define
\[
G_{\L}(\lambda_1,\ldots,\lambda_r) := \{ m \in G_{\L}:  \lambda(m) = (\lambda_1,\ldots,\lambda_r) \}
\]
which can be identified with $$\prod\limits_{i=1}^r \GSp_{2h}(\O/\frakl_i\O,\{\lambda_i\}).$$ In particular, by the large Galois image assumption,
\[\# G_{\L}(\lambda_1,\ldots,\lambda_r) = \prod\limits_{i=1}^r \# \Sp_{2h}(\O/\frakl_i \O)
\quad \mathrm{and} \quad \# G_{\L} = \# \lambda(G_{\L}) \cdot \prod\limits_{i=1}^r \# \Sp_{2h}(\O/\frakl_i \O).
\]

Let us now construct the conjugacy-invariant subsets of~$G_\L$ we are interested in. Given a tuple $\eps = (\eps_1,\ldots,\eps_r) \in \{\pm 1\}^r$, we denote by 
\[
\mathcal{C}_{\frakl_1,\ldots,\frakl_r}(\eps_1,\ldots,\eps_r) \subset G_\L
\]
the set of elements $m = (m_1,\ldots,m_r)$ of $G_{\L}$ such that $m_i \in \Split{2h}{\O/\frakl_i\O}$ if $\eps_i = 1$ and $m_i \notin \Split{2h}{\O/\frakl_i \O}$ if $\eps_i = -1$. These sets~$\mathcal{C}_{\frakl_1,\ldots,\frakl_r}(\eps_1,\ldots,\eps_r)$ are indeed stable by conjugation in~$G_\L$, and form a partition of~$G_\L$ as~$\eps$ varies.

We also need to determine the size of these sets.
For $\lambda_i \in (\O/\frakl_i \O)^\times$, we define 
\[
    \begin{cases}    
    C_{\frakl_i}^1(\lambda_i) & = \# \Split{2h}{\O/\frakl_i\O}(\lambda_i), \\
        C_{\frakl_i}^{-1}(\lambda_i) & = \# (\GSp_{2h}(\O/\frakl_i \O;\{\lambda_i\}) - \Split{2h}{\O/\frakl_i\O}(\lambda_i)).
\end{cases}
\]
Considering the preimage of each multiplier $(\lambda_1,\ldots,\lambda_r) \in \lambda(G_{\L})$ separately, we immediately obtain
\[
\# \mathcal{C}_{\frakl_1, \ldots,\frakl_r }(\eps_1,\ldots,\eps_r) = \sum\limits_{(\lambda_1,\ldots,\lambda_r) \in \lambda(G_{\L})} \prod\limits_{i=1}^r C_{\frakl_i}^{\eps_i}(\lambda_i).
\]

For a given prime~$\frakp$ of good reduction for~$A$, if we set 
\[
\eps_{\frakp,\frakl_i} = \begin{cases}
       1 & \mbox{if $\frakl_i$ is Elkies for $A_\frakp$}, \\
       -1 & \mbox{otherwise}.
    \end{cases}
\]
then by \cref{prop:elkies-frob-nf}, the Frobenius element $\sigma_\frakp$ at~$\frakp$ in $G_\L$ satisfies 
\[
(\rhobar_{\frakl_1}(\sigma_\frakp),\ldots,\rhobar_{\frakl_r}(\sigma_\frakp))
\in \mathcal{C}_{\frakl_1,\ldots,\frakl_r}(\eps_{p,\frakl_1},\ldots,\eps_{p,\frakl_r}).
\]

\begin{lemma}
\label{lem:cebotarev}
    Let $\L = \frakl_1\cdots\frakl_r$ be a product of distinct primes of $\mathcal{P}_K(L,2L)$ and $(\eps_1,\ldots,\eps_r)$ be an element of $\{\pm 1\}^r$. Then, for $x > 2L$, we have
    \begin{align*}
        \#\{ \frakp \; : \; N(\frakp) \leq x \; \text{and} \; \eps_{\frakp,\frakl_i} = \eps_i \text{ for all } i\} 
        & = \frac{\sum\limits_{(\lambda_1,\ldots,\lambda_r) \in \lambda(G_{\L})} \prod\limits_{i=1}^r C_{\frakl_i}^{\eps_i}(\lambda_i)}{\# G_{\L}} \frac{x}{\log(x)} \\
        & \quad + O_{A,r}\left(N(\frakl_1)^{f(h)} \cdots N(\frakl_r)^{f(h)}   x^{1/2} \log(x)\right). \\  
        \end{align*}
\end{lemma}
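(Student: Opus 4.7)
The plan is to apply an effective version of the Chebotarev density theorem under GRH to the Galois extension $L := F(A[\L])/F$, whose Galois group is $G_\L$, and to the conjugation-invariant subset $C := \mathcal{C}_{\frakl_1,\ldots,\frakl_r}(\eps_1,\ldots,\eps_r) \subset G_\L$. By \Cref{prop:elkies-frob-nf}, for any prime $\frakp$ of $F$ that is unramified in $L$ (equivalently, coprime to each $\frakl_i$ and of good reduction for $A$), the condition ``$\eps_{\frakp,\frakl_i} = \eps_i$ for all $i$'' is equivalent to $\mathrm{Frob}_\frakp \subset C$; the finitely many ramified primes---those of bad reduction for $A$ together with those above $\ell_1,\ldots,\ell_r$---contribute $O_{A,r}(1)$ and are absorbed in the error.

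Next I would invoke the Lagarias--Odlyzko form of effective Chebotarev under GRH, which yields
\[
\#\{\frakp : N(\frakp) \leq x,\ \mathrm{Frob}_\frakp \subset C\} = \frac{|C|}{|G_\L|}\,\mathrm{li}(x) + O\!\left(\frac{|C|}{|G_\L|}\, x^{1/2}\bigl(\log|d_L| + [L:\Q]\log x\bigr)\right).
\]
The cardinality $|C|$ is exactly the sum $\sum_{(\lambda_1,\ldots,\lambda_r) \in \lambda(G_\L)} \prod_i C_{\frakl_i}^{\eps_i}(\lambda_i)$ computed just before the lemma, and $\mathrm{li}(x)$ may be replaced by $x/\log x$ up to a lower-order term that will be folded into the subsequent proposition-level estimates.

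To put the error into the claimed form, I would estimate $[L:F]$ and $\log|d_L|$. Since $G_\L$ embeds into $\prod_i \GSp_{2h}(\O/\frakl_i\O)$, we have $[L:F] = |G_\L| = O_h\bigl(\prod_i N(\frakl_i)^{f(h)}\bigr)$. The extension $L/F$ is only ramified at the finitely many primes of $F$ of bad reduction for $A$ and at primes above $\ell_1,\ldots,\ell_r$. Using a standard bound on discriminants of torsion fields of abelian varieties (analogous to the elliptic-curve estimate used in \cite{shparlinski15}, obtained by bounding the local Artin conductors of the $\ell_i$-adic representations), we obtain $\log|d_L| = O_A([L:F]\,\log L)$. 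Combining these estimates with $[L:\Q] = O([L:F])$ and $|C| \leq |G_\L|$ yields the error in the claimed form. The main obstacle is establishing the discriminant bound: one must track wild ramification at primes of bad reduction and the local ramification of the $\ell_i$-adic Galois representations at primes above $\ell_i$, verifying that each contributes an amount of size $O_A(1)$ per prime of $L$ sitting above it.
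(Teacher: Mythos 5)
Your overall approach agrees with the paper's: apply an effective \v{C}ebotarev density theorem under GRH to $F(A[\L])/F$ with the conjugation-invariant set $\mathcal{C}_{\frakl_1,\ldots,\frakl_r}(\eps_1,\ldots,\eps_r)$, whose cardinality is the sum $\sum_{\lambda}\prod_i C_{\frakl_i}^{\eps_i}(\lambda_i)$. Where you diverge is in the bookkeeping for the error term, and it is worth flagging because you misidentify what the real difficulty is.

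You invoke the raw Lagarias--Odlyzko form with $\log\lvert d_L\rvert$ appearing explicitly, and then declare that bounding $\log\lvert d_L\rvert$ --- by controlling ``local Artin conductors of the $\ell_i$-adic Galois representations'' and ``wild ramification at primes of bad reduction'' --- is ``the main obstacle.'' It is not, and the paper's proof shows why. The paper cites Serre's reformulation ($20_R$) of the effective \v{C}ebotarev theorem, which replaces $\log\lvert d_L\rvert$ by a bound depending only on the degree $[L:\Q]$ and the radical of the discriminant (the product of ramified rational primes). That replacement is a completely generic fact about Galois extensions of number fields (the Hensel--\"Ore bound: $\log\lvert d_L\rvert \le ([L:\Q]-1)\log\operatorname{rad}(d_L) + [L:\Q]\log[L:\Q]$), with no reference whatsoever to abelian varieties or conductors of Galois representations. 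The only arithmetic input specific to $A$ that is actually needed is the statement that the ramified primes of $F(A[\L])/F$ lie among the primes of bad reduction of $A$ and the primes above $\ell_1,\ldots,\ell_r$ --- and this follows directly from the N\'eron--Ogg--Shafarevich criterion, which is the reference you should be giving instead of Artin conductors. Once you have the list of ramified primes and the bound $[L:F]=\lvert G_\L\rvert = O_{h,r}\bigl(\prod_i N(\frakl_i)^{f(h)}\bigr)$, the entire error term falls out. Your sketch would still yield the correct bound if carried through, but the detour through conductor estimates for torsion fields (as in Serre--Tate style analyses) is strictly harder than what is required and is the one place where your plan genuinely lacks a clear path; switching to Serre's ($20_R$), or simply applying Hensel's inequality directly, removes it.

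One small additional remark: you pass from $\li(x)$ to $x/\log x$ and say the discrepancy ``will be folded into the subsequent proposition-level estimates.'' That is true, but it is worth noting that the discrepancy $O\bigl(\tfrac{\lvert C\rvert}{\lvert G_\L\rvert}\,x/\log^2 x\bigr)$ is \emph{not} dominated by the $x^{1/2}\log x$ error term of this lemma in all regimes; it is only harmless once one tracks it through \cref{prop:sumLiFixed,prop:dominantSum}, where it is dominated by other contributions. The paper glosses over this point too, but it deserves a sentence.
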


\begin{proof}
    This follows from an effective version of the \v{C}ebotarev density theorem in~$G_\L$ \cite[§2, Equation ($20_R$)]{serre81} for the set $\mathcal{C}_{\frakl_1,\ldots,\frakl_r}(\eps_1,\ldots,\eps_r)$.
    In the left-hand side of ($20_R$), an upper bound on $\#\mathcal{C}_{\frakl_1, \ldots,\frakl_r }(\eps_1,\ldots,\eps_r)$ is the order of $G_\L$, which is $O_r\left(N(\frakl_1)^{f(h)} \cdots N(\frakl_r)^{f(h)}\right).$ The degree $n$ of the extension $F(A[\L])/F$ is equal to the order of $G_\L$. Since $x > 2L$, we have $\log(n) = O_{r,h}(\log(x)).$ For every $i \in \{1, \ldots,r\}$, let $\ell_i$ be the prime number below~$\frakl_i$. The ramified primes in the extension $F(A[\L])/F$ lie among the divisors of $\ell_1, \ldots, \ell_r$ and the primes of bad reduction of $A$ (this follows from the Néron-Ogg-Shafarevich criterion), so 
    \[
    \log\left(\prod\limits_{i=1}^r \ell_i \right) = O_r(\log(x))
    \]
    under the assumption $x > 2L$.
\end{proof}

\begin{proof}[Proof of \cref{prop:sumLiFixed}]
    We have 
    \[
    \sum\limits_{\frakp \in \mathcal{P}_F(P,2P)} \delta_{\frakp,\L} = \sum\limits_{\substack{(\gamma_1,\ldots,\gamma_r) \\ \in \{ 1-\alpha_h,-\alpha_h\}^r }} \gamma_1 \cdots \gamma_r \cdot \#\{ \frakp : N(\frakp) \leq x \; \text{and} \; \eps_{\frakp,\frakl_i} = \eps_i \text{ for all } i\}
    \]
    where $\eps_i = 1$ if $\gamma_i = 1 - \alpha_h$ and $\eps_i = -1$ if $\gamma_i = - \alpha_h.$ Write 
    \[
    S_{(\lambda_1,\ldots,\lambda_r)}(\frakl_1,\ldots,\frakl_r) = \sum\limits_{\substack{(\gamma_1,\ldots,\gamma_r) \\ \in \{1-\alpha_h,-\alpha_h \}^r }} \gamma_1 \cdots \gamma_r \cdot \prod\limits_{i=1}^r C_{\frakl_i}^{\eps_i}(\lambda_i).
    \]
    By the previous lemma,
    \begin{equation}
        \label{eq:sum-delta}
    \begin{aligned}
        \sum\limits_{\frakp \in \mathcal{P}_F(P,2P)} \delta_{\frakp,\L} 
        & = \frac{\sum\limits_{(\lambda_1,\ldots,\lambda_r) \in \lambda(G_{\L})} S_{(\lambda_1,\ldots,\lambda_r)}(\frakl_1, \ldots, \frakl_r)}{\# G_{\L}} \cdot \frac{P}{\log(P)} \\
        & \quad + O_{A,r}\left(N(\frakl_1) ^{f(h)} \cdots N(\frakl_r) ^{f(h)} P^{1/2} \log(P)\right). \\
    \end{aligned}
    \end{equation}
    Fix $(\lambda_1, \ldots,\lambda_r) \in \lambda(G_{\L})$. Then, 
    \[
    S_{(\lambda_1,\ldots,\lambda_r)}(\frakl_1,\ldots,\frakl_r) = \prod\limits_{i=1}^r S_{\lambda_i}(\frakl_i)
    \]
    where $S_{\lambda_i}(\frakl_i) := (1-\alpha_h) C_{\frakl_i}^1(\lambda_i) - \alpha_h C_{\frakl_i}^{-1}(\lambda_i).$
    We have $S_{\lambda_i}(\frakl_i) = O_h(N(\frakl_i)^{f(h)-2})$ according to \cref{prop:sizeSplit}. Thus, 
    \[
    S_{(\lambda_1,\ldots,\lambda_r)}(\frakl_1,\ldots,\frakl_r) = O_{A,r}\left(N(\frakl_1)^{f(h)-2} \cdots N(\frakl_r)^{f(h)-2}\right).
    \]
    In this equation, the implicit constant is independent of $(\lambda_1,\ldots,\lambda_r).$ Therefore, 
    \[
    \sum\limits_{(\lambda_1,\ldots,\lambda_r) \in \lambda(G_{\L})} S_{(\lambda_1,\ldots,\lambda_r)}(\frakl_1, \ldots, \frakl_r) = O_{A,r} \left( \# \lambda(G_{\L}) \cdot N(\frakl_1)^{f(h)-2} \cdots N(\frakl_r)^{f(h)-2}\right).
    \]
    Consequently, 
    \[\frac{\sum\limits_{(\lambda_1,\ldots,\lambda_r) \in \lambda(G_{\L})} S_{(\lambda_1,\ldots,\lambda_r)}(\frakl_1, \ldots, \frakl_r)}{\# G_{\L}} = O_{A,r} \left(\frac{1}{L^r}\right).\]
    Inserting this upper bound and writing $N(\frakl_i)\leq 2L$ in~\eqref{eq:sum-delta} ends the proof.
\end{proof}

\subsection{The dominant term} \label{subsec:dominantTerm}
When $k$ is even, the dominant term of $U_k$ corresponds to the contribution of elements of $\mathcal{Q}_{k,0}'$. We begin by estimating the size of this set. For a positive integer $\nu$, we recall that $M_{2\nu}$ is the moment of order $2 \nu$ of the standard Gaussian distribution. Its value is $$M_{2\nu} = (2\nu-1) \cdot (2\nu-3) \cdots 3 \cdot 1.$$

\begin{lemma} \label{countingQ0}
    Let $\nu$ be a positive integer. Then,
    \begin{align*}\# \mathcal{Q}_{2\nu,0}' 
    & = M_{2\nu} \frac{L^{\nu}}{\log(L)^{\nu}} + O_\nu \left( \frac{L^{\nu-1}}{\log(L)^{\nu-1}} \right), \quad \text{and}\\
    \#(\mathcal{Q}_{2\nu,0} - \mathcal{Q}_{2\nu,0}')
    & =  O_\nu \left( \frac{L^{\nu-1}}{\log(L)^{\nu-1}} \right).
    \end{align*}
\end{lemma}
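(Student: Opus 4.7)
The plan is to reduce both counts to elementary combinatorics on multisets of size $2\nu$ drawn from $\mathcal{P}_K(L,2L)$, and then translate using the Landau prime ideal theorem already recalled in \S\ref{subsec:setup}, which yields $N := \#\mathcal{P}_K(L,2L) \sim L/\log(L)$.

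For $\mathcal{Q}_{2\nu,0}'$, a tuple $(\frakl_1,\ldots,\frakl_{2\nu})$ lies in this set exactly when its underlying multiset consists of $\nu$ distinct primes each with multiplicity $2$. Such a tuple is specified by (i) the $\nu$-subset $S\subset \mathcal{P}_K(L,2L)$ of primes involved, contributing $\binom{N}{\nu}$ choices, and (ii) a $2$-to-$1$ surjection $\{1,\ldots,2\nu\}\to S$, contributing $(2\nu)!/(2!)^\nu$ choices. Hence
\[
\#\mathcal{Q}_{2\nu,0}' \;=\; \binom{N}{\nu}\cdot \frac{(2\nu)!}{2^\nu} \;=\; \frac{(2\nu)!}{\nu!\,2^\nu}\, N^\nu + O_\nu(N^{\nu-1}) \;=\; m_{2\nu}\,N^\nu + O_\nu(N^{\nu-1}),
\]
using $\binom{N}{\nu} = N^\nu/\nu! + O_\nu(N^{\nu-1})$ and the classical identity $(2\nu)!/(2^\nu\nu!) = (2\nu-1)!! = m_{2\nu}$. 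Substituting the Landau asymptotic $N = L/\log(L)+O(L/\log(L)^2)$ then yields the first formula of the lemma.

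For $\mathcal{Q}_{2\nu,0} \setminus \mathcal{Q}_{2\nu,0}'$, a tuple in $\mathcal{Q}_{2\nu,0}$ has every prime appearing with even multiplicity (since $\frakl_1\cdots\frakl_{2\nu}=\fraka^2$); if it is not in $\mathcal{Q}_{2\nu,0}'$, at least one multiplicity is $\geq 4$. Thus the number $r$ of distinct primes involved satisfies $r \leq \nu-1$. Grouping by the even multiplicity profile $(2c_1,\ldots,2c_r)$ with $\sum_i c_i = \nu$ and $\max_i c_i \geq 2$---finitely many profiles, depending only on $\nu$---each contributes at most $\binom{N}{r}\cdot (2\nu)!/\prod_i(2c_i)! = O_\nu(N^{\nu-1})$ tuples. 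Summing over profiles yields $\#(\mathcal{Q}_{2\nu,0}\setminus\mathcal{Q}_{2\nu,0}') = O_\nu(N^{\nu-1}) = O_\nu(L^{\nu-1}/\log(L)^{\nu-1})$.

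There is no substantive obstacle: the argument is pure combinatorics, and the only conceptual point worth flagging is recognizing $m_{2\nu} = (2\nu-1)!!$ as the number of perfect matchings of $\{1,\ldots,2\nu\}$---precisely the Wick-type combinatorics responsible for the Gaussian moments appearing in the proof of the main theorem.
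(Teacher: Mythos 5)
Your proof is correct and takes essentially the same approach as the paper: parametrize the tuples by the underlying $\nu$-subset of primes together with a $2$-to-$1$ surjection (equivalently, the paper's $\mathcal{A}_\nu \times \mathcal{B}_L^\nu$), deduce $\#\mathcal{Q}_{2\nu,0}' = \binom{N}{\nu}\,(2\nu)!/2^\nu = m_{2\nu}N^\nu + O_\nu(N^{\nu-1})$, and bound the complement by observing that it involves at most $\nu-1$ distinct primes. One small slip worth noting: for a multiplicity profile $(2c_1,\ldots,2c_r)$, the exact count is $\binom{N}{r}\cdot\bigl(r!/\prod_j(\#\{i:c_i=j\})!\bigr)\cdot(2\nu)!/\prod_i(2c_i)!$, so your formula omits the assignment factor and is not literally an upper bound; since this missing factor is $O_\nu(1)$ and there are finitely many profiles, the conclusion $O_\nu(N^{\nu-1})$ is unaffected.
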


\begin{proof}

For $n \in \{1,\ldots,\nu\}$, let $\mathcal{A}_n$ be the set of tuples $(A_1,\ldots,A_n)$ of disjoint subsets of $\{1,\ldots,2\nu \}$ such that:
\begin{itemize}
    \item for every $i \in \{1,\ldots,n\}$, $A_i \ne \emptyset,$
    \item for every $ i \in \{1,\ldots,n\}$, $\# A_i$ is even,
    \item $\bigsqcup\limits_{i=1}^n A_i = \{1,\ldots,2\nu\}.$
\end{itemize}
We equip $\mathcal{P}_K(L,2L)$ with an arbitrary total order $<$. We also define $\mathcal{B}_L^n$ to be the set of ordered $n$-tuples of distinct prime ideals of $\mathcal{P}_K(L,2L)$. Let $s = (\frakl_1,\ldots,\frakl_{2\nu})$ be an element of $\mathcal{Q}_{2\nu,0}$ such that $\lcm(\frakl_1,\ldots,\frakl_{2\nu})$ has $n$ distinct prime factors, and $\frakl_1 ' < \ldots < \frakl_n '$ be the primes such that $$\{\frakl_1,...\frakl_{2\nu}\} = \{ \frakl_1',\ldots,\frakl_n ' \}.$$  Then, we define $b_s = (\frakl_1',\ldots,\frakl_n')$. For $j \in \{1,\ldots,n\}$, we set $$A_j ^s = \{i \in \{1,\ldots,2\nu\} \; : \; \frakl_i = \frakl_j' \}$$ and $a_s = (A_1 ^s,\ldots,A_n ^s).$ 

With this notation, the set $\mathcal{Q}_{2\nu,0}$ is in one-to-one correspondence with $$\bigsqcup_{1\leq n \leq \nu} \mathcal{A}_n \times \mathcal{B}_L ^n$$ via $s \mapsto (a_s,b_s),$ and $\mathcal{Q}_{2\nu,0}'$ is in one-to-one correspondence with $\mathcal{A}_\nu \times \mathcal{B}_L ^\nu.$  
 
If $n$ is fixed, we have 
\[
 \#\mathcal{B}_L ^n = \binom{\# \mathcal{P}_K(L,2L)}{n} \sim \frac{L^n}{n!\log(L)^n}
 \]
 as $L$ goes to infinity. For $n = \nu$, we have 
 \[
 \#\mathcal{A}_{\nu} = \binom{2 \nu}{2} \cdot \binom{2\nu-2}{2} \cdots \binom{2}{2} =\nu ! \cdot M_{2\nu},
 \]
 so $\#(\mathcal{A}_{\nu} \times \mathcal{B}_L ^{\nu}) \sim M_{2\nu} \frac{L^{\nu}}{\log(L)^{\nu}}.$ On the other hand, for $n\leq \nu-1$, we have 
 \[
 \# \mathcal{B}_{L}^n = O \left( \frac{L^{\nu-1}}{\log(L)^{\nu-1}} \right).
 \]
 Since $\#\mathcal{A}_{n}$ is a constant independent of $L$, we obtain
 \[
 \sum\limits_{n=1}^{\nu-1} \# \mathcal{A}_n \cdot \# \mathcal{B}_L ^n = O_\nu \left( \frac{L^{\nu-1}}{\log(L)^{\nu-1}} \right). \qedhere
 \] 
\end{proof}

We are able to prove a more precise statement than \cref{prop:sumLiFixed} for elements of $\mathcal{Q}_{2\nu,0}'.$

\begin{prop} \label{prop:dominantSum}
    Let $(\frakl_1,\ldots,\frakl_{2\nu}) \in \mathcal{Q}_{2\nu,0}'$. Then, 
    $$\sum\limits_{\frakp \in \mathcal{P}_F(P,2P)} \delta_{\frakp,\frakl_1 \cdots \frakl_{2\nu}} = (\alpha_h (1-\alpha_h))^{\nu} \frac{P}{\log(P)} + O_{A,\nu}\left( \frac{P}{\log(P)L} + L^{f(h)\nu} P^{1/2} \log(P)\right).$$
\end{prop}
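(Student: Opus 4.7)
The plan is to adapt the proof of \cref{prop:sumLiFixed} while exploiting the special structure of tuples in $\mathcal{Q}'_{2\nu,0}$, which gives us squared $\delta$ factors and hence a cleaner main term.

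Since $(\frakl_1,\ldots,\frakl_{2\nu})\in \mathcal{Q}'_{2\nu,0}$, the underlying primes group into $\nu$ disjoint pairs; denote the distinct primes by $\frakl_1',\ldots,\frakl_\nu'$ and set $\L' = \frakl_1'\cdots\frakl_\nu'$. From the definition of $\delta_{\frakp,\frakl}$ we have $\delta_{\frakp,\frakl_i'}^2 \in \{(1-\alpha_h)^2,\alpha_h^2\}$, and by multiplicativity
\[
\delta_{\frakp,\frakl_1\cdots\frakl_{2\nu}} = \prod_{i=1}^\nu \delta_{\frakp,\frakl_i'}^2.
\]
I would then apply \cref{lem:cebotarev} to $\L'$ exactly as in the proof of \cref{prop:sumLiFixed}, sorting primes $\frakp$ according to the Elkies signs $(\eps_1,\ldots,\eps_\nu)$, but with the weights $(1-\alpha_h)^2$ or $\alpha_h^2$ replacing the signed weights $(1-\alpha_h)$ or $-\alpha_h$. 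This produces, after factoring the sum over multipliers, the identity
\[
\sum_{\frakp\in \mathcal{P}_F(P,2P)}\delta_{\frakp,\L'^2} = \frac{\sum_{(\lambda_i)\in \lambda(G_{\L'})}\prod_{i=1}^\nu \widetilde{S}_{\lambda_i}(\frakl_i')}{\#G_{\L'}}\cdot\frac{P}{\log P} + O_{A,\nu}\bigl(L^{f(h)\nu}P^{1/2}\log P\bigr),
\]
where
\[
\widetilde{S}_{\lambda_i}(\frakl_i') := (1-\alpha_h)^2 C^1_{\frakl_i'}(\lambda_i) + \alpha_h^2 C^{-1}_{\frakl_i'}(\lambda_i).
\]

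The core computation is then a purely algebraic one. By \cref{prop:sizeSplit} we have $C^1_{\frakl_i'}(\lambda_i) = \alpha_h N(\frakl_i')^{f(h)-1} + O_h(N(\frakl_i')^{f(h)-2})$, and the large image hypothesis gives $C^1_{\frakl_i'}(\lambda_i) + C^{-1}_{\frakl_i'}(\lambda_i) = \#\Sp_{2h}(\O/\frakl_i'\O) = N(\frakl_i')^{f(h)-1} + O(N(\frakl_i')^{f(h)-2})$, whence $C^{-1}_{\frakl_i'}(\lambda_i) = (1-\alpha_h)N(\frakl_i')^{f(h)-1} + O_h(N(\frakl_i')^{f(h)-2})$. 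Plugging these in and using the elementary identity $(1-\alpha_h)^2\alpha_h + \alpha_h^2(1-\alpha_h) = \alpha_h(1-\alpha_h)$, one obtains
\[
\widetilde{S}_{\lambda_i}(\frakl_i') = \alpha_h(1-\alpha_h)N(\frakl_i')^{f(h)-1} + O_h(N(\frakl_i')^{f(h)-2}).
\]
This identity is what forces the main term to match $(\alpha_h(1-\alpha_h))^\nu$, and is really the only new ingredient compared to \cref{prop:sumLiFixed}.

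Finally, the ratio of the sum over $(\lambda_i)$ to $\#G_{\L'}$ telescopes: the factor $\#\lambda(G_{\L'})$ (independent of the $\lambda_i$'s) cancels against the corresponding factor in $\#G_{\L'} = \#\lambda(G_{\L'})\prod_i\#\Sp_{2h}(\O/\frakl_i'\O)$, leaving
\[
\frac{\sum_{(\lambda_i)}\prod_i\widetilde{S}_{\lambda_i}(\frakl_i')}{\#G_{\L'}} = (\alpha_h(1-\alpha_h))^\nu + O_{A,\nu}\!\left(\tfrac{1}{L}\right),
\]
where the error absorbs the $O_h(N(\frakl_i')^{f(h)-2})$ terms using $N(\frakl_i')\geq L$. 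Inserting this into the \v{C}ebotarev expansion and using $N(\frakl_i')\leq 2L$ in the error term yields precisely the claimed estimate. I do not foresee a serious obstacle beyond the careful arithmetic of the error terms; the main conceptual point is simply to recognize that squaring $\delta$ replaces the delicate cancellation in \cref{prop:sumLiFixed} by the robust main term $\alpha_h(1-\alpha_h)$.
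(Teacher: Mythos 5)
Your proposal is correct and takes essentially the same approach as the paper: both apply the \v{C}ebotarev density theorem to $G_{\mathcal L'}$ where $\mathcal L' = \frakl_1'\cdots\frakl_\nu'$, weight the conjugacy classes by the squared $\delta$-factors, and reduce the main term to the algebraic identity $(1-\alpha_h)^2\alpha_h + \alpha_h^2(1-\alpha_h) = \alpha_h(1-\alpha_h)$ together with \cref{prop:sizeSplit}. The only difference is organizational: the paper sorts over the $2^\nu$ Elkies/non-Elkies sign patterns and collapses the resulting sum via the binomial theorem, whereas you package the same computation into a per-prime local factor $\widetilde S_{\lambda_i}(\frakl_i')$ and evaluate it once, which is a slightly more streamlined way to express the identical argument.
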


\begin{proof}
    Assume that $\{ \frakl_1 , \ldots , \frakl_{2 \nu} \} = \{ \frakl_1 ' , \ldots, \frakl_{\nu}'\}$ where $\frakl_1' < \ldots < \frakl_{\nu}'.$ Given $(\gamma_1,\ldots,\gamma_{\nu})$ in $ \{ 1-\alpha_h,-\alpha_h\}^\nu$, denote by $\mathcal{D}_{\frakl_1',\ldots, \frakl_{\nu}'}(\gamma_1,\ldots,\gamma_\nu)$ the set of primes $\frakp \in \mathcal{P}_F(P,2P)$ such that for every $i$, the prime $\frakl_i '$ is Elkies for $A_\frakp$ if $\gamma_i = 1-\alpha_h $, and $\frakl_i ' $ is not Elkies for $A_\frakp$ if $\gamma_i = -\alpha_h.$ As in \cref{lem:cebotarev}, the \v{C}ebotarev density theorem yields:
\begin{align*}
    \#\mathcal{D}_{\frakl_1',\ldots, \frakl_{\nu}'}(\gamma_1,\ldots,\gamma_\nu) & = \frac{\sum\limits_{(\lambda_1, \ldots, \lambda_\nu) \in \lambda(G_{\frakl_1' \cdots \frakl_\nu '})} \prod\limits_{i=1}^\nu C_{\frakl_i '}^{\eps_i}(\lambda_i) }{\#G_{\frakl_1' \cdots \frakl_\nu'}} \frac{P}{\log(P)} + O_{A,\nu}\left( L^{\nu f(h)} P^{1/2} \log(P) \right) \\
    & = (\alpha_h)^{k_1} (1-\alpha_h)^{\nu-k_1} \frac{P}{\log(P)} + O_{A,\nu}\left(\frac{P}{\log(P) L} + L^{\nu f(h)} P^{1/2} \log(P) \right) \\ \end{align*}
    where $k_1$ is the number of entries $\gamma_i$ equal to $1-\alpha_h.$ Then,
    \begin{align*}
     \sum\limits_{\frakp \in \mathcal{P}_F(P,2P)} \delta_{\frakp,\frakl_1 \cdots \frakl_{2\nu}} 
     &= \sum\limits_{\substack{(\gamma_1,\ldots, \gamma_\nu) \\ \in \{ 1-\alpha_h,-\alpha_h \}^\nu }} (1-\alpha_h)^{2 k_1} (\alpha_h)^{2(\nu-k_1)} \cdot \# D_{\frakl_1',\ldots, \frakl_{\nu}'}(\gamma_1,\ldots,\gamma_\nu) \\
      &= \sum\limits_{k_1=0}^{\nu} \binom{\nu}{k_1} \left( 1-\alpha_h \right)^{2k_1} \left( \alpha_h \right)^{2(\nu-k_1)} \left( \alpha_h \right)^{k_1} \left( 1-\alpha_h \right)^{\nu-k_1} \frac{P}{\log(P)} \\ 
     &\qquad + O_{A,\nu}\left( \frac{P}{\log(P) \cdot L} + L^{f(h)\nu} P^{1/2} \log(P) \right) \\
      &= (\alpha_h  (1-\alpha_h))^\nu \frac{P}{\log(P)} + O_{A,\nu}\left( \frac{P}{\log(P) \cdot L} + L^{f(h)\nu} P^{1/2} \log(P)\right). \qedhere
\end{align*}
\end{proof}

\subsection{Conclusion of the proof} \label{subsec:conclusion}

We go back to estimating the moments of~$X_{P,L}$.
First, assume that $k$ is odd, and write $k= 2 \nu +1$. Then
\[
U_k = \sum\limits_{j=0}^{\nu} \sum\limits_{ \substack{(\frakl_1,\ldots,\frakl_{2 \nu +1}) \\ \in \mathcal{Q}_{2\nu+1,2j+1}}} \sum\limits_{\frakp \in \mathcal{P}_F(P,2P)} \delta_{\frakp,\frakl_1 \ldots \frakl_{2 \nu+1}}.
\]
For $j\in \{0,\ldots,\nu\}$, we have 
\[
\#\mathcal{Q}_{2\nu+1,2j+1} = O_\nu \left( \frac{L^{\nu+j+1}}{\log(L)^{\nu+j+1}} \right),
\]
so by Proposition \ref{prop:sumLiFixed},  
\begin{align*}
&\sum\limits_{\substack{(\frakl_1,\ldots,\frakl_{2 \nu +1}) \\ \in \mathcal{Q}_{2\nu+1,2j+1}}} \sum\limits_{\frakp \in \mathcal{P}_F(P,2P)} \delta_{\frakp, \frakl_1 \cdots \frakl_{2\nu+1}} \\
&\qquad = O_{A,\nu}\left( \frac{L^{\nu+j+1}}{\log(L)^{\nu+j+1}} \left( \frac{P}{L^{2j+1}\log(P)} + L^{f(h)(2j+1)} P^{1/2} \log(P) \right) \right).
\end{align*}
The dominant terms occur for $j=0$ and $j=\nu$. By getting rid of the non-dominant terms, we obtain 
\[
U_{k} = O_{A,k}\left( \frac{L^{\nu}P}{\log(L)^{\nu+1} \log(P)} + \frac{L^{(2 \nu +1) (f(h)+1)}P^{1/2} \log(P)}{\log(L)^{2\nu+1}} \right).
\]

We finally plug this upper bound into the expression for $\mathbb{E}(X_{P,L}^k)$ in~§\ref{subsec:setup}. We have
\[
\sigma^k \cdot \# \mathcal{P}_F(P,2P) \underset{P,L \to + \infty }{\sim} (\alpha_h (1-\alpha_h))^{\nu+1/2}\frac{P}{\log(P)} \cdot \frac{L^{\nu+1/2}}{\log(L)^{\nu + 1/2}}.\]
Hence, \begin{align*} 
\mathbb{E}(X_{P,L}^k) & = \frac{U_k}{\# \mathcal{P}_F(P,2P) \sigma^k} \\
& = O_{A,k} \left( \frac{1}{L^{1/2}\log(L)^{1/2} } + \frac{L^{(2\nu+1)(f(h)+1)-\nu-1/2}  \log(P)^2}{\log(L)^{\nu +1/2} P^{1/2} } \right),
\end{align*}
proving \cref{thm:main-moments} for odd~$k$.

Second, assume that $k = 2 \nu$ is even. We also write 
\[
U_{2 \nu } = \sum\limits_{j=0}^{\nu} \sum\limits_{(\frakl_1,\ldots,\frakl_{2 \nu}) \in \mathcal{Q}_{2\nu,2j}} \sum\limits_{\frakp \in \mathcal{P}_F(P,2P)} \delta_{\frakp,\frakl_1 \cdots \frakl_{2\nu}}.
\]
For $j \in \{1,\ldots,\nu\}$, we obtain as above
\[
\sum\limits_{(\frakl_1,\ldots,\frakl_{2 \nu }) \in \mathcal{Q}_{2\nu,2j}} \sum\limits_{\frakp \in \mathcal{P}_F(P,2P)} \delta_{\frakp,\frakl_1 \cdots \frakl_{2\nu}} = O_{A,\nu}\left( \frac{L^{\nu+j}}{\log(L)^{\nu+j}} \left( \frac{P}{L^{2j}\log(P)} + L^{2f(h)j} P^{1/2} \log(P) \right) \right).
\]

Now assume that $j=0$. By \cref{countingQ0} and \cref{prop:dominantSum}, the contribution of elements of $\mathcal{Q}_{2\nu,0}'$ to $U_k$ is
\begin{align*} & M_{2\nu} \frac{L^{\nu}}{\log(L)^{\nu}}(\alpha_h \cdot (1-\alpha_h))^{\nu} \frac{P}{\log(P)} \\ 
& \quad + O_{A,\nu}\left( \frac{P}{\log(P) L} + \frac{P L^{\nu-1}}{\log(P) \log(L)^{\nu-1}} + \frac{L^{(f(h)+1)\nu} P^{1/2} \log(P) }{\log(L)^\nu}  \right) 
\end{align*}
while the contribution of elements from $\mathcal{Q}_{2\nu,0} - \mathcal{Q}_{2\nu,0}'$ is
\[
O_\nu \left( \frac{P \log(L)^{\nu-1}}{\log(P) \log(L)^{\nu-1}} \right).
\]

The dominant terms in the above upper bounds occur for $j=0$ and $j=\nu$, and we have 
\begin{align*}
    U_{2\nu}  & = M_{2\nu}\frac{L^{\nu}}{\log(L)^{\nu}}\left( \alpha_h \cdot (1-\alpha_h) \right)^{\nu} \frac{P}{\log(P)}\\ & \quad + O_{A,\nu} \left( \frac{P}{\log(P)} \frac{L^{\nu-1}}{\log(L)^{\nu-1}} + \frac{L^{(2f(h)+2)\nu} P^{1/2} \log(P)}{\log(L)^{2\nu}} \right).
\end{align*} 
Therefore,
\begin{align*}
    \mathbb{E}(X_{P,L}^k) = M_k + O_{A,k}\left( \frac{\log(L)}{L} + \frac{L^{(2f(h)+1)\nu} \log(P)^2 }{\log(L)^{\nu} P^{1/2} } \right).
\end{align*}
This concludes the proof of \cref{thm:main-moments}; \cref{thm:main-cv} is a consequence of this theorem and \cite[Theorem 30.2]{billingsley95}.

\section{Numerical experiments}
\label{sec:experiments}

At the beginning of this project, we performed numerical experiments with \textit{SageMath} \cite{sagemath} in order to confirm experimentally the estimate of \cite[Theorem 1]{shparlinski15}. All the experiments presented in this section were made with the non-CM elliptic curve $E$ given by the Weierstrass equation $y^2 + y = x^3 - x^2$ defined over $\Q$ (Cremona label 11a3). We began by computing some values of the left-hand side of \cite[Theorem 1]{shparlinski15} for $\nu =1$, namely
$$\frac{1}{\pi(2P)-\pi(P)} \sum\limits_{p \in \Prime_\Q(P,2P)} \left( N_e(p,L) - \frac{\pi(2L)-\pi(L)}{2} \right)^2,$$
by fixing one of the variable $L$ or $P$ and by letting the other one vary. \cref{fig:momentLFixed} shows the evolution of the left-hand side for three values of $L$ (namely 25, 100 and 250) and $P$ varying between $10^3$ and $5 \cdot 10^6$.

\begin{figure}[ht]
    \centering
    \includegraphics[width=0.7\linewidth]{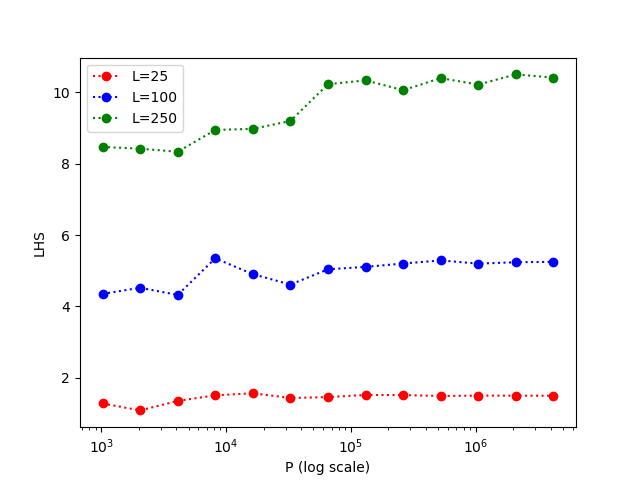}
    \caption{Moment of order $2$ for $P \in [10^3, 5 \cdot 10^6]$}
    \label{fig:momentLFixed}
\end{figure}

This graph suggests that the left-hand side has a finite limit (which depends on $L$) as~$P$ goes to infinity. To go further, we analyzed the distribution of~$N_e(p,L)$, i.e.~the number of primes $p \in \mathcal{P}_\Q(P,2P)$ such that $N_e(p,L) = n$ as $n$ varies between $0$ and $\pi(2L)-\pi(L)+1.$ We observed that this distribution has a Gaussian shape when $P$ is much larger than $L$ as in \cref{fig:distribution}.

We then tried to predict the mean value and the standard deviation as a function of $L$ through a naive probabilistic model, relying on the the standard hypothesis that roughly $50\%$ of prime numbers are Elkies. (Indeed, for a given elliptic curve $E$ defined $\F_q$ of trace of Frobenius $t$, the prime $\ell$ is Elkies if and only if $t^2-4q$ is a square modulo $\ell$, and half of the elements of $\F_\ell^\times$ are squares; we neglect the probability that $t^2-4q$ is 0 modulo~$\ell$.) In other words, for every $p \in \mathcal{P}_\Q(P,2P)$, a prime $\ell \in [L,2L]$ has a probability $1/2$ to be Elkies for the reduced curve~$E_p$, and those events are independent. Then, the number of Elkies primes for $E_p$ in $[L,2L]$ follows a binomial distribution $\mathcal{B}(\pi(2L)-\pi(L),1/2)$, whose expected value~$\mu$ and deviation~$\sigma$ are
\[
\mu = \frac{\pi(2L)-\pi(L)}{2} \quad\text{and}\quad \sigma = \frac{\sqrt{\pi(2L)-\pi(L)}}{2}.
\]
Therefore, when $P$ is much larger than $L$, we expect the actual distribution of Elkies primes to look like a Gaussian function with those parameters. In \cref{fig:distribution}, we plot the distribution for $L=250$ and $P=10^7$ in blue and the associated Gaussian red; we see that the naive model fits very well with the reality.

\begin{figure}[ht]
    \centering
    \includegraphics[width=0.7\linewidth]{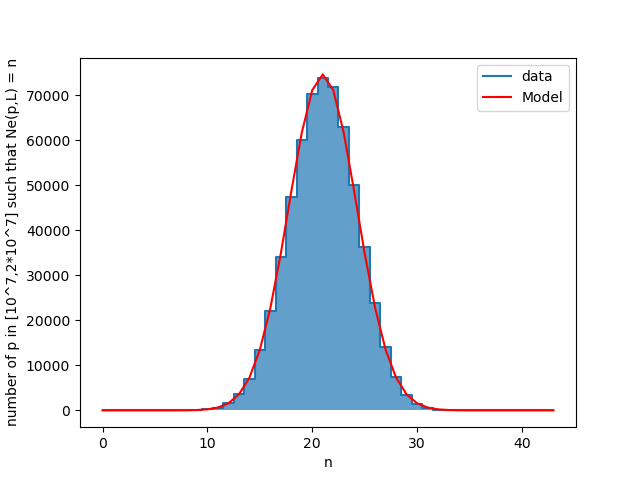}
    \caption{Distribution with $L=250$ and $P=10^7$}
    \label{fig:distribution}
\end{figure}

The predicted value of the left-hand side of \cite[Theorem 1]{shparlinski15} for $\nu=1$ is the moment of order $2$ of the binomial distribution $\mathcal{B}(\pi(2L)-\pi(L),1/2)$, which is $(\pi(2L)-\pi(L))/4$. In \cref{fig:evolutionMoment}, we fix $P = 10^5$ and we let $L$ vary in $[20,500]$. We plot the evolution of the left-hand side for in blue and the predicted value in red. We see that the model is accurate for small values of $L$, but when $L$ is larger than $\sqrt{P}$, a gap between the model and the reality starts appearing.

\begin{figure}[ht]
    \centering
    \includegraphics[width=0.7\linewidth]{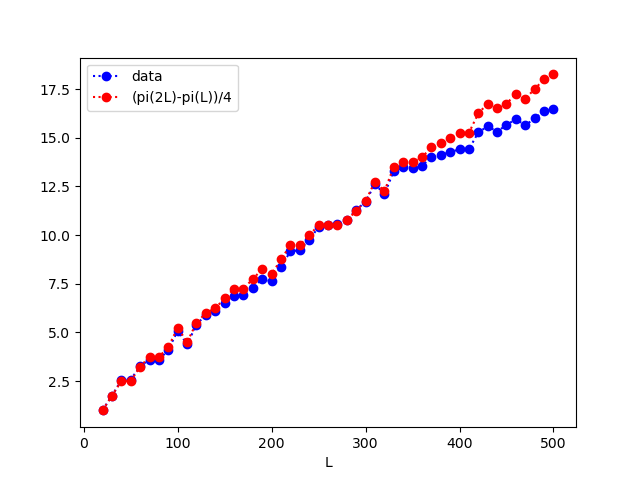}
    \caption{Evolution of the moment of order 2, with $P = 10^5$ and $L \in [20,500]$}
    \label{fig:evolutionMoment}
\end{figure}

All in all, these numerical experiments gave us the idea that the distribution of Elkies primes converges to a Gaussian function when $P$ and $L$ go to infinity with $P$ growing quickly compared with $L$. The naive model allowed us to predict the parameters of this Gaussian function in the setting of elliptic curves, setting us on the path towards \cref{thm:main-cv}.

%\printbibitembibliography

\end{document}